\begin{document}
\title{Counting Hyperbolic Components}
\author{Jan Kiwi and Mary Rees}
\thanks{The idea for this paper first developed during a visit  to Liverpool by the first author in September 2006, partially funded by the LMS Scheme 4. The importance of this support is gratefully acknowledged. The first author is partially supported by the Research Network on Low Dimensional Dynamics ACT-17, Conicyt, Chile.}

\maketitle

\begin{abstract} We give formulae for the numbers of type II and type IV hyperbolic components in the space of quadratic rational maps, for all fixed periods of attractive cycles.
\end{abstract} 

\newtheorem{theorem}{Theorem}[section]
\newtheorem{introtheorem}{Theorem}
\newtheorem{introcorollary}[introtheorem]{Corollary}
\newtheorem{corollary}[theorem]{Corollary}
\newtheorem{conjecture}[theorem]{Conjecture}
\newtheorem{lemma}[theorem]{Lemma}
\newtheorem{definition}[theorem]{Definition}
\newtheorem{proposition}[theorem]{Proposition}
\newtheorem{remark}[theorem]{Remark}
\newtheorem{notation}[theorem]{Notation}

\numberwithin{equation}{section}

\newenvironment{ulemma}{\par\noindent\textbf{Lemma}\,\,\em}{\rm}
\newenvironment{utheorem}{\par\noindent\textbf{Theorem}\,\,\em}{\rm}
\newenvironment{uremark}{\par\noindent\textbf{Remark}\,\,}{\rm}
\newenvironment{udefinition}{\par\noindent\textbf{Definition}\,\,}{\rm}

\setcounter{equation}{0} 


\def\A{\mathbb{A}}
\def\B{\mathbb{B}}
\def\CC{\mathbb{C}}
\def\C{{\mathbb{C}}}
\def\F{\mathbb{F}}
\def\H{\mathbb{H}}
\def\R{\mathbb{R}}
\def\P{\mathbb{P}}
\def\Q{\mathbb{Q}}
\def\N{\mathbb{N}}
\def\L{\mathbb{L}}
\def\Z{\mathbb{Z}}
\def\D{\mathbb{D}}
\def\S{\mathbb{S}}
\def\T{\mathbb{T}}
\def\V{\mathbb{V}}

\def\QS{\Q / \Z}
\def\RZ{\R / \Z}

\def\Ctwo{\mathbb{C}^2}
\def\CPtwo{{\mathbb{CP}^2}}
\def\CPone{{\mathbb{CP}^1}}

\def\Nnot{\mathbb{N}_0}
\def\CDC{\CC \setminus \overline{\D}}

\def\ratdc{{\operatorname{Rat}^d_\C}}
\def\mtwocm{{{\cM}_2^{\mathrm{cm}}}}
\def\mtwofm{{{\cM}_2^{\mathrm{tm}}}}
\def\mtwofp{{{\cM}_2^{\mathrm{fm}}}}

\def\mindeg{\mathrm{mindeg}}
\def\mod{\mathrm{mod}}
\def\gcd{\operatorname{gcd}}
\def\cA{{\mathcal{A}}}
\def\cB{{\mathcal{B}}}
\def\cC{{\mathcal{C}}}
\def\cD{{\mathcal{D}}}
\def\cE{{\mathcal{E}}}
\def\cF{{\mathcal{F}}}
\def\cG{{\mathcal{G}}}
\def\cH{{\mathcal{H}}}
\def\cI{{\mathcal{I}}}
\def\cJ{{\mathcal{J}}}
\def\cK{{\mathcal{K}}}
\def\cL{{\mathcal{L}}}
\def\cM{{\mathcal{M}}}
\def\cN{{\mathcal{N}}}
\def\cO{{\mathcal{O}}}
\def\cP{{\mathcal{P}}}
\def\cQ{{\mathcal{Q}}}
\def\cR{{\mathcal{R}}}
\def\cS{{\mathcal{S}}}
\def\cT{{\mathcal{T}}}
\def\cU{{\mathcal{U}}}
\def\cV{{\mathcal{V}}}
\def\cW{{\mathcal{W}}}
\def\cX{{\mathcal{X}}}
\def\cY{{\mathcal{Y}}}
\def\cZ{{\mathcal{Z}}}

\def\ocP{{\overline{\cP}}}
\def\ocQ{{\overline{\cQ}}}
\def\ocD{{\overline{\cD}}}
\def\ocC{{\overline{\cC}}}

\setcounter{tocdepth}{2}


\section{Introduction}\label{1}

The aim of this paper is to solve some  basic counting problems which arise in the study of quadratic rational maps 
as dynamical systems acting on the Riemann Sphere.
In particular, given any  integers $n,m \ge 1$, we  compute the number of  quadratic rational maps 
such that both critical points are periodic, one of period $n$ and the other of period $m$.
A computation with a finite output is only possible (and interesting) if our counting takes place 
in an appropriate moduli
space, namely, the moduli space $\mtwocm$ formed by conjugacy classes of quadratic rational maps 
with marked critical points (see Section~\ref{ssec:statements} below).

Without neglecting the intrinsic interest that enumerative problems in
moduli spaces have, our motivation finds its origin in the study of
the open and conjecturally dense subset of $\mtwocm$ formed by
hyperbolic maps.  For short, we say that a connected component of this
subset is a hyperbolic component. Relevant dynamical features such as
number and period of attractors remain unchanged within a hyperbolic
component. In fact, the dynamics over the Julia set of maps within a
hyperbolic component is quasiconformally conjugate.  According
to~\cite{R4}, counting hyperbolic components and computing numbers as
the one mentioned above are essentially equivalent problems.

Hyperbolic maps are uniformly expanding on the Julia set and are
characterised as the maps for which all its critical points lie in the
basin of some attracting periodic orbit.  The various possible
combinatorial arrangements that the orbits of the Fatou components
containing critical points might have, give a first rough
classification of hyperbolic components.  More precisely, given a
hyperbolic component $\cH$ in $\mtwocm$, then one and exactly one of
the following holds for all maps in $\cH$~\cite{R4}:
\begin{itemize}
\item[(i)] Both critical points lie in the Fatou component of an
  attracting fixed point.

\item[(ii)] There is one periodic orbit of Fatou components and the
  critical points belong to different components of this orbit.

\item[(iii)] There is one periodic orbit of Fatou components and only
  one critical point belongs to a component of this orbit, and the other
  eventually maps into this orbit of components.

\item[(iv)] There are two periodic orbits of Fatou components, each one
  containing a critical point.
\end{itemize}
We say that $\cH$ is of type I,II, III or IV, according to which one of the statements above holds.

There is exactly one type I hyperbolic component in $\mtwocm$, it is  
formed by maps having a disconnected Julia set. The other hyperbolic components are formed by
maps having connected Julia set~\cite{R4}.

According to~\cite{R4}, each hyperbolic component $\cH \subset \mtwocm$ of
quadratic rational maps with connected Julia sets contains a unique critically marked
postcritically finite rational map, modulo conjugacy, called the
{\em centre} of the hyperbolic component. 
Thus, counting centres and counting hyperbolic components
is completely equivalent. 

A type II component $\cH$ such that its periodic orbit of Fatou components has period $n$ is centred at a map with both critical points in the same
cycle of period $n$. Similarly, a type IV component $\cH$, with the first critical point in a Fatou component of period $n$ and the second in one of period $m$, is centred at a map where the first critical point has period $n$ and the second $m$. In the first case we say that $\cH$ is a type II component of period $n$, and in the second case we say that $\cH$ is a type IV component of period $(n,m)$.

The aim of this paper is to obtain formulae for the following numbers:
\begin{eqnarray*}
  \eta_{\rm {II}} (n) & = & \# \{ \cH \subset \mtwocm \mid \cH \mbox{ is a type II component of period dividing } n \}, \\
\eta_{\rm{IV}} (n,m)  & = & \# \{ \cH \subset \mtwocm \mid \cH \mbox{ is a type IV component of period } (j,k) \mbox{ where } j | n \mbox{ and }  k | m \}.
\end{eqnarray*}

The value of $\eta_{\rm{IV}} (1,m)$ is well known.
In fact, since quadratic rational maps that fix a critical point are quadratic polynomials, modulo putting the fixed critical point at $\infty$. Thus,
 $\eta_{\rm{IV}}(1,m)$
is the number of elements of the quadratic family $Q_c(z) =z^2 +c$ for which the critical point $z=0$ is periodic of period dividing $m$. 
In the early 1980's, it was established that $\eta_{\rm{IV}}(1,m) = 2^{m-1}$,  (see~\cite[Expose XIX]{D-H1} for three different proofs of the fact that
all solutions of $Q_c^{m-1}(c)=0$ are simple).

For higher degree polynomials it is likely that the available techniques may lead to answers for analogue  counting problems. However, 
similar problems for moduli spaces of rational maps of degree $\ge3$ are open.

\subsection{Statement of the results}
\label{ssec:statements}
The elements of the {\it critically marked moduli space of quadratic rational maps} $\mtwocm$ are the conjugacy classes of triples $(f,\omega_1,\omega_2)$
where $f$ is a quadratic rational map with critical points at $\omega_1$ and $\omega_2$.
More precisely, $(f,\omega_1, \omega_2)$ and $(g, \omega'_1,\omega'_2)$ are conjugate if there exists a M\"obius transformation $\gamma$ such that $\gamma \circ f = g \circ \gamma$ and 
$\gamma (\omega_j) = \omega'_j$ for $j=1,2$. We will denote the conjugacy class of 
 $(f,\omega_1,\omega_2)$ by $[ f,\omega_1,\omega_2]$. 
The space  $\mtwocm$ is naturally identified to a complex algebraic surface with a unique
singular point at the centre of the unique type II component of period $2$, that is $[z^{-2}, 0 ,\infty]$ (see~\cite[Section~6]{M2}).



\smallskip
To state our results we will also need to introduce the numbers $\nu_q(n)$ as follows.
For $q >1$ and $n \ge 1$, let
$$ r \equiv n \, \mod  \, q,$$
such that $0 \leq r < q$. 

If $n < q$ define $\nu_q(n)=0$, otherwise let
$$\nu_q(n) = \begin{cases}
\dfrac{2^{n-1} - 2^{r-1}}{2^q -1} & \mbox{ if } q \nmid n, \\ \\
\dfrac{1}{2} + \dfrac{2^{n-1} - 2^{-1}}{2^q -1}  & \mbox{ if } q \mid n.
\end{cases}$$

In Lemma~\ref{lem:9}, we deduce from well known results about the quadratic polynomial family that
  $\nu_q(n)$ is the number of hyperbolic components of period dividing $n$ in a $p/q$-limb of the 
Mandelbrot set
(e.g. for the definition of limbs see~\cite{D-H1} or \cite{MilnorPOP}). 

Denote by $\phi(n)$ the Euler Phi function of $n$ (i.e., the number of integers $1 \leq k \leq n$ which are relatively prime to $n$).

We shall prove the following theorems.
\begin{theorem}\label{1.1}
For all  integers $m \geq n \geq 1$ we have:
\begin{equation}
\begin{split}
  \eta_{\rm IV} (n,m) = & \dfrac13 \left( 5 \cdot 2^{n+m-3}  + 2^{n-2} + 2^{m-2} \right)   - \dfrac{1}{2} \sum_{2 \leq q\leq n} \phi(q) \nu_q(n) \nu_q(m)
 \\ \\
                    &  - \eta_{\rm II}(\operatorname{gcd}(n,m)) + \dfrac16 ((-1)^n  + (-1)^m  + (-1)^{n+m}).\\                    
\end{split}
\end{equation}

In particular, given $n\geq 1$, 
$$\eta_{\rm IV} (n,m) =  \left( \dfrac{5}{3} \cdot 2^{n-3} + \dfrac{1}{12} - \dfrac14 \sum_{2 \leq q\leq n} \dfrac{\phi(q) \nu_q(n)}{2^q -1}  \right) \cdot 2^m + \varepsilon_n(m),$$
where $\varepsilon_n(m)$ is a bounded function of $m$. More precisely,
$$|\varepsilon_n(m)| \leq 2^n + 2^{2 \gcd(n,m)} .$$

\end{theorem}
For example, the asymptotic behaviour, as $m \to \infty$,
of the number of type IV hyperbolic components with one cycle of period exactly $n$ and the other of period dividing $m$, when $n \le 7$ is: 
$$\begin{array}{ll}
2^{m-1}&{\rm{\ if\ }}n=1,\\ \\
\dfrac{1}{3}\cdot 2^m+O(1)&{\rm{\ if\ }}n=2,\\ \\
\dfrac{23}{21}\cdot 2^{m}+O(1)&{\rm{\ if\ }}n=3,\\ \\ 
\dfrac{78}{35}\cdot 2^{m}+O(1)&{\rm{\ if\ }}n=4,\\ \\ 
\dfrac{6103}{1085}\cdot 2^{m}+O(1)&{\rm{\ if\ }}n=5,\\ \\
\dfrac{202371}{19530}\cdot 2^{m}+O(1)&{\rm{\ if\ }}n=6,\\ \\ 
\dfrac{29316701}{1240155}\cdot 2^m+O(1)&{\rm{\ if\ }}n=7.\end{array}$$

\begin{uremark}
  The analogue result for type III components is proved when $n=3$ in \cite{R1}, in a very simple-minded way, where it is pointed out that the type IV calculation can be done similarly. The simple-minded calculation for $n=3$ agrees with the above result.
\end{uremark}

\bigskip
It is easier to write the formula for $\eta_{\rm II}$  in terms of the number $\eta '_{\rm II}(m)$ 
of type II hyperbolic components of period exactly $m$, so that
$$\eta _{\rm II}(m)=\sum _{d\mid m}\eta '_{\rm II}(d).$$

\begin{theorem} \label{1.2} For $m\geq 3$,

\begin{equation}\label{1.2.1} \begin{array}{ll}{\displaystyle \sum _{d\mid m,d\geq 3}\dfrac{m}{d}\eta '_{\rm II}(d)}=&
\dfrac{7}{36}m2^m-\dfrac{37}{108}2^m-\dfrac{m}{4}-(-1)^m\dfrac{5}{36}m+\dfrac{1}{2}+(-1)^m\dfrac{5}{54}\\ \\
\ &{\displaystyle -\dfrac{1}{2}\sum _{3\leq q,q \le j \le m-q}\phi (q)\nu_q(j)\nu_q(m-j).}\end{array}\end{equation}
\end{theorem}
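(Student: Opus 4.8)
The strategy is to count type II hyperbolic components via a dynamical/combinatorial model and then to extract the generating identity by the same Möbius-inversion-on-divisors bookkeeping used for type IV in Theorem~\ref{1.1}. Type II components are characterized by a single periodic cycle of Fatou components of period $m>1$ carrying \emph{both} critical points in distinct components of the cycle. Such a component is determined (up to the finite ambiguity recorded by the critical marking) by a pair of internal combinatorial data: the cyclic order in which the two critical components sit along the period-$m$ cycle, together with the ``kneading'' information of how the two critical orbits interleave. Concretely I would set up a bijection between type II components of period exactly $m$ and a set of admissible combinatorial types, then stratify that set according to the rotation-number-type data indexed by $q$ (the denominators that appear in the limb structure) exactly as the $\nu_q$ terms encode for type IV.

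First I would establish the counting at the level of ``total'' quantities: produce a closed formula for the weighted sum $\sum_{d\mid m,\,d\geq 3}\tfrac{m}{d}\,\eta'_{II}(d)$, the point being that this left-hand side is the natural object that counts, with multiplicity $m/d$, the period-$m$ \emph{orbits} of the relevant combinatorial structure (each period-$d$ component contributing $m/d$ times when unrolled to length $m$). The main term $\tfrac{7}{36}m2^m - \tfrac{37}{108}2^m$ should emerge as the leading asymptotics of the number of admissible kneading pairs of length $m$ — the factor $2^m$ being the number of itineraries and the polynomial-in-$m$ factor coming from the choice of where along the cycle the second critical component lies. The lower-order terms $-\tfrac{m}{4}$, $\pm(-1)^m(\cdots)$, $+\tfrac12$ are parity corrections coming from symmetric configurations (fixed points of the critical-marking involution and of the cyclic shift), handled by a Burnside/orbit-counting argument; I would compute these by carefully enumerating the configurations invariant under the relevant symmetries, as is done for the $\tfrac16((-1)^n+(-1)^m+(-1)^{n+m})$ correction in Theorem~\ref{1.1}.

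The subtraction term $-\tfrac12\sum_{3\le q,\ q<j<m-j}\phi(q)\nu'_q(j)\nu'_q(m-j)$ is the analogue of $-\tfrac12\sum\phi(q)\nu_q(n)\nu_q(m)$ in Theorem~\ref{1.1}: it removes the overcount coming from configurations in which \emph{both} critical components lie in the same ``$p/q$ sub-wake'' of the period-$m$ cycle, i.e.\ where the combinatorics factors through a rotation of type $p/q$. Here $j$ and $m-j$ are the two ``lengths'' into which the period splits on the two sides of the $q$-cycle, $\phi(q)$ counts the possible rotation numbers $p/q$, and $\nu'_q$ is the type-II variant of $\nu_q$ (the modified definition with the shift by $q-2$ reflects that we need the two critical components to sit in \emph{distinct} components, which kills the ``diagonal'' cases and is exactly what the piecewise definition of $\nu'_q$ encodes). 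I would prove this by showing that every type II period-$m$ combinatorial type that is not ``primitive'' in this sense arises uniquely from such a pair $(j,m-j)$ together with a rotation number and a pair of type-II components of the smaller periods, then invoking the $q=2$ exclusion (type II needs $q\ge 3$ for this splitting, since the $q=2$ case is the unique period-2 component, consistent with the hypothesis $m\ge 3$ and the sum starting at $q=3$).

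\textbf{Main obstacle.} The hard part will be pinning down the exact correspondence between type II hyperbolic components and combinatorial types \emph{with the right multiplicities} — in particular getting the critical-marking factor and the symmetry corrections exactly right, since an error of a bounded function of $m$ would corrupt precisely the delicate lower-order terms that distinguish this formula from its leading asymptotics. Establishing that the correspondence is a genuine bijection (not just a finite-to-one map of controlled degree) presumably rests on results about the combinatorial rigidity of hyperbolic components — an analogue, in the $\mtwocm$ setting, of the classification of hyperbolic components by their associated dynamical data — and on carefully tracking how the involution exchanging the two critical points acts, which is where the $(-1)^m$ terms come from. A secondary technical point is justifying the exact range of summation $q<j<m-j$ (equivalently the asymmetry $j<m-j$ built into the formula, which already halves the naive double sum) and checking the boundary contributions $j=q$ and $j=m-j$ separately; these boundary cases are absorbed into the closed-form polynomial corrections, and verifying that absorption is the kind of finite check one does by hand for small $m$ and then confirms in general.
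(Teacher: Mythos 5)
Your plan diverges fundamentally from the paper's method and, more importantly, leaves every load-bearing step as an assertion to be verified later rather than carrying any of them out. The paper proves Theorem~\ref{1.2} by algebraic geometry: it fixes $j$ (the transit time from $\omega_1$ to $\omega_2$ around the period-$m$ cycle), realizes $\eta_{II}(m,j)$ as the number of transverse intersections of two explicit algebraic curves $\cP_j$ and $\cQ_{m-j}$ inside the affine chart $\cR\subset\CPtwo$, computes the degrees of their projective closures, and then applies Bezout's theorem while subtracting the intersection multiplicities at the line at infinity and at the parabolic-boundary points $[c_{p,q}:0:1]$. Summing over $j$ gives (\ref{1.2.1}), since $\sum_{d\mid m,\,d\geq 3}\frac{m}{d}\eta'_{II}(d)=\sum_{j=1}^{m-1}\eta_{II}(m,j)$. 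You propose instead a combinatorial bijection with kneading data followed by Burnside counting — a completely different route.

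The difficulty is that you never construct any of the pieces. You do not specify the set of ``admissible combinatorial types,'' do not prove the claimed bijection (in fact you flag exactly this as ``the hard part,'' and rigidity of hyperbolic components in $\mtwocm$ in the strength needed here is not an off-the-shelf fact), and do not actually derive the leading coefficients $\tfrac{7}{36}$ and $-\tfrac{37}{108}$ — you assert they ``should emerge.'' The Burnside argument for the parity terms is likewise not set up: there is no group action on a finite set defined, and the $(-1)^m$ terms in the paper arise from the explicit parities in the degree formulas for $\overline{\cP_j}$, $\overline{\cQ_{m-j}}$ and the intersection numbers at $[0:1:0]$, not from an orbit-counting lemma. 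Finally, your interpretation of the correction term is mistaken in two respects: you misread the summation constraint as $q<j<m-j$ when it is $q<j<m-q$ (which is simply the set of $j$ for which $\nu'_q(j)\nu'_q(m-j)\ne 0$, a support condition, not an ordering symmetry you need to justify); and the quantity $\nu'_q(j)\nu'_q(m-j)$ is not an overcount of ``sub-wake'' configurations — in the paper it is a local intersection multiplicity of $\overline{\cP_j}$ and $\overline{\cQ_{m-j}}$ at a degenerate boundary point $c_{p,q}$ of moduli space, computed via dynamics near a parabolic limit map (the $v$-coordinate and König linearization estimates of Section~\ref{5.7} and Lemmas~\ref{lem:2}--\ref{pro:1}). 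Here $j$ and $m-j$ are the two transit times between the marked critical points, not period lengths into which a $q$-cycle splits. As it stands, the proposal identifies the right formula to aim for and one real conceptual analogy to Theorem~\ref{1.1}, but it does not constitute a proof, and the mechanism you describe for the correction term does not match the actual source of that term.
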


Thus, for $m \le 8$, the number  $\eta _{\rm II}'(m)$ 
of type II components of period $m$ is:
$$\begin{array}{ll}
1&{\rm{\ if\ }}m=2,\\
2&{\rm{\ if\ }}m=3,\\
6&{\rm{\ if\ }}m=4,\\
20&{\rm{\ if\ }}m=5,\\
46&{\rm{\ if\ }}m=6,\\
128&{\rm{\ if\ }}m=7,\\
284&{\rm{\ if\ }}m=8.
\end{array}$$
There is a (negative) contribution from $\phi (q)\nu _q(j)\nu _q(m-j)$ only for $m\geq 6$.

\medskip
As usual in enumerative problems of geometric 
nature, the proofs of our main results rely 
on counting a larger set contained in partial compactifications
of moduli space and then subtracting off the intersections at infinity. 
Mostly, we will consider a large portion $\cR$ of $\mtwocm$ and identify it with an open
and dense subset of
the projective plane $\CPtwo$.
The centres of the hyperbolic components of types II and IV of given periods will be the intersection locus in $\cR$ of two algebraic curves. To apply Bezout's Theorem, we use well known results which show that intersections are transverse and which are also useful to compute the degrees of these curves. These well known results are concerned with parametrisations of hyperbolic components of quadratic rational maps~\cite{R4}. 
To count  the intersections of these curves at infinity (i.e., outside $\cR$ in $\CPtwo$), we rely on results from Stimson's Thesis~\cite{Sti} as well as some new results.

\medskip
The proof of Theorem~\ref{1.1} regarding type IV components is contained in Section~\ref{2}, and the proof of Theorem~\ref{1.2} regarding type II components
is contained in Section~\ref{5}. In Section~\ref{sec:preliminaries} we 
discuss some of the necessary background results.

\section{Preliminaries: hyperbolic components and periodic curves}
\label{sec:preliminaries}
Recall that we work in the moduli space $\mtwocm$ of conjugacy classes
$[f, \omega_1, \omega_2]$ where $f$ is a quadratic rational map, and
$\omega_1, \omega_2$ are its critical points.  
According to~\cite{R4} each hyperbolic component of type II or IV in
$\mtwocm$ contains a unique postcritically finite quadratic rational
map, called the {\it centre} of the hyperbolic component.
We consider the {\it periodic curves} $V_n$ (resp. $W_m$) in $\mtwocm$
where $\omega_1$ has period exactly $n$ (resp. $\omega_2$ has period
exactly $m$).
Thus, our task
is related to understanding and computing the cardinality of $V_n \cap
W_m$, since the elements of this intersection are the centres of type
II or IV hyperbolic components with a Fatou component of period $n$
containing $\omega_1$ and a Fatou component of period $m$ containing
$\omega_2$.

\subsection{Parametrisation of type II and IV components}\label{type-param}
In order to compute the cardinality of $V_n \cap W_m$ it is convenient to know that $V_n$ and $W_m$ have transversal intersections except at the singular point of $\mtwocm$. This well known result is a direct consequence of the parametrisations~\cite{R4} of type II and IV hyperbolic components, which we proceed to describe.

According to part 2. (a) of the Main Theorem in~\cite{R4} we have that multipliers  parametrise type IV components:
\begin{theorem}
  \label{thr:IV}
Denote the open unit disk in $\C$ by $\D$.
  Let $\cH \subset \mtwocm$ be a type IV hyperbolic component. Given $\mathbf{f}= [f,\omega_1,\omega_2] \in \cH$, for $i=1,2$, denote by 
$\lambda_i (\mathbf{f})$ the multiplier of the attracting periodic orbit which contains $\omega_i$ in its basin.
Then the map $\lambda: \cH \to \D \times \D$ given by $\lambda(\mathbf{f})= (\lambda_1(\mathbf{f}),\lambda_2(\mathbf{f}))$ is 
biholomorphic.
\end{theorem}

Observe that a non-empty intersection of a periodic curve $V_n$ (resp. $W_m$) with a type IV hyperbolic component $\cH$ corresponds to the disk $\lambda_1 = 0$ (resp. $\lambda_2=0$).

\smallskip
In order to parametrise type II components we will model the first return map to critical Fatou components by a pair of Blaschke products of the form
$$\beta_a (z) = z \dfrac{1- \bar{a} }{1-a} \dfrac{z-a}{1- \bar{a} z},$$
where $a \in \D = \{ z \in \C \mid |z| <1 \}$.
Note that $\beta_a (\D) =  \D$, $\beta_a(0) =0$ and $\beta_a(1) =1$.

\begin{theorem}
  \label{thr:II}
Consider a period $m \ge 3$ type II hyperbolic component $\cH \subset \mtwocm$ such that the Fatou component containing the first critical point
maps in $j < m$ iterates onto the one containing the second critical point.
Then there exists a homeomorphism $h: \cH \to \D \times \D$
such that the following holds:

If $h(\mathbf{f}) = (a,b)$ where $\mathbf{f}=[f,\omega_1, \omega_2] \in \cH$  and 
$U_1$ (resp. $U_2$) is the Fatou component of $f$ containing $\omega_1$ (resp. $\omega_2$),
then for an appropriate choice of conformal maps $h_i : \D \to U_i$ where $i=1$ and $2$ we have that
$f^j : U_1 \to U_2$ coincides with $  h_2 \circ  \beta_a \circ h_1^{-1}$ and
$f^{m-j} : U_2 \to U_1$ coincides with $ h_1 \circ  \beta_b \circ h_2^{-1}$.
\end{theorem}

Although this theorem is a consequence of the main result in~\cite{R4}, 
it is easier to  deduce the above statement from the literature with the aid
of~\cite[Theorem~9.3]{M3}. 

\begin{proof}
In~\cite{M3}, Milnor works in the moduli space $\mtwofp$ of quadratic rational maps with marked fixed points. This moduli space $\mtwofp$ is formed by conjugacy classes of 
$(f, x_1, x_2, x_3)$
where $f$ is a quadratic rational map with fixed points at $x_1, x_2, x_3$ listed with repetitions according to multiplicity. The conjugacy class
of  $(f, x_1, x_2, x_3)$ is formed by the quadruples $(\gamma \circ f \circ \gamma^{-1}, \gamma(x_1),  \gamma(x_2), \gamma(x_3))$ where $\gamma$ is a M\"obius transformation. It follows that $\mtwofp$ is a complex (affine) algebraic surface
with a singularity at the class of the map possesing a triple fixed 
point~\cite[Lemma~6.6]{M2}. 
We will also employ the totally marked moduli space $\mtwofm$
defined as the conjugacy classes of $(f, \omega_1, \omega_2,  x_1, x_2, x_3)$ 
where $\omega_1, \omega_2$ are the critical points of $f$ and the $x_i$ are the fixed points as above. This latter moduli space is a smooth 
complex algebraic surface~\cite[Lemma~6.6]{M2}.
According to Milnor~\cite[page 51]{M2}, the forgetful map
$\mtwofm \to \mtwocm$ is a degree $2$ covering ramified only over $[z^{-2}, 0, \infty]$ and $\mtwofm \to \mtwofp$ is a degree $6$ covering ramified only over
the unique singular point of $\mtwofp$.

 Given a  type II hyperbolic component $\cH'$ in $\mtwofp$,   Theorem~9.3~\cite{M3}
produces a homeomorphism $h': \cH' \to \D \times \D$ that assigns to each element 
of $\cH'$ a pair of Blaschke products $(\beta_a, \beta_b)$  
which model the first return map to critical Fatou components (as in the statement of the theorem). 
Let us now translate this result to a period $m \ge 3$ type II hyperbolic component $\cH \subset \mtwocm$. 
From~\cite{R4} we know that $\cH$ is simply connected and
from~\cite[Theorem~9.3]{M3} the same holds for hyperbolic components in $\mtwofp$. Thus, via lifting $\cH$ to $\mtwofm$,
and then projecting into $\mtwofp$, we obtain a natural biholomorphic map from $\cH$ onto a hyperbolic component in $\mtwofp$.
The postcomposition of this homeomorphism with Milnor's parametrisation gives us the desired parametrisation $h: \cH \to \D \times \D$.
\end{proof}

\begin{corollary}
  \label{cor:TransversePeriodic}
For all $n \geq 1$ and $m \geq 1$,
 any intersection of $V_n$ with $W_m$ at a non-singular point of $\mtwocm$ is transverse.
\end{corollary}

\begin{proof}
Any point $\mathbf{f}$ in  $V_n \cap W_m$ lies in a type II or IV hyperbolic component. 
In view of theorems~\ref{thr:IV} and~\ref{thr:II}, there is a homeomorphism which maps
a neighbourhood of the origin in $\C^2$ onto a neighbourhood of $\mathbf{f}$ in $\mtwocm$
such that the germ of $\{ (x,y) \in \C^2 \mid xy=0 \}$ at the origin maps onto the germ of 
 $V_n \cup W_m$ at $\mathbf{f}$. From~\cite[Theorem~1]{Sa} it follows that
the intersection of $V_n \cup W_m$ with a small $3$-sphere around $\mathbf{f}$ 
consists of a link of two unknotted curves with linking number $1$.  Thus, the intersection at $\mathbf{f}$  is transverse.
\end{proof}


We will also need a result regarding transversal intersections of other critical orbit relations.
More precisely, given $m \ge 2$ and $1 \le j < m$ we consider the curve $\cP'_j$ consisting of all
elements $[f,\omega_1,\omega_2] \in \mtwocm$ such that $f^j(\omega_1) = \omega_2$. 
Also, we let $\cQ_{m-j}'$ be the curve formed by all $[f,\omega_1,\omega_2] \in \mtwocm$ such that $f^{m-j}(\omega_2) = \omega_1$.
Clearly, any intersection point of these curves is the centre of a type II hyperbolic component of period $n$ which divides $m$. 


\begin{corollary}
  \label{cor:TransversePreperiodic}
For all  $m \geq 3$ and $1 \le j < m$,
 any intersection of $\cP_j'$ with $\cQ_{m-j}'$ at a non-singular point of $\mtwocm$ is transverse.
\end{corollary}

\begin{proof}
To prove this corollary it is more convenient to reparametrise any type II hyperbolic component 
by pairs of Blaschke products having a critical point at the origin.
With this purpose, for each $a \in \D$ let us consider 
$$\gamma_a (z) =  \dfrac{1- \bar{a} }{1-a} \dfrac{z^2-a}{1- \bar{a} z^2}.$$
Note that $\gamma_a(\D) =\D$, $\gamma'(0) =0$, and $\gamma(1)=1$. 
Since $\gamma_a$ has only one critical point  $z =0$ in $\D$ and 
$\beta_a$ has only one fixed point $z=0$ in $\D$, it is not difficult to see
that there exists a diffeomorphism $\varphi : \D \times \D \to \D \times \D$
such that if $(c,d) = \varphi(a,b)$, then there exist automorphisms $h_1,h_2$ of $\D$ such that $\gamma_c = h_2^{-1} \circ \beta_a \circ h_1$ and 
$\gamma_d = h_1^{-1} \circ \beta_b \circ h_2$.  

It follows that given a period $n \ge 3$ type II hyperbolic component $\cH$, a non-empty intersection $\cP_j' \cap \cH$ 
is mapped under $\varphi \circ h: \cH \to \D \times \D$ onto $\{ 0 \} \times \D$ where $h$ is as in Theorem~\ref{thr:II} and $\varphi$ is as in the previous paragraph. Similarly $\cQ_{m-j}' \cap \cH$ is mapped onto $\D \times \{ 0 \} $. The corollary now follows from~\cite[Theorem~1]{Sa} as in the previous proof.
\end{proof}

\subsection{Periodic curves: smoothness}\label{2.1} 
Although periodic curves are well known to be smooth, we are unable to provide a published reference for this fact. A sketch of its proof 
is included below only for  the sake of completeness, since we make no essential use of it.

\begin{theorem}
  \label{thr:smooth}
  For all $n \geq 1$ and $m \geq 1$, the curves $V_n$ and $W_m$ are smooth at non-singular points of $\mtwocm$.
\end{theorem}

\begin{proof}{(Sketch)}
  For $n,m \leq 2$ this can be checked by a direct calculation, so we assume that $n$ and $m$ are at least 
$3$. To fix ideas we prove smoothness for $V_n$, smoothness of $W_m$ follows along the same lines.
We consider $\mathbf{g} = [g, \omega_1(g), \omega_2(g)] \in V_n$ and proceed to prove that $V_n$ is
smooth in a neighbourhood $U$ of $\mathbf{g}$.

First assume that $\omega_2(g)$ is not in the inmediate basin of the period $n$ cycle containing $\omega_1(g)$.
Normalising the critical points to $0$ and $\infty$, and one of the critical values to $1$, 
it is not difficult to conclude that there exists a holomorphic section $(f_u, \omega_1(u), \omega_2(u))$ 
of quadratic rational maps 
defined for all $u$ neighbourhood $U$ of $\mathbf{g}$ (i.e. $u = [ f_u, \omega_1(u), \omega_2(u)]$ for all $u \in U$).
Taking $U$ sufficiently small we may assume that the periodic orbit of $\omega_1(u)$ for $u = \mathbf{g}$
has a well defined  analytic continuation to a periodic orbit $\cO_u$ of $f_u$ for all $u \in U$. 
Let $\lambda: U \to \C$ be the map that assigns to each $u \in U$ the multiplier $\lambda(u)$ of $\cO_u$.
Note that $\lambda (u) =0$ if and only if $u \in V_n$. 
We claim that the gradient of $\lambda$ does not vanish at $\mathbf{g}$. 
In fact, note that each component of the inmediate basin of $\cO_{\mathbf{g}}$ is simply connected. Thus, 
we may apply quasiconformal surgery, as in the 
quadratic polynomial case (see~\cite[Th\'eor\'eme 4]{D}) 
to obtain a smooth family $u(\mu) \in U$ such that
$\lambda(u(\mu)) = \mu$, defined for a parameter $\mu$ varying in a neighbourhood of $0 \in \D$ and such that
$u(0) = \mathbf{g}$. Hence, the gradient of $\lambda$ does not vanish at $\mathbf{g}$, which guarantees smoothness
of $V_n$ near $\mathbf{g}$.

\medskip
\begin{uremark}
  The original surgery described in~\cite[Th\'eor\'eme 4]{D} yields a continuous family $u(\mu)$. However, following the proof of Theorem~5.8 in 
\cite{M3} the surgery may be  upgraded to one producing a real analytic family $u(\mu)$.
\end{uremark}

\medskip
It remains to check that $V_n$ is smooth at elements where the second critical point is in the basin of the first periodic critical point. This again follows from Theorem~\ref{thr:II}, since such elements of $V_n$ are contained in
type II hyperbolic components and, after applying~\cite[Theorem~1]{Sa}, we conclude that $V_n$ intersects a small $3$-sphere around 
these points in an unknotted simple closed curve. Thus, $V_n$ is also smooth at these points.
\end{proof}

\subsection{A convenient subset $\cR$ of $\mtwocm$ and projective curves}
\label{ss:curves}
For reasons that will be apparent later, for $n, m \geq 1$, we  consider 
the curve $X_n \subset \mtwocm$ consisting on all maps such that the first critical point is periodic of period
dividing $n$, and similarly the curve $Y_m$ formed by maps with the second critical point periodic of period dividing $m$.
The curves $X_n$ and $Y_m$ are the union of periodic curves.
That is,
\begin{eqnarray*}
  X_n &=& \cup_{p|n} V_p, \\
  Y_n &=& \cup_{p|n} W_p. 
\end{eqnarray*}

  For our purpose it is convenient to work with the set $$\cR = \mtwocm \setminus X_2$$ which
may be parametrised as follows. For  $(c,d) \in \C \times \C^*$, consider the quadratic rational map
$$f_{c,d}(z) = 1 + \dfrac{c}{z} + \dfrac{d}{z^2}.$$
Then $(c,d) \mapsto [f_{c,d}, 0, -2d/c]$ parametrises  $\cR= \mtwocm \setminus X_2$.
That is we identify $\cR$ with $\C \times \C^*$.

For us, it is also convenient to regard $\cR$ as a subset of  $\CPtwo$.  
By adding the line $d=0$ and a projective line at infinity to $\cR$, 
we obtain $\CPtwo$, with a preferred affine plane $\C^2$ parametrised by $(c,d)$.
Thus, we will regard $\cR$ both as a subset of $\mtwocm$ and of $\CPtwo$ according to convenience.

In order to be precise we let $\cX_n = X_n \cap \cR \subset \CPtwo$
and $\cY_m= Y_m \cap \cR \subset \CPtwo$. Similarly, let
$\cV_n = V_n \cap \cR$ and $\cW_n = W_n \cap \cR$. Denote by $\overline{\cX_n}$,
$\overline{\cY_m}$, $\overline{\cV_n}$ and, $\overline{\cW_n}$
 their closure in $\CPtwo$.
It follows that  $\overline{\cX_n}$,
$\overline{\cY_m}$, $\overline{\cV_n}$ and, $\overline{\cW_n}$ are projective algebraic varieties.

\section{Type IV components}\label{2}
The general strategy to compute $\eta_{\rm IV} (n,m)$ involves two main steps.
The first one consists of computing the degrees of the curves $\overline{\cX_n}$ and $\overline{\cY_m}$ introduced above, in Section~\ref{ss:curves}. 
Then we apply Bezout's Theorem to obtain the total number of intersections of these curves. Since we are only interested in intersections which are relevant to our count (that is, those in $\cR$), the second main step is to compute the number of intersections at ``infinity'' and subtract them from the total number of intersections. Both steps involve translating dynamical information into algebraic information about intersections. While in the first step, the main ingredients come from results summarised in Section~\ref{sec:preliminaries} about transversal intersections, in the second step, Stimson's results (Theorem~\ref{3.4}) about how certain dynamical systems are organised close to infinity in parameter space, will play a key role.

Complementary to the main ideas described in the previous paragraph,  
the proof of  Theorem~\ref{1.1} involves rewriting the formula as follows:
\begin{equation}\label{1.1.1}
\begin{split}
  \eta_{\rm IV} (n,m) =& \dfrac{1}{36}(2^n-3-(-1)^n))(7 \cdot 2^m+3-(-1)^m)\\ \\
                   & - \dfrac{1}{2} \sum_{3\leq q\leq n} \phi(q) \nu_q(n) \nu_q(m) \\ \\ 
                   & - \dfrac12 \left(\dfrac{2^n}{6}+ \dfrac{(-1)^n}{3}\right)  \left(\dfrac{2^m}{6}+ \dfrac{(-1)^m}{3}\right) \\ \\
                   & + \dfrac12  \left(\dfrac{2^n}{6}+ \dfrac{(-1)^n}{3}\right)  \left(\dfrac{2^m}{6}+ \dfrac{(-1)^m}{3}\right) \\ \\ 
                   & +\dfrac{ (4+(-1)^n)}{6} 2^m - \dfrac{(1+(-1)^n)(-1)^m }{6}   +\dfrac{(1+(-1)^n)(1+(-1)^m)}{4} \\ \\
                   & - \eta_{\rm II}(\gcd (n,m)).
\end{split}
\end{equation}
The first two lines add up to the size of $\cX_n \cap \cY_m$, for $n \geq 3, m \geq 1$. That is, 
the total number of type II and IV components with an attracting cycle of period $\geq 3$ dividing $n$ 
and an attracting cycle of period dividing $m$ (maybe the same cycle). 
The first line is the product of the degrees of the curves $\overline{\cX_n}$ and $\overline{\cY_m}$ (see Subsection~\ref{2.2}) and
the second line is their intersection number outside $\cR$ (see Sections~\ref{3.1} and~\ref{3.2}). 

The third and fourth line cancel out. However, the third line is $- (1/2) \phi(2) \nu_2(n) \nu_2 (m)$ so that we may insert this number in the 
sum of the second line of (\ref{1.1.1}).

The fifth line corresponds to $\eta_{\rm IV}(1,m)$ if $n$ is odd, to $\eta_{\rm IV}(2,m)$ if $n$ is even and $m$ is odd, and to  $\eta_{\rm IV}(2,m) + \eta_{\rm II}(2)$
if $n$ and $m$ are even.

Thus, the sum up to the fifth line gives the number of hyperbolic components with one critical Fatou component of 
period dividing $n$ and another of period dividing $m$. 
This count includes some type II components, namely, the ones with period dividing both $n$ and $m$.
The last line is the necessary correction to not count these $\eta_{\rm II}(\gcd (n,m))$ components
and only consider type IV components.

\subsection{The degrees of the curves}\label{2.2}

\begin{lemma} 
\label{lem:6}
The following statements hold:
\begin{itemize}
\item For all $n \ge 3$, the degree of $\overline{\cX_n}$ is
$$ \frac{1}{6}2^{n}-\frac{1}{2}-\frac{1}{6}(-1)^{n}.$$
\item For all $m \ge 1$,  the degree of $\overline{\cY_m}$ is
$$\frac{7}{6}2^{m}+\frac{1}{2}- \frac{1}{6}(-1)^{m}.$$
\item None of the points $[1:0:0], [0:1:0], [0:0:1]$ belongs to $\overline{\cX_n}$, for all $n \geq 3$.
\end{itemize}
\end{lemma}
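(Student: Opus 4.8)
The plan is to compute the three pieces via explicit parametrization of the periodic curves. First I would recall that $\mathcal{R}$ is parametrized by $(c,d)\in\C\times\C^*$ via $f_{c,d}=1+c/z+d/z^2$, with $\omega_1=0$ and $\omega_2=-2d/c$. The curve $\overline{\cV_p}$ (where $\omega_1=0$ has period exactly $p$) and $\overline{\cW_p}$ (where $\omega_2$ has period exactly $p$) are defined by the vanishing of suitable ``dynatomic'' type polynomials in $(c,d)$. To find the degree of $\overline{\cX_n}=\overline{\cup_{p\mid n}\cV_p}$ one multiplies out (or adds degrees of the irreducible factors): so the key step is to determine $\deg\overline{\cV_p}$ as a plane curve. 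For $\omega_1=0$, periodicity of period dividing $n$ means $f_{c,d}^{\circ n}(0)=0$; the orbit of $0$ is $0\mapsto \infty \mapsto 1 \mapsto \cdots$ (note $f_{c,d}(0)=\infty$, $f_{c,d}(\infty)=1$), so really one is iterating from $1$ and asking that after $n-1$ more steps one returns; writing $f_{c,d}^{\circ k}(1)=P_k(c,d)/Q_k(c,d)$ and tracking the degrees of $P_k,Q_k$ in $(c,d)$ by the recursion coming from $f_{c,d}$, one gets a degree that grows like $2^k$, and careful bookkeeping yields $\deg\overline{\cX_n}=\tfrac16 2^n-\tfrac12-\tfrac16(-1)^n$ after summing the Möbius-type inclusion–exclusion over $p\mid n$. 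The same analysis for $\omega_2=-2d/c$ — whose orbit involves an extra substitution and an extra factor from clearing the denominator $c$ — gives the slightly larger degree $\tfrac76 2^m+\tfrac12-\tfrac16(-1)^m$ for $\overline{\cY_m}$.

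Concretely I would set up the recursion: if $z_k=f_{c,d}^{\circ k}(\text{starting point})$ is written as a ratio of polynomials $A_k/B_k$ in $(c,d)$, then $z_{k+1}=1+cB_k/A_k + dB_k^2/A_k^2 = (A_k^2 + cA_kB_k + dB_k^2)/A_k^2$, so $A_{k+1}=A_k^2+cA_kB_k+dB_k^2$, $B_{k+1}=A_k^2$. Thus $\deg A_{k+1}=\max(2\deg A_k, 1+\deg A_k+\deg B_k, 2+2\deg B_k)$; since $\deg B_{k+1}=2\deg A_k$, one tracks the pair and finds $\deg A_k\sim 2^{k-1}$ with an explicit formula, and the vanishing locus $\{A_{n}=0\}$ (for the return condition, suitably interpreted after removing spurious factors such as lower-period solutions and the $d=0$ components) has the stated degree. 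The arithmetic is routine once the recursion is in place; the bookkeeping of which factors are spurious (the locus where the orbit passes through a critical point, the $d=0$ line, lower periods) is where care is needed, and the inclusion–exclusion over divisors $p\mid n$ converts ``period exactly'' to ``period dividing''. The parity term $(-1)^n$ enters precisely through this divisor sum, reflecting the $2$-adic behaviour of $\sum_{p\mid n}(\text{something}\sim 2^p)$.

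For the third bullet — that none of the coordinate points $[1:0:0]$, $[0:1:0]$, $[0:0:1]$ lies on $\overline{\cX_n}$ for $n\ge 3$ — I would work in the chart at infinity. The point $[0:0:1]$ corresponds to the line $d=0$ together with the point at infinity on it in the $(c,d)$-plane, i.e. $c=\infty$, $d$ finite/zero; the points $[1:0:0]$ and $[0:1:0]$ are the two points on the line at infinity in the directions of the $c$- and $d$-axes. I would show that along each of these, the map $f_{c,d}$ degenerates: e.g. $d=0$ makes $0$ no longer critical (or makes $f$ not genuinely quadratic with the two marked critical points distinct), and similarly the directions at infinity correspond to degeneracies where the dynamical meaning of ``$\omega_1$ periodic of period $\ge 3$'' cannot be attained — equivalently, substituting the homogenized defining polynomial of $\overline{\cX_n}$ and checking its leading form does not vanish at these three points. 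This is the part I expect to be the main obstacle: it requires understanding the defining equations near the boundary $\CPtwo\setminus\mathcal{R}$ precisely enough to see non-vanishing, rather than just estimating degrees; one likely needs the explicit top-degree homogeneous part of the polynomial cutting out $\overline{\cV_p}$, or a geometric argument that the closure of the locus ``$\omega_1$ periodic'' stays away from these degenerate limits because along approaches to them the multiplier or the orbit structure blows up. Once the coordinate points are excluded, Bézout applied to $\overline{\cX_n}$ and $\overline{\cY_m}$ in the next subsections will have no hidden contributions at those points, which is why this bullet is needed.
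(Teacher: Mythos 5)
Your overall plan — write $f_{c,d}^{\circ k}$ applied to the critical orbit as a ratio $A_k/B_k$ of polynomials in $(c,d)$ via the recursion $A_{k+1}=A_k^2+cA_kB_k+dB_k^2$, $B_{k+1}=A_k^2$, and track degrees — is exactly the paper's starting point, and your idea for the third bullet (inspect the constant term and the top-degree monomials of the homogenized defining polynomial) is also the paper's. But there are two genuine gaps.

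First, and most importantly, you never address why the degree of the polynomial you write down equals the degree of the curve. The polynomial $P_n(c,d)$ with $f_{c,d}^{\circ n}(0)=P_n/Q_n$ cuts out $\overline{\cX_n}$ set-theoretically, but the degree of the projective curve is the degree of the radical ideal; if $P_n^h$ had a repeated factor the curve's degree would be strictly smaller. This is not a minor bookkeeping issue: it is the main technical content of the proof. The paper resolves it by a two-step argument: (i) use the birational involution of $\CPtwo$ that swaps the roles of the two critical points to reduce the square-freeness of the $\cY$-polynomial to that of the $\cX$-polynomial; (ii) prove $P_n^h$ is square-free by intersecting $\overline{\cX_n}$ with the degree-$3$ curve $\overline{\cY_1}$, checking the intersection lies entirely in $\cR$ and is transverse there (because $\cY_1$ is the quadratic family $z^2+v$), and then counting that intersection explicitly to see that $3\deg(\overline{\cX_n})$ already equals $3\deg(P_n^h)$. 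Without some such verification your argument only gives an upper bound on the degree of $\overline{\cX_n}$.

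Second, the divisor bookkeeping you describe is not needed and would add confusion. The condition $P_n(c,d)=0$ already encodes ``period of $0$ divides $n$'', so $\overline{\cX_n}$ is directly cut out by a single polynomial; there is no Möbius/inclusion--exclusion over $p\mid n$, nor any ``spurious lower-period factors'' to strip out. (If you were computing $\deg\overline{\cV_p}$ for exact period you would need the divisor sum, but that is unnecessary here.) Relatedly, in your degree recursion the contribution of $dB_k^2$ should have degree $1+2\deg B_k$, not $2+2\deg B_k$; a small slip but worth correcting. Finally, for the third bullet you offer a geometric ``degeneration'' argument as an alternative; the paper simply observes by induction that $P_n$ has constant term $1$ and that both $c^{\deg P_n}$ and $d^{\deg P_n}$ appear with nonzero coefficients, which is shorter and avoids the delicate boundary analysis you flag as the likely obstacle.
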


Before proving the lemma, it is convenient to introduce notation, and state and prove some intermediate results
contained in Lemma~\ref{lem:4} below. These results will also be useful when counting type II components.

To compute the degree of the curves $\overline{\cX_n}$ we recursively define, 
for $n \ge 3$, polynomials $P_n(c,d),  Q_n(c,d) \in \C[c,d]$ as follows:
$$P_{3}(c,d)=1+c+d,\ \ Q_{3}(c,d)=1,$$
$$P_{n+1}=P_{n}^{2}+cP_{n}Q_{n}+dQ_{n}^{2},\ \ Q_{n+1}=P_{n}^{2}.$$
With these definitions we have that, for all $(c,d) \in \cR$, 
$$f_{c,d}^{n}(0)=\dfrac{P_{n}(c,d)}{Q_{n}(c,d)}.$$
Thus $\cX_n$ is the set of all $(c,d) \in \cR$ such that
$P_n(c,d)=0$.

Similarly, to compute the degrees of the curves $\overline{\cY_m}$ we define, for $m \ge 1$, 
$$R_{1}(c,d)=4d-c^{2},\ \  S_{1}(c,d)=4 d,$$
$$R_{m+1}=R_{m}^{2}+cR_{m}S_{m}+dS_{m}^{2},\ \ S_{m+1}=R_{m}^{2}.$$
Note that
$$f_{c,d}^{m}(-2d/c)=\frac{R_{m}(c,d)}{S_{m}(c,d)}.$$
Thus,  $\cY_m$ is the set of all $(c,d) \in \cR$ such that
 $c R_m(c,d) + 2d S_m (c,d) =0$.

 \begin{lemma}
   \label{lem:4}
   For all $n \ge 3$ and $m \ge 1$ the following statements hold:
   \begin{enumerate}
   \item $$\deg (P_n) = \frac{1}{6}2^{n}-\frac{1}{2}-\frac{1}{6}(-1)^{n},$$
$$\deg (Q_n) = \frac{1}{6}2^{n}-{1}+\frac{1}{3}(-1)^{n}.$$
   \item For all even  $n$, $\deg (P_n) = \deg (Q_n)$. For all odd $n$, $\deg (P_n) = \deg (Q_n) +1$.
   \item The constant term of $P_n$ and of  $Q_n$ is $1$.
   \item Among the terms of $P_n$ (resp. $Q_n$) with maximal degree $\delta=\deg (P_n)$ (resp. $\delta=\deg(Q_n)$), both the
     monomial $c^{\delta}$ and the monomial $d^{\delta}$ always appear multiplied by a positive integer coefficient.
   \item The minimum degree of the monomials of $R_m$ and of $S_m$ is $2^{m -1}$ and are uniquely realised by monomials in $d$. 
   \item Let $G_m (c,d) = c R_m(c,d) + 2d S_m (c,d)$. Then
     $$ \deg (G_m) = \frac{7}{6}2^{m}+\frac{1}{2}-\frac{1}{6}(-1)^{m}.$$
\end{enumerate}
 \end{lemma}

 \begin{proof}
   Since (i) through (iv) hold for $n=3$, we proceed by induction assuming that these assertions are true for $n$. 
From the definitions,  $P_{n+1} (0,0) = P^2_n (0,0) = Q_{n+1}(0,0)$, thus (iii) holds for $n+1$, since $P_n(0,0) =1$.
When $n$ is even,  from (iii), (iv)  and the formulae $P_{n+1} = P_n^2 +cP_{n}Q_{n}+dQ_{n}^{2}$, $Q_{n+1} = P_n^2$, it follows that 
(i), (ii) and (iv) hold for $n+1$ since $\deg (P_{n+1}) = 1 + \deg (P_n) + \deg (Q_n) = 1 + 2 \deg (P_n) = 1 + \deg (Q_{n+1})$.
Similarly when $n$ is odd, (i), (ii) and (iv) also hold for $n+1$ since $\deg (P_{n+1}) = 1 + \deg (P_n) + \deg (Q_n) =  2 \deg (P_n) = \deg (Q_{n+1})$.

Assertion (v) clearly holds for $m=1$. Assuming that (v) holds for $m$, from  the definitions it follows that 
the monomials of minimum degree of $R_{m+1}$ and of $S_{m+1}$ coincide with the square of the monomial of minimum degree of $R_m$.  
Thus, (v) holds for $m+1$ and the assertion holds, by induction.

To prove (vi),  we first establish by induction the assertions that $\deg(R_m) \ge \deg(S_m)$, and for all $m \ge 2$, 
among the terms of $R_m$ (resp. $S_m$) with maximal degree $\delta=\deg (R_m)$ (resp. $\delta=\deg(S_m)$) 
the monomial $c^{\delta}$ appears multiplied by a positive integer coefficient. In fact, for $m =2$, directly from the
definitions it follows that $c^4$ is a term of maximal degree of $R_2$ and $S_2$. Assuming the assertions true for $m$,
it follows that $S_{m+1} = R_m^2$ has degree $2\deg (R_m)$ and a term $c^{\deg (S_{m+1})}$ multiplied by a positive integer.
Moreover, $\deg (R_{m+1}) = \max \{ 2\deg (R_m), 1 + \deg (R_m) + \deg (S_m)\}$ and in all cases $R_{m+1}$ has a term of maximal degree
of the form $c^\delta$ multiplied by a positive integer coefficient. We also conclude that $\deg(R_{m+1}) \ge \deg(S_{m+1})$.

From the previous paragraph, it is not difficult to conclude that
for all  $m \ge 1$ odd, $\deg (R_{m+1}) = \deg (S_{m+1}) = 2 \deg (R_m)$ and, 
for all $m \ge 1$ even, $\deg (R_{m+1}) = \deg (S_{m+1}) +1 = 2 \deg (R_m) +1$.
Also, $ \deg (G_m) = \deg(R_m) +1$, since for $\delta=\deg (R_m)$ 
the monomial $c^{\delta}$ appears multiplied by a non-zero coefficient as a term of  $R_m$.
Now an induction readily checks that $ \deg (G_m)$ is given by the formula of assertion (vi).
\end{proof}

\begin{proof}[of Lemma~\ref{lem:6}]
Denote by $P^h_n (c,d,e)$ (resp. $G^h_m (c,d,e)$) the homogeneous version
of $P_n(c,d)$ (resp. $G_m(c,d)$). 
Then $\overline{\cX_n}$ (resp. $\overline{\cY_m}$) is the projective curve 
where  $P^h_n (c,d,e)$ (resp. $G^h_m (c,d,e)$) vanishes.

From Lemma~\ref{lem:4} (iii), we conclude that $[0:0:1] \notin \overline{\cX_n}$ since $P^h_n (0,0,1)=1$.
Lemma~\ref{lem:4} (iv)  implies that $P^h_n (1,0,0) \neq 0 \neq P^h_n (0,1,0)$.
Thus, none of the points $[1:0:0], [0:1:0], [0:0:1]$ belongs to $\overline{\cX_n}$.

Since $\deg (P^h_n) = \deg (P_n)$ and 
$ \deg (G^h_m) = \deg(G_m)$, in view of Lemma~\ref{lem:4},  it is sufficient to establish that $P^h_n$
generates the ideal of $\overline{\cX_n}$ and, similarly that $G^h_m$
generates the ideal of $\overline{\cY_m}$.

The birational automorphism of $\CPtwo$ induced by interchanging the role of the critical points
will allow to us to only check the above for $P^h_n$. More precisely, for all 
$(c,d) \in \cR \setminus \cY_2$, let $M_{c,d}$ be the M\"obius transformation such that:
$$ M_{c,d} (f^j_{c,d}(-2d/c)) =
\begin{cases}
0 & \mbox{ if } j=0,\\
\infty & \mbox{ if } j=1, \\
1 & \mbox{ if } j =2.
\end{cases}$$
Then there exists $(c',d') \in \cR$ such that
$$f_{c',d'} =  M^{-1}_{c,d} \circ f_{c,d} \circ  M_{c,d}.$$
It follows that $c'$ and $d'$ are rational functions of $(c,d)$.
Moreover, $M_{c',d'} =  M^{-1}_{c,d}$. Therefore, the map
$(c,d) \mapsto (c',d')$ is a birational map $\varphi : \CPtwo \dashrightarrow \CPtwo$.
Furthermore, 
$$\frac{G_m^h}{G} = \varphi^* (P^h_m)= P^h_m \circ \varphi,$$
where $G=G_1^h$ if $m$ is odd, and $G=G_2^h$ if $m$ is even.
After checking that $G_1^h$ and  $G_2^h$ are free of perfect square factors in $\C[c,d,e]$,
it follows that $G_m^h$ generates the ideal of  $\overline{\cY_m}$ if and only if $P^h_m$
generates the ideal of $\overline{\cX_m}$ for all $m \geq 3$.

To show that  $P^h_n$ is square factor free, it is sufficient to show that the degree of $\overline{\cX_n}$
coincides with that of $P_n$. For this we count the intersections of $\overline{\cX_n}$
with the degree $3$ curve $\overline{\cY_1}$. The curve $\overline{\cY_1}$ is defined by the equation
$c^3 - 4dce - 8 d^2 e=0$. Thus, $\overline{\cY_1}$ intersects the line $d=0$ at $[0:0:1]$ and the line $e=0$ at $[0:1:0]$.
Therefore  $\overline{\cY_1} \cap \overline{\cX_n}$ is contained in $\cR$. 
Hence, $\overline{\cY_1} \cap \overline{\cX_n}$ consists of points of transverse intersection between $ \cY_1$  and $\cX_n$.
It follows that $3 \deg(\overline{\cX_n})$ coincides with the cardinality of $ \cY_1 \cap \cX_n$.
Moreover, $\{ f_{c,d} \mid (c,d) \in \cY_1 \}$ is, modulo change of coordinates, the quadratic family $\{ z^2 +v \mid v \in \C \}$.
Thus, points in ${\cY_1} \cap {\cX_n}$ are in one to one correspondence with polynomials of the form $z^2 +v$
such that $z=0$ is periodic of period $q$ where $3 \leq q \mid n$. More precisely, the cardinality of
${\cY_1} \cap {\cX_n} = 2^{n-1} -1 -\delta$ where $\delta = 1$ if $n$ is odd, and $\delta = 2$ if $n$ is even.
That is,
$$3 \deg(\overline{\cX_n}) = 2^{n-1} -1 - \dfrac{1 + (-1)^n}{2} = 3 \deg (P^h_n),$$
and the lemma follows.
\end{proof}

\begin{uremark}
  From the proof above it follows that the birational map $\varphi: \CPtwo \dashrightarrow \CPtwo$ has degree~$7$.
\end{uremark}

\bigskip
To prove Theorem~\ref{1.1},  we need to compute the cardinality of the intersection
$$\cX_{n}\cap \cY_{m}.$$
But by Bezout's theorem, the number of intersections (with multiplicities) of 
$\overline{\cX_{n}}$ and $\overline{\cY_{m}}$
in $\mathbb C\mathbb P^{2}$ is simply the product of their degrees. 
So now we need to compute the number of intersections, with multiplicities, in $\mathbb C\mathbb P^{2}\setminus {\mathcal{R}}$.
Counting the intersections in  $\mathbb C\mathbb P^{2}\setminus {\mathcal{R}}$ is divided into two parts. First we show that there are no 
intersections in the line $[c:d:0]$, and then we count the intersections in the line $[c:0:1]$.

\subsection{No intersections at $c=\infty$ or $d=\infty $.}\label{3.1}

\begin{lemma}
\label{lem:7}
For all $n \geq 3$ and $m \geq 1$,
$$\overline{\cX_n} \cap \overline{\cY_m} \cap \{ x \in \CPtwo \mid x = [c:d:0] \} = \emptyset.$$
\end{lemma}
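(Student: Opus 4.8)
The plan is to work directly with the homogeneous defining polynomials $P^h_n(c,d,e)$ and $G^h_m(c,d,e)$ from Lemma~\ref{lem:6}, and to show that they have no common zero on the line $e=0$. Setting $e=0$ reduces each polynomial to its top-degree part, so the task is to compute the leading forms of $P_n$ and of $cR_m+2dS_m$ in the variables $(c,d)$ and show the two homogeneous binary forms in $(c,d)$ are coprime. First I would analyze the recursion $P_{n+1}=P_n^2+cP_nQ_n+dQ_n^2$, $Q_{n+1}=P_n^2$. Writing $\widetilde{P}_n$, $\widetilde{Q}_n$ for the top-degree homogeneous parts, one checks by induction that $\deg P_n=\deg Q_n$ for all $n\geq 3$ (this is already implicit in the degree computation in Lemma~\ref{lem:6}), so that $P_{n+1}$ has top part $\widetilde{P}_n^2+c\,\widetilde{P}_n\widetilde{Q}_n+d\,\widetilde{Q}_n^2=\widetilde{P}_n(\widetilde{P}_n+c\widetilde{Q}_n)+d\widetilde{Q}_n^2$ and $\widetilde{Q}_{n+1}=\widetilde{P}_n^2$; there is no cancellation because all the contributing monomials have the maximal degree $2\deg P_n+2$, which is strictly larger than the degrees of the lower-order terms hidden in $P_n,Q_n$. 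Starting from $P_3=1+c+d$, $Q_3=1$, this gives explicit control: $\widetilde{P}_3=c+d$, $\widetilde{Q}_3=1$ is degree $0$ — so here one must be slightly careful and start the genuinely homogeneous recursion from $n=4$, where $\widetilde{P}_4=\widetilde{P}_3^2+c\widetilde{P}_3=(c+d)^2+c(c+d)$ and $\widetilde{Q}_4=\widetilde{P}_3^2=(c+d)^2$.

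The key structural observation I would extract is a common-factor pattern. From $\widetilde{Q}_{n+1}=\widetilde{P}_n^2$ and $\widetilde{P}_{n+1}=\widetilde{P}_n(\widetilde{P}_n+c\widetilde{Q}_n)+d\widetilde{Q}_n^2$, any irreducible form dividing both $\widetilde{P}_{n+1}$ and $\widetilde{Q}_{n+1}$ must divide $\widetilde{P}_n$ and hence (from the $d\widetilde{Q}_n^2$ term) must divide $\widetilde{Q}_n$. Tracing this down, the $\gcd(\widetilde{P}_n,\widetilde{Q}_n)$ is a power of $\gcd(\widetilde{P}_3,\widetilde{Q}_3)=\gcd(c+d,1)=1$, except one has to track what happens at the base step $n=3\to 4$ carefully; in fact $\gcd(\widetilde P_n,\widetilde Q_n)=1$ for $n\ge 4$, and the "problematic" homogeneous factor that could be shared, if any, is $c+d$, which one checks does not divide $\widetilde P_4=(c+d)(2c+d)$ to a high enough power to persist — indeed $\widetilde P_4=(c+d)(2c+d)$ and $\widetilde Q_4=(c+d)^2$ share only $c+d$, but then $\widetilde P_5=\widetilde P_4(\widetilde P_4+c\widetilde Q_4)+d\widetilde Q_4^2$, and modulo $c+d$ this is $0\cdot(\cdots)+d\cdot 0=0$... so actually $(c+d)\mid\widetilde P_5$ as well. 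So the honest statement is that $(c+d)^{k_n}$ is the exact power dividing, and one needs a small separate induction on the exponents; alternatively, and more cleanly, I would factor out the common power of $(c+d)$ at each stage and show the reduced forms stay coprime. Either way, the output is a precise factorization of $\widetilde{P}_n$.

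Next I would do the analogous analysis for $\widetilde{R}_m,\widetilde{S}_m$ from $R_1=4d-c^2$, $S_1=d$, and for the combined form $c\widetilde{R}_m+2d\widetilde{S}_m$ (being careful that this sum does not lose degree — since $\deg R_m=\deg S_m+$ possibly a mismatch at small $m$, I would just compute $\deg(cR_m+2dS_m)$ directly, as is done in Lemma~\ref{lem:6}, and identify the top form). Then I would compare: the only candidate common irreducible factors of $\widetilde{P}_n$ and of the top form of $cR_m+2dS_m$ are among the short list of irreducible binary forms actually appearing, and I would check pairwise that no such form divides both. The cleanest route here is to exploit the birational map $\varphi$ from the proof of Lemma~\ref{lem:6}: since $G^h_m/G=\varphi^*(P^h_m)$ and $\varphi$ has degree $7$, the behavior of $\overline{\cY_m}$ at the line $e=0$ is the $\varphi$-pullback of the behavior of $\overline{\cX_m}$ there (together with the exceptional lines of $\varphi$), so it suffices to understand $\overline{\cX_n}\cap\{e=0\}$ and the indeterminacy/exceptional locus of $\varphi$ on $\{e=0\}$, and check these are disjoint from one another after the appropriate transformations.

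The main obstacle, I expect, is pinning down the exact common factor $(c+d)$ (and possibly one or two other low-degree forms) in the leading parts: the recursions do share the factor $c+d$ from $n=4$ onward, so the two leading binary forms are genuinely non-coprime, and the real content is that the *other* irreducible factors differ, plus that the shared factor $c+d$ corresponds to a point $[1:-1:0]$ which one must verify lies on neither closure — i.e., one shows $[1:-1:0]\notin\overline{\cX_n}$ (equivalently $\widetilde P_n$ vanishes at $(1,-1)$ only because of the overall $(c+d)$-power, but the *full* homogeneous polynomial $P^h_n$, not just its top form, is what defines the curve, and restricted to $e=0$ it equals $\widetilde P_n$ exactly — so one genuinely needs $[1:-1:0]\notin\overline{\cX_n}$, which will follow once the full factorization shows the remaining factor is nonzero there, OR from a separate argument that $\overline{\cX_n}$ meets $e=0$ only at points that are not on $\overline{\cY_m}$). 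Resolving this cleanly — deciding whether to push through the explicit factorization or to route everything through $\varphi$ — is where the care lies; the rest is bookkeeping with the two recursions.
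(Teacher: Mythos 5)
You take a route that is genuinely different from the paper's, which never examines the defining polynomials on the line at infinity. The paper's proof of Lemma~\ref{lem:7} is dynamical: as $[c:d:1]\to[1:s:0]$ with $s\neq 0$, the maps $f_{c,d}^2$ converge locally uniformly on $\C\setminus\{-s\}$ to $g_s(z)=1+z^2/(z+s)$, which has a parabolic fixed point at $\infty$; since a parabolic basin must absorb a critical orbit, one of the two critical points of $g_s$ escapes to $\infty$ inside $\C\setminus\{-s\}$, so the corresponding critical orbit of $f_{c,d}$ has at least $\max\{n,m\}+1$ distinct iterates for $(c,d)$ near $[1:s:0]$, and one of $\overline{\cX_n},\overline{\cY_m}$ misses a whole neighbourhood of $[1:s:0]$. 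You instead propose to restrict $P_n^h$ and $G_m^h$ to $e=0$ and show the resulting binary forms $\widetilde P_n,\widetilde G_m$ in $(c,d)$ are coprime. But once Lemma~\ref{lem:6} removes $[1:0:0]$ and $[0:1:0]$, that coprimality \emph{is} Lemma~\ref{lem:7}; so this is a reformulation, not a proof. To make it a proof you would need an inductive invariant controlling the factorisations of both $\widetilde P_n$ and $\widetilde G_m$ uniformly in $n$ and $m$, and none is supplied.

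Several of the preparatory computations also do not hold up, which suggests the factorisation route is harder than it looks. The degrees of $P_n$ and $Q_n$ are not equal for all $n\geq 3$: they agree for even $n\geq 4$, but $\deg P_n=\deg Q_n+1$ for odd $n$ (the paper uses this parity alternation explicitly in the proof of Lemma~\ref{5.3}). As a result your top-degree recursion is wrong in both parities: when $\deg P_n=\deg Q_n$, the summand $P_n^2$ has strictly smaller degree than $cP_nQ_n$ and $dQ_n^2$, so $\widetilde P_{n+1}=\widetilde Q_n(c\widetilde P_n+d\widetilde Q_n)$; when $\deg P_n=\deg Q_n+1$, it is $dQ_n^2$ that drops out and $\widetilde P_{n+1}=\widetilde P_n(\widetilde P_n+c\widetilde Q_n)$ --- your formula keeps all three terms and therefore computes neither. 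Finally, your handling of the factor $(c+d)$ is backwards: you correctly observe that $(c+d)\mid\widetilde P_n$, but since the restriction of $P_n^h$ to $e=0$ is exactly $\widetilde P_n$, this \emph{forces} $[1:-1:0]\in\overline{\cX_n}$, and cannot be argued away by looking at the cofactor. What would have to be proved is that $(c+d)\nmid\widetilde G_m$, i.e.\ $[1:-1:0]\notin\overline{\cY_m}$, and this is again part of what the lemma asserts. The paper's parabolic degeneration settles it instantly at $s=-1$: there the critical point $0$ of $g_{-1}$ lands on the pole $-s=1$ in one step, while the other critical point $2$ escapes to $\infty$, so it is $\overline{\cY_m}$ (not $\overline{\cX_n}$) that stays away from $[1:-1:0]$.
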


\begin{proof}
  Since $[1:0:0]$ and  $[0:1:0] \notin \overline{\cX_n}$ it is sufficient to consider  $x = [1:s:0]$ with $s \neq 0$ 
and show that $x \notin \overline{\cX_n} \cap \overline{\cY_m}$.
First note that if $f_{c,d} \in \cR$, then 
$$f^2_{c,d}(z) = 1 + \dfrac{z^2}{\dfrac{z^2}{c} + z + \dfrac{d}{c}} +  \dfrac{d}{c^2} \dfrac{z^4}{\left( \dfrac{z^2}{c} + z + \dfrac{d}{c} \right)^2}.$$
Thus as $[c:d:1]$ converges to $x$,
$$f^2_{c,d}(z) \to 1 +  \dfrac{z^2}{z + s} = g_s (z)$$
uniformly in compact subsets of $\C \setminus \{ -s \}$.

Note that $g_s$ has a parabolic fixed point at $\infty$, and that $g_s(-s) = \infty$.
Moreover, since one of the forward orbits of the two critical points $0, -2s$ of $g_s$ must be infinite and converge to $\infty$,
 one of the two critical points of $g_s$, call it $\omega$,  has an infinite forward orbit entirely contained
in  $\C \setminus \{ -s \}$. 

If $[c:d:1]$ is sufficiently close to $x$, then the first $\max\{n,m\}+1$ iterates of $\omega_1=0$ (when $\omega =0$) or of $\omega_2 = -2d/c$ (when $\omega=-2s$) are pairwise distinct. Thus, for all $[c:d:1]$ sufficiently close to $x$, we have that  $[c:d:1] \notin {\cX_n} \cup {\cY_m}$. 
Therefore, $x \notin \overline{\cX_n} \cap \overline{\cY_m}$.
\end{proof}

\subsection{Counting intersections at $d=0$.}\label{3.2}
Now our aim is to count the intersections at $d=0$. We start by establishing where the intersections occur.

\begin{lemma}
\label{lem:8}
For all $n \geq 3$, if  
$[c:0:1] \in \overline{\cX_n}$ then 
$c^{-1} = - {4 \cos^{2} \pi p/q}$ for some $1 \le p < q \le n $ with $\gcd(p,q)=1$ and $q \neq 2$.
\end{lemma}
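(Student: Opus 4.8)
The plan is to reduce the membership $[c:0:1]\in\overline{\cX_n}$ to a polynomial equation in $c$ alone, solve that equation explicitly, and read off the normal form for $c^{-1}$. Recall from the proof of Lemma~\ref{lem:6} that $\overline{\cX_n}=\{P^h_n=0\}\subset\CPtwo$; since the affine point $[c:0:1]$ is the point $(c,0)$ in the $(c,d)$-plane and $P^h_n(c,0,1)=P_n(c,0)$, we have $[c:0:1]\in\overline{\cX_n}$ if and only if $P_n(c,0)=0$. The reason for specialising to $d=0$ is that $f_{c,0}(z)=1+c/z=(z+c)/z$ is a M\"obius transformation with associated matrix $\left(\begin{smallmatrix}1&c\\1&0\end{smallmatrix}\right)$, so the dynamics of $0$ under $f_{c,0}$ is governed by the linear recursion $a_0=a_1=1$, $a_{k+1}=a_k+c\,a_{k-1}$, whose characteristic polynomial is $\lambda^2-\lambda-c$.

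First I would run the recursion of Lemma~\ref{lem:6} with $d=0$: writing $\overline P_k=P_k(c,0)$ and $\overline Q_k=Q_k(c,0)$, it becomes $\overline P_{k+1}=\overline P_k(\overline P_k+c\overline Q_k)$, $\overline Q_{k+1}=\overline P_k^{\,2}$, with $\overline P_3=1+c=a_2$ and $\overline Q_3=1=a_1$. An induction (using that at each stage $\overline P_k+c\overline Q_k$ equals the common factor of $\overline P_k,\overline Q_k$ times $a_k$) then gives
$$\overline P_n=a_{n-1}\prod_{j=2}^{n-2}a_j^{\,2^{n-2-j}},\qquad \overline Q_n=a_{n-2}\prod_{j=2}^{n-2}a_j^{\,2^{n-2-j}}.$$
Hence $P_n(c,0)=0$ if and only if $a_j(c)=0$ for some $j$ with $2\le j\le n-1$. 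I expect this bookkeeping step to be the most error-prone part of the argument; everything after it is essentially forced.

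It remains to solve $a_j(c)=0$. Let $\lambda_\pm=\tfrac12(1\pm\sqrt{1+4c})$ be the roots of $\lambda^2-\lambda-c$, so that $\lambda_++\lambda_-=1$ and $\lambda_+\lambda_-=-c$, and $a_j=(\lambda_+^{\,j+1}-\lambda_-^{\,j+1})/(\lambda_+-\lambda_-)$ by the Binet formula. If $c=0$ then $\overline P_n=1\ne0$, and if $c=-\tfrac14$ then $a_j=(j+1)2^{-j}\ne0$; so $a_j(c)=0$ forces $c\notin\{0,-\tfrac14\}$, hence $\lambda_-\ne0$ and $\lambda_+\ne\lambda_-$, and the equation becomes $(\lambda_+/\lambda_-)^{j+1}=1$ with $\lambda_+/\lambda_-\ne1$. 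Thus $\zeta:=\lambda_+/\lambda_-$ is a root of unity of some exact order $q$ with $q\mid j+1$, so $q\le j+1\le n$; moreover $q\ne1$, and $q\ne2$ because $\zeta=-1$ would give $\lambda_++\lambda_-=0$. Write $\zeta=e^{2\pi i p/q}$ with $(p,q)=1$ and $1\le p<q$. From $\lambda_-(1+\zeta)=1$ we get $\lambda_-=1/(1+\zeta)$, $\lambda_+=\zeta/(1+\zeta)$, hence $c=-\lambda_+\lambda_-=-\zeta/(1+\zeta)^2$; and since $1+\zeta=2\cos(\pi p/q)\,e^{\pi i p/q}$ we have $(1+\zeta)^2=4\zeta\cos^2(\pi p/q)$, whence $c^{-1}=-4\cos^2(\pi p/q)$, as required. (Here $\cos(\pi p/q)\ne0$ since $q\ne2$ and $(p,q)=1$, so $c$ is finite and nonzero, consistent with $[c:0:1]$ being an affine point; and $P_n(0,0)=1$ confirms $c\ne0$ independently.)
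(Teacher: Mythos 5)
Your argument is correct, and it takes a genuinely different route from the paper's. The paper argues dynamically: it observes that the cross ratio of $\omega_1,\omega_2,f(\omega_1),f(\omega_2)$ blows up as $(c,d)\to(c_0,0)$ along $\cX_n$, invokes Milnor's Lemma~4.1 to conclude that two of the fixed-point multipliers of $f_{c_k,d_k}$ converge to reciprocal roots of unity of some order $q\le n$, and then identifies these as the multipliers of the limiting M\"obius map $M_{c_0}(z)=1+c_0/z$ via locally uniform convergence on $\C^*$. You instead specialise the explicit recursion from Lemma~\ref{lem:6} to $d=0$, factor $P_n(c,0)$ completely as a product of the Fibonacci-type polynomials $a_j(c)$ (the key observation being that $\overline P_k+c\overline Q_k$ equals the common factor times $a_k$, which makes the induction close), and then solve $a_j(c)=0$ in closed form using the Binet formula for the linear recursion attached to $f_{c,0}$. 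I checked the factorization identity in several low cases and the inductive step, and the algebra leading from $(\lambda_+/\lambda_-)^{j+1}=1$ to $c^{-1}=-4\cos^2(\pi p/q)$ is correct; the degenerate cases $c=0$ and $c=-1/4$ and the exclusion $q\ne2$ are handled properly, and the bound $q\le j+1\le n$ follows from $2\le j\le n-1$. The upshot of your approach is that it is entirely elementary and self-contained — no degeneration-of-moduli-space input, no limit maps — and it yields strictly more information, namely the exact factorization of $P_n(c,0)$ into the $a_j$'s and hence the precise multiplicities at each $c_{p,q}$, which the paper's soft argument does not see (the paper has to recover multiplicities separately via Stimson's theorem in Section~\ref{3.2}). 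The tradeoff is that the paper's dynamical argument generalizes more readily to other one-parameter degenerations where a closed-form recursion is not available.
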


\begin{proof}
For all $(c,d) \in \cR$, we have that the cross ratio 
$$[\omega_1, \omega_2, f_{c,d}(\omega_1), f_{c,d} (\omega_2)]= \dfrac{c^3}{8d^2} - \dfrac{c}{2d},$$
where $\omega_1 = 0, \omega_2 = -2d/c$ and
$$[0,z_2,\infty,z_4]=\frac{z_4}{z_2}.$$
It follows that, given $c_0 \neq 0$ and a sequence $(c_k,d_k) \in \cR \cap \cX_n$ which converges to $(c_0,0)$, the conjugacy
class of $f_{c_k,d_k}$ diverges to infinity in moduli space. According to~\cite[Lemma~4.1]{M2}),  $f_{c_k,d_k}$ has
three fixed points, one with multiplier diverging to $\infty$ and the other two multipliers converge
to  reciprocal roots of unity of order $q$ where $2 \leq q \leq n$.
Uniformly on compact subsets of $\C^*=\C \setminus \{ 0 \}$, the maps  $f_{c_k,d_k}$ converge to 
$$M_{c_0} (z) = 1 + \dfrac{c_0}{z}.$$
Since $\C^*$ contains the fixed points of $M_{c_0}$, the multipliers of the fixed points of
$M_{c_0}$ are of the form $\exp(\pm 2 \pi i p/q)$ where $\gcd(p,q)=1$. 
It follows that
$$c^{-1}_0 = -4 \cos^2(\pi p/q).$$
In particular, $q \neq 2$ and the lemma follows. 
\end{proof}

The rest of this section is devoted to proving the formula below that computes the intersection numbers at 
the relevant points of $d=0$:

\begin{proposition}\label{3.3}
  Consider $n \geq 3$ and $m \geq n$. Let $1 \le p < q \le n$ with $\gcd(p,q)=1$ and $q \neq 2$. 
If $c^{-1}_{p,q} = - {4 \cos^{2} (\pi p/q)}$, then
$$\overline{\cX_n} \bullet_{\mathbf{c}_{p,q}} \overline{\cY_m} = \nu_q(n) \nu_q(m).$$
where $\mathbf{c}_{p,q}=[c_{p,q}:0:1]$.
\end{proposition}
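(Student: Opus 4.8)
The plan is to compute the local intersection multiplicity $\overline{\cX_n} \cdot_{c_{p,q}} \overline{\cY_m}$ at the point $[c_{p,q}:0:1]$ by resolving the behavior of both curves as $d \to 0$ near this point. The key observation is the one already used in Lemma~\ref{lem:8}: as $(c,d) \to (c_{p,q},0)$ within $\cR$, the maps $f_{c,d}$ degenerate to $M_{c_0}(z) = 1 + c_0/z$, a M\"obius transformation whose two finite fixed points have multipliers that are primitive $q$-th roots of unity $\exp(\pm 2\pi i p/q)$. So near $c_{p,q}$, the dynamics of $f_{c,d}$ on the relevant part of the sphere is a small perturbation of an elliptic rotation of order $q$, and the condition "$\omega_1$ periodic of period dividing $n$" becomes a condition on how the perturbed near-rotation closes up. This is precisely the local picture near a root of a satellite component in the Mandelbrot set, which is why the parabolic-perturbation count of hyperbolic components of period dividing $n$ in the $p/q$-limb — that is, $\nu_q(n)$ — should appear.

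First I would make the degeneration explicit: introduce a good local coordinate (for instance, straightening one of the fixed points of $f_{c,d}$ to $0$ and the other to $\infty$, and tracking the multiplier $\lambda = \lambda(c,d)$ at one of them), so that $f_{c,d}$ is written, up to M\"obius conjugacy depending analytically on $(c,d)$, in a normal form where $\lambda \to \omega := \exp(2\pi i p/q)$ as $d \to 0$. Then $c = c(\lambda)$ is a local biholomorphism in $\lambda$ near $\lambda = \omega$, so intersection multiplicities in the $c$-variable can be transported to the $\lambda$-variable. Next I would show that the equation $P_n(c,d) = 0$ defining $\cX_n$, restricted to a transverse slice $c = c_{p,q}$ fixed (or better, worked out in the $(\lambda,d)$ coordinates), factors — up to units and up to the divisor $d = 0$ appropriately — as a product over the various ways the $\omega_1$-orbit can have period $q' \mid n$ with $q \mid q'$, and the multiplicity of the factor corresponding to period dividing $n$ on the $\cX_n$ side is exactly $\nu_q(n)$; symmetrically for $\cY_m$ and $\nu_q(m)$. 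The crucial input is that the hyperbolic components of period dividing $n$ in the $p/q$-limb are counted by $\nu_q(n)$ (the formula quoted from the introduction), and the parabolic implosion / holomorphic motion argument identifies the germ of $\overline{\cX_n}$ at $[c_{p,q}:0:1]$ with the germ of the union of these components' root arcs near the satellite root.

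The main obstacle I expect is the bookkeeping of multiplicities: one must separate (i) the part of the local intersection coming from the fact that $d = 0$ itself carries a high-order tangency of both curves (both $\overline{\cX_n}$ and $\overline{\cY_m}$ accumulate on $d = 0$ at $c_{p,q}$ with some vanishing order in $d$), from (ii) the genuinely transverse count of limbs, and to show the product of the two "limb multiplicities" is $\nu_q(n)\nu_q(m)$ rather than, say, their sum or a mixed term. Concretely, I would use the recursion $P_{n+1} = P_n^2 + cP_nQ_n + dQ_n^2$ together with the known degeneration to analyze the Newton polygon (or the Puiseux expansions) of $P_n$ at $(c_{p,q},0)$: the factor $P_q$ vanishes there (since period-$q$ cycles persist in the limit), and $P_n/\!\!\sim$ the product of primitive-period factors $P^{*}_{q'}$ for $q \mid q' \mid n$ contributes, each with its count of roots inside the $p/q$-limb — summing to $\nu_q(n)$ by the very definition of $\nu_q$. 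The factorization $\nu_q(n)\nu_q(m)$ of the intersection number then follows because near $[c_{p,q}:0:1]$ the two curves meet transversally once the common ambient degeneration (the near-rotation of order $q$) is normalized away — i.e., in the normalized local coordinates $\overline{\cX_n}$ is a union of $\nu_q(n)$ smooth branches and $\overline{\cY_m}$ a union of $\nu_q(m)$ smooth branches, pairwise transverse. I would close by checking the edge case $q = n$ (where $r = 0$ and the "$1/2$" correction in the formula for $\nu_q$ matters, corresponding to the primitive root of the $1/q$-limb being shared rather than doubled) against this local picture.
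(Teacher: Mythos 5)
Your overall picture is the same as the paper's: normalize to coordinates in which the parabolic degeneration is isolated from the dynamical data (the paper passes to the totally marked moduli space and the family $h_{\zeta ,\rho }$, with fixed points fixed at $0$ and $\infty $ and the multiplier $\zeta $ as one coordinate -- precisely your ``straighten a fixed point and track the multiplier''), identify the local multiplicity of each curve along $d=0$ with the limb count $\nu _q$, and then argue that the branches of the two curves are pairwise transverse so that the local intersection number is the product. The gap is at the transversality step. ``Once the common ambient degeneration is normalized away, the two curves meet transversally'' is not a valid reason. In the normalized $(\zeta ,\rho )$-coordinates both $\overline{\cA_n}$ and $\overline{\cB_m}$ still have several branches through $(\kappa ,0)$, each tangent to a line of the form $\zeta -\kappa =a\kappa \rho $, and nothing in the normalization prevents a slope $a$ arising from the $\omega _1$-curve from coinciding with one arising from the $\omega _2$-curve. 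The actual reason they cannot coincide is a dynamical dichotomy in the parabolic implosion limit: as $\rho \to 0$ along a branch with $(\zeta -\kappa )/(\kappa \rho )\to a$, the rescaled $q$-th iterate converges to $g_a(z)=a+z+\tfrac{1}{2(2z-1)}$, which has a parabolic fixed point at $\infty $ and at least one escaping critical orbit; the orbit of $0$ escapes when $\Re a\le 0$ and the orbit of $1$ escapes when $\Re a\ge 0$. Hence branches of the $\omega _1$-periodicity locus force $\Re a>0$ while branches of the $\omega _2$-periodicity locus force $\Re a<0$, so the tangent cones lie in disjoint open half-planes. This is exactly part (3) of the Stimson theorem the paper invokes (with its accompanying remark), and it cannot be replaced by a soft normalization argument.

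Secondly, you treat the identification of the local multiplicity of $\overline{\cX_n}$ along $d=0$ at $c_{p,q}$ with $\nu _q(n)$ as if it followed from the definition of $\nu _q$ plus a parabolic-implosion/holomorphic-motion picture, and propose a Newton polygon analysis of $P_{n+1}=P_n^2+cP_nQ_n+dQ_n^2$ as a backup. In the paper this is not elementary: it is part (2) of Stimson's theorem, quoted from Stimson's thesis, and the change of coordinates of Lemma~\ref{3.5} (via the totally marked moduli space) is what transports that count from $\rho =0$ to $d=0$. Your Newton-polygon sketch is not developed far enough to substitute for it; making it rigorous would essentially amount to reproving Stimson's result, and you would also need to explain why the multiplicities in the $(c,d)$-chart agree with those in the $(\zeta ,\rho )$-chart, which is the role of Lemma~\ref{3.5}.
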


We have seen that the points at infinity on $X_n$ and $Y_m$ are described by the multipliers of their fixed points. So the multiplier of a fixed point is a natural parameter to use near infinity in $X_n$ (or $Y_n$).  To prove the above proposition, it is therefore convenient to work with the {\it totally marked moduli
  space $\mtwofm$}, which is a smooth complex manifold of dimension two (see~\cite[Lemma 6.6.]{M2}), namely, the space of quadratic rational maps with
marked critical points and marked fixed points. The elements of $\mtwofm$ are conjugacy classes
of sextuples $(f,\omega_1, \omega_2, x_1, x_2, x_3)$, where $f$ is a quadratic rational map,
$\omega_j$ are critical points of $f$, and $x_1, x_2, x_3$ is a complete list of the fixed points
of $f$ (with repetitions when $f$ has a multiple fixed point). As before, two such sextuples
are conjugate if there is a M\"obius transformation conjugating the rational maps and 
respecting markings. The conjugacy class of $(f,\omega_1, \omega_2, x_1, x_2, x_3)$ will be denoted
by $[f,\omega_1, \omega_2, x_1, x_2, x_3]$.

In $\mtwofm$ we let $A_n$ (resp. $B_m$) be the curves where the first (resp. the second) critical point is periodic of period $n$ (resp. $m$).
Following  7.4 of \cite{R2}, to study the ends of $A_n$ and $B_m$ it is convenient to consider the family of 
quadratic rational maps defined for $(\zeta,\rho) \in \C^* \times \C \setminus \{-1,-2\}$:
 $$h_{\zeta ,\rho }(z)=\zeta z\left( 1-{2+\rho \over
2(1+\rho )}z\right) \left( 1-{2\over
2+\rho }z\right) ^{-1}$$
$$=\zeta z\left(  1-{\rho ^{2}z\over 4(1+\rho )(1+{\textstyle {1\over
2}}\rho -z)}\right) .$$
A similar
parametrisation was used by James Stimson in his thesis \cite{Sti}. The great advantage of this type of parametrisation is that $h_{\zeta ,0}^k(z)$ is just $\zeta ^kz$, and estimates on $h_{\zeta ,\rho }^k(z)-\zeta ^kz$ for small $\rho $, and for a suitable set of $z$, are easily obtained.
  
The
critical points of $h_{\zeta ,\rho }$ are $\omega_1=1$ and $\omega_2=1+\rho $.  There
are two distinguished fixed points, one at $x_1=0$ and the other at $x_2=\infty $.  The third fixed point $x_3$ is given by the formula
$$x_3=\dfrac{(1+\frac{1}{2}\rho )(1+\rho )(\zeta -1)}{\zeta \rho ^2+(1+\rho )(\zeta -1)}.$$
Therefore, we may parametrise a subset $\cS$ of $\mtwofm$  by $\C^* \times \C \setminus \{0,-1,-2\}$,  using $[h_{\zeta ,\rho }, 1, 1 + \rho, 0, \infty, x_3]$. 

A partial compactification of $\cS$ is achieved by adding the line 
$\mathbb V(\rho )=\{ \rho=0\}$.
That is, we identify $\cS$  with a subset of $\CPtwo $, by identifying $(\zeta ,\rho )$ with $[\zeta :\rho: 1]$.
Now let $\cA_n = A_n \cap \cS$ and $\cB_m = B_m \cap \cS$.
Consider the closures $\overline{\cA_n}$ and $\overline{\cB_m}$ in $\CPtwo $.
The following was proved in Stimson's Thesis.

\begin{theorem}[(Stimson~\cite{Sti})]\label{3.4}
  The following statements hold:
  \begin{enumerate}
  \item The intersection of $\overline{\cA_n}$ (resp. $\overline{\cB_n}$) 
with the line $\mathbb V(\rho )$ is contained in
$$\{ [1:0:1]\}  \cup \{ [\exp(2 \pi i p/q):0:1] \mid 1 \le p < q \le n, \gcd (p,q)=1 \}.$$
From now on, write $\kappa =\exp (2\pi ip/q)$.
  \item Let   $E$ (resp. $F$) 
be an irreducible germ of the curve $\overline{\cA_n}$ (resp. $\overline{\cB_n}$) at $[\kappa :0:1]$.
Then, 
$$\dfrac{\zeta - \kappa}{\kappa \rho} \to a_1 \in \{ a \in \C \mid \Re a > 0 \},$$
as $E \ni (\zeta, \rho) \to (\kappa, 0)$ and,
$$\dfrac{\zeta - \kappa}{\kappa \rho} \to a_2 \in \{ a \in \C \mid \Re a < 0 \},$$
as $F \ni (\zeta, \rho) \to (\kappa, 0)$.
  \item 
 The intersection number of $\overline{\cA_n}$ (resp. $\overline{\cB_n}$) with the line $\mathbb V(\rho )$ at 
$[\kappa :0:1]$ is equal to the number of hyperbolic components of period $n$
 in the $p/q$-limb
of the Mandelbrot set.

  \end{enumerate}
\end{theorem}

\begin{proof}
From the formula we see that 
\begin{equation}\label{3.4.1}h_{\zeta ,\rho }(1)=\zeta \left( 1-\frac{\rho }{2(1+\rho )}\right).\end{equation}
For $\rho $ near $0$,  and for $z\ne 1+\frac{1}{2}\rho +O(\rho ^2)$, 
\begin{equation}\label{3.4.2}h_{\zeta ,\rho }(z)=\zeta z(1+o(1)).\end{equation}
Therefore, for $\rho $ near $0$, unless $\zeta ^q$ is close to $1$ for some $1\le q\le k$, we have $h_{\zeta ,\rho }^\ell (1)=\zeta ^\ell z(1+o(1))$, which is bounded from $1$ for $0<\ell\le k$. Putting $k=n$, we see that we can only have $(\zeta ,\rho )\in \cA_n$ for $\rho $ close to $0$ if $\zeta ^q$ is close to $1$ for some $1\le q\le n$. So (i) follows.

 For part (ii) write:
 \begin{equation}\label{3.4.3}h_{\rho }(1+z\rho )=\zeta (1+z\rho )\left( 1-\frac{\rho }{4(1+\rho )(\frac{1}{2}-z) }\right) .\end{equation}
  It follows from (\ref{3.4.1}) and (\ref{3.4.2}), and using a local Puiseux series in $\rho $ for $\zeta -\kappa$, that $\zeta -\kappa =O(\rho )$ in a neighbourhood of $[\kappa :0:1]$ in $\overline{\cA_n}$, and hence, if $\rho \to 0$ then $(\zeta - \kappa)/(\kappa \rho) \to a$ for some $a\in \C$. So,  for sufficiently small $\rho $, given $z$ in a compact subset of $\C\setminus \{1/2\} $, by the definition of $h_{\zeta ,\rho }$ and (\ref{3.4.2}) and (\ref{3.4.3}), applying $h_{\zeta ,\rho }$ to $h_{\zeta ,\rho }^{j-1}(1+\rho z)$ for $2\le j\le q$, 
  $$h_{\zeta ,\rho }^q(1+z\rho )=\zeta ^q(1+z\rho )\left( 1+\frac{\rho }{4(z-\frac{1}{2})}+o(\rho )\right) =1+\rho \left( qa+z +\frac{1}{4(z-\frac{1}{2})}\right) +o(\rho ).$$
 So, uniformly in compact subsets of $\C \setminus \{1/2\}$,
$$\dfrac{h^q ( 1 + z \rho) -1}{\rho} \to qa + z +\dfrac{1}{4(z-\frac{1}{2})}= g_a(z).$$
The map $g_a$ has a parabolic fixed point at $\infty$ and critical points at $z=0$ and $z=1$.
If $\Re a \leq 0$, it is not difficult (by a direct calculation) to check that  $\Re g^k_a(0)$ is a strictly decreasing sequence and $g^k_a(0)$  diverges to $\infty$.
Similarly, if $\Re a \geq 0$, then $\Re g^k_a(1)$ is a strictly increasing sequence and $g^k_a(1)$ diverges to  $\infty$. Part (ii) of the  theorem follows for $\cA_n$.  The result for $\cB_n$ follows immediately because, from the conjugacy by $z\mapsto (1+\rho )z^{-1}$, we see that $(\zeta ,\rho )\in \cA _n$ if and only if $(\zeta _1,\rho )\in \cB_n$, where 
$$\zeta _1=\zeta ^{-1}\frac{4(1+\rho)}{(2+\rho )^2}=\zeta ^{-1}(1+O(\rho ^2)).$$

\medskip
We now consider (iii), so let  $\kappa =1$ or $\kappa =\exp (2\pi ip/q)$ where $q>1$ and $1\le p<q$, and $p$ and $q$ are relatively prime. The multipliers at the fixed points $0$ and $\infty $ are $\zeta $ and $\zeta _1$ respectively. By (ii), either of these multipliers is a local coordinate  on $\cA_n$ in a deleted neighbourhood of $\mathbb V(\rho )\cap \overline{\cA_n}$. So, considering the multiplier $\zeta _1$, the intersection number of $\overline{\cA_n}$ with $\mathbb V(\rho )$ at $[\kappa :0:1]$ is the number of points $[\zeta :\rho :1]\in \cA_n$, for any fixed $\zeta _1$ sufficiently near $\kappa ^{-1}$, but not equal to it. 

If $\kappa =1$ then this number is at least $2^{n-1}$.  We can see this,
because after blowing up the point $[\zeta =1: \rho=0:1]$ we may observe that
in the coordinates
given by $(\rho, a = (\zeta -1)/\rho)$ the intersection of the proper transform of $\overline{\cA_n}$ with  the line $\rho =0$ is the set of parameters $a$ 
such that $g^n_a (0) =0$, which has degree $2^{n-1}$ in $a$.
 
  If $\kappa =\exp(2\pi ip/q)$, then the number is at least the number of hyperbolic components in the $-p/q$ limb of period $n$. In fact, consider a parameter $v$ in the $-p/q$ limb of the Mandelbrot set such that
the critical point of $Q_v(z) =z^2 + v$ is periodic of period dividing $n$.
Then, for any $0 < r < 1$ and $\zeta_1 (r) = r \exp (2 \pi i p/q)$ there exist 
$\rho(r)$ and $\zeta(r)$ such that the map $h_{\rho(r), \zeta(r)} \in \cA_n$ has an attracting fixed point of multiplier $\zeta_1(r)$ at $\infty$ and in the complement of
the basin of $\infty$ is hybrid equivalent to $Q_v$. According to Petersen~\cite[Corollary~2]{Pe},
as $r \nearrow 1$, we have that $\zeta_1 (r) \zeta (r) \to 1$, thus $\rho(r) \to 0$.
Hence, the intersection number of $\overline{\cA_n}$ with $\V(\rho)$ at 
$[\kappa =\exp (2 \pi i p/q): 0 : 1]$ is at least the number of hyperbolic components of period $n$ in the $-p/q$ limb of the Mandelbrot set, which via $z \mapsto \bar{z}$ is easily seen to coincide with the corresponding number in the 
$p/q$ limb.

  To prove equality, we need to compute the degree in $\zeta $ of the curve $\cA_n$ in $\CPtwo $. We can then obtain the result for $\cB_n$ by the usual birational equivalence. In fact it suffices to compute the degree in $\zeta $ of $H_{n,1}^h-H_{n,2}^h$ where
$$\frac{H_{n,1}}{H_{n,2}}=h_{\zeta ,\rho }^n(1)$$
and $H_{n,j}^h$ is the homogenised version of $H_{n,j}$. The zero set of $H_{n,1}^h-H_{n,2}^h$ is the union of sets $\cA_d$ for $d$ dividing $n$. We claim that the degree is $2^n-1$. Since the total number of hyperbolic components of periods dividing $n$ is $2^{n-1}$ by~\cite{D-H1}, and all but one of these are in limbs of the Mandelbrot set, the exception being the main cardioid, this suffices to show that for each $d$ dividing $n$, $\cA_d$ occurs with multiplicity one in the zero set of $H_{n,1}^h-H_{n,2}^h$, and that we have the equality claimed in (iii).

We write $\deg _\zeta (H)$ for the degree of a polynomial $H$ in $\zeta $. It is easily checked that $H_{k,1}$ has $\zeta $ as a factor for all $k\ge 1$, and hence has no constant term, and that $H_{k,2}$ has a constant term for all $k\ge 1$. The recursive equations are
$$H_{k+1,1}=\zeta H_{k,1}(2(1+\rho )(2+\rho )H_{k,2}-(2+\rho )^2H_{k,1})$$
$$H_{k+1,2}=2(1+\rho )H_{k,2}((2+\rho )H_{k,2}-2H_{k,1})$$
It follows that $H_{k,1}$ has $\zeta ^{2^k-1}$ as a factor, has  a term $(-4)^{2^{k-1}}\zeta ^{2^k-1}$  and has no constant term, and that $H_{k,2}$ has constant term $2\cdot 4^{k-1}$. It also follows that $\deg _\zeta (H_{k,2})\le 2^k-k-1$. 
Thus, the degree of $H_{n,1}^h-H_{n,2}^h$ is $2^n -1$ and the Theorem follows.

 \end{proof}

 \begin{uremark}
   Stimson's result is in fact far more precise than the theorem we have stated and proved above. 
Via the Thurston class of obstructed postcritically finite branched coverings,
he gives a complete characterisation of hyperbolic components
in the $-p/q$ limb that lead to the same irreducible germ of $\overline{\cA_n}$ at a point $[\exp(2 \pi i p/q):0:1]$. However, his proof that there is only one
such hyperbolic component per irreducible germ is not correct.
 \end{uremark}

\begin{lemma}\label{3.5}
  Let $1 \le p < q \le \min\{n,m\}$ with $\gcd(p,q)=1$ and $q \neq 2$. 
Let $c_{p,q}^{-1} = - {4 \cos^{2} (\pi p/q)}$ and $\kappa _{p,q}=\exp(2 \pi i p/q)$. Write ${\bf c}_{p,q}=[c_{p,q}:0:1]$ and ${\bf k }_{p,q}=[\kappa _{p,q}:0:1]$. Then
$$\overline{\cV_n} \bullet _{{\bf c}_{p,q}} \overline{\cW_m} = 
\overline{\cA_n} \bullet_{{\bf k }_{p,q}} \overline{\cB_m}.$$
\end{lemma}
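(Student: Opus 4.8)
The plan is to produce an explicit local biholomorphism $\psi$ between a neighbourhood of $(\kappa,0)$ in the $(\zeta,\rho)$-chart $S$ and a neighbourhood of $(c_{p,q},0)$ in the $(c,d)$-chart of $\CPtwo$, carrying $\overline{\cA_n}$ and $\overline{\cB_m}$ onto $\overline{\cV_n}$ and $\overline{\cW_m}$ respectively; since the local intersection multiplicity is a biholomorphic invariant, this yields the asserted equality at once. For $(\zeta,\rho)\in\cS$ (so $\rho\neq 0$) with $[h_{\zeta,\rho},1,1+\rho]\in\cR$, let $\psi(\zeta,\rho)=(c,d)$ be the point of the $(c,d)$-plane representing this conjugacy class; concretely $f_{c,d}=L_{\zeta,\rho}\circ h_{\zeta,\rho}\circ L_{\zeta,\rho}^{-1}$, where $L_{\zeta,\rho}$ is the M\"obius transformation carrying the initial segment $1,\,h_{\zeta,\rho}(1),\,h_{\zeta,\rho}^2(1)$ of the first critical orbit to $0,\infty,1$. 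Then $\psi$ is algebraic, and since $L_{\zeta,\rho}$ conjugates $h_{\zeta,\rho}$ to $f_{c,d}$ while respecting the marking of the critical points ($1\mapsto 0$ and $1+\rho\mapsto -2d/c$), the periods of the first and of the second critical point are preserved, so $\psi^{-1}(\cV_n)=\cA_n$ and $\psi^{-1}(\cW_m)=\cB_m$ near the points of interest. Finally, at $(\kappa,0)$ the segment $1,h_{\zeta,\rho}(1),h_{\zeta,\rho}^2(1)$ equals $1,\kappa,\kappa^2$, which are pairwise distinct because $\kappa\notin\{1,-1\}$ (this is where the hypothesis $q\neq 2$ enters), so $L_{\zeta,\rho}$, and with it $\psi$, extends holomorphically across $\{\rho=0\}$ to a neighbourhood of $(\kappa,0)$.

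Next I would identify $\psi(\kappa,0)$ and show that $\psi$ is a local biholomorphism there. On $\{\rho=0\}$ the map $h_{\zeta,0}(z)=\zeta z$ has degree one, so $f_{\psi(\zeta,0)}=L_{\zeta,0}\circ h_{\zeta,0}\circ L_{\zeta,0}^{-1}$ has degree one, which forces $d\equiv 0$ along $\{\rho=0\}$ near $(\kappa,0)$; this degree-one map, $M_c(z):=1+c/z$, is conjugate to $\zeta z$, hence has fixed points of multipliers $\zeta$ and $\zeta^{-1}$. Working out the multipliers of the fixed points of $M_c$ (the roots of $z^2-z-c$) gives $\zeta+\zeta^{-1}=-c^{-1}-2$, hence $c=-\zeta(\zeta+1)^{-2}$; in particular $\psi(\kappa,0)=(c_{p,q},0)$ with $c_{p,q}^{-1}=-4\cos^2(\pi p/q)$, in agreement with Lemma~\ref{lem:8}, and $c_{p,q}\neq 0$ because $q\neq 2$. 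Since $d\equiv 0$ on $\{\rho=0\}$, the Jacobian of $\psi$ at $(\kappa,0)$ is triangular, with determinant $(\partial c/\partial\zeta)(\partial d/\partial\rho)$. From $c=-\zeta(\zeta+1)^{-2}$ we get $\partial c/\partial\zeta=(\kappa-1)(\kappa+1)^{-3}\neq 0$ (again using $\kappa\notin\{1,-1\}$); and the second critical point $-2d/c$ of $f_{c,d}$ is the image under $L_{\zeta,\rho}$ of the second critical point $1+\rho$ of $h_{\zeta,\rho}$, so $-2d/c=L_{\zeta,\rho}(1+\rho)=L_{\zeta,\rho}(1)+\rho\,L_{\zeta,\rho}'(1)+O(\rho^2)=\rho\,L_{\zeta,\rho}'(1)+O(\rho^2)$, whence, using $L_{\zeta,\rho}'(1)\neq 0$ (the derivative of a M\"obius map at a regular point) and $c(\kappa,0)=c_{p,q}\neq 0$, we get $\partial d/\partial\rho\neq 0$. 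Thus the Jacobian is nonzero and $\psi$ restricts to a biholomorphism of a neighbourhood $U$ of $(\kappa,0)$ onto a neighbourhood of $(c_{p,q},0)$.

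To conclude, $\psi|_U$ is a biholomorphism with $\psi(\cA_n\cap U)=\cV_n\cap\psi(U)$ and $\psi(\cB_m\cap U)=\cW_m\cap\psi(U)$ (from $\psi^{-1}(\cV_n)=\cA_n$ and $\psi^{-1}(\cW_m)=\cB_m$ near these points), and since a homeomorphism of open sets respects closures, $\psi(\overline{\cA_n}\cap U)=\overline{\cV_n}\cap\psi(U)$ and $\psi(\overline{\cB_m}\cap U)=\overline{\cW_m}\cap\psi(U)$. By biholomorphic invariance of the local intersection multiplicity it follows that $\overline{\cV_n}\cdot_{c_{p,q}}\overline{\cW_m}=\overline{\cA_n}\cdot_{\kappa}\overline{\cB_m}$, which is the lemma. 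The bulk of the work lies in the middle paragraph, proving that $\psi$ is a genuine local biholomorphism at the boundary point --- in particular the non-vanishing of $\partial d/\partial\rho$ at $(\kappa,0)$, that is, the first-order displacement of the second critical point by the normalising conjugacy as $\rho\to 0$; a secondary but essential point is that $\psi$ extends holomorphically across $\{\rho=0\}$ at all, which rests on the initial segment of the first critical orbit of $h_{\zeta,\rho}$ staying non-degenerate there, and hence on the hypothesis $q\neq 2$.
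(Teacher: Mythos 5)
Your proof is correct, and its strategy is the same as the paper's: produce a local biholomorphism between a neighbourhood of $(c_{p,q},0)$ in the $(c,d)$-chart and a neighbourhood of $(\kappa,0)$ in the $(\zeta,\rho)$-chart carrying $\cV_n$ to $\cA_n$ and $\cW_m$ to $\cB_m$, and then invoke biholomorphic invariance of local intersection multiplicity. The difference lies in how the biholomorphism is verified. The paper builds the map in the direction $(c,d)\mapsto(\zeta,\rho)$, taking $\zeta$ to be the multiplier of a holomorphically marked fixed point and $\rho$ a cross-ratio, and then simply asserts that ``the map that forgets the marking of the fixed points'' is a holomorphic inverse. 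You instead construct the inverse directly, normalising the initial segment $1,h_{\zeta,\rho}(1),h_{\zeta,\rho}^2(1)$ of the first critical orbit to $0,\infty,1$, and you check the inverse-function-theorem hypothesis explicitly: the observation that $d\equiv 0$ on $\{\rho=0\}$ makes the Jacobian at $(\kappa,0)$ triangular, and the two diagonal entries $\partial c/\partial\zeta=(\kappa-1)(\kappa+1)^{-3}$ and $\partial d/\partial\rho=-\tfrac{c_{p,q}}{2}L_{\kappa,0}'(1)$ are both non-zero. That computation makes concrete a step the paper leaves implicit, so in a sense your version is the more self-contained one. One small slip in the bookkeeping: to justify $\partial d/\partial\rho\neq 0$ you say ``$c_{p,q}\neq 0$ because $q\neq 2$,'' but $c_{p,q}\neq 0$ holds for every admissible $q$; what $q\neq 2$ actually guarantees is that $c_{p,q}$ is \emph{finite}, since $c_{p,q}^{-1}=-4\cos^2(\pi p/q)$ vanishes exactly when $q=2$. (The same hypothesis is also what you use, correctly, to ensure $1,\kappa,\kappa^2$ are distinct so that $L_{\kappa,0}$ exists.) This does not affect the validity of the argument.
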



\begin{proof} It is sufficient to show that there is a biholomorphic map between
a neighbourhood $U$ of ${\bf c}_{p,q}$ in the $(c,d)$-plane, and a neighbourhood $U'$ of 
${\bf  k }_{p,q}$ in the $(\zeta,\rho)$-plane, that maps $\cV_n$ to $\cA_n$ and
$\cW_m$ to $\cB_m$. If $U$ is small, then for any  $(c,d)\in U$, the multipliers of two of  the fixed points of $f_{c,d}$ are close to $\exp (\pm 2\pi ip/q)$, and if $d\ne 0$ the multiplier of the third fixed point is large. By taking $U$ sufficiently small, we can ensure that the multipliers of any map in $U$ are all different from the multipliers the fixed points of any other map in $U$, and that none of the multipliers is $1$. Therefore we know that none of the maps $f_{c,d}$, for  $(c,d)\in U$, is M\"obius conjugate to any other, and the fixed points vary holomorphically in $U$. We write $x_1(c,d)$ and $x_2(c,d)$ for the fixed points which have multipliers close to $\exp (2\pi ip/q)$ and $\exp (-2\pi ip/q)$ respectively. Since $x_1(c_{p,q},0)$ and $x_2(c_{p,q},0)$ are the solutions of $z^2-z+c_{p,q}=0$, the points $x_1(c,d)$, $x_2(c,d)$ and $0$ are all distinct for all $(c,d)\in U$, if $U$ is sufficiently small. Let $\tau _{c,d}$ denote the unique  M\"obius transformation which maps  $x_1(c,d)$, $x_2(c,d)$ and $0$ to $0$, $\infty $ and $1$. Then $\tau _{c,d}\circ f_{c,d}\circ \tau _{c,d}^{-1}$ is a quadratic rational map with fixed points $0$ and $\infty $ and critical point at $1$. It is therefore of the form $h_{\zeta ,\rho}$ with $\zeta =\zeta (c,d)$ and $\rho =\rho (c,d)$. In fact we have
$$\zeta(c,d)=f_{c,d}'(x_1(c,d)),$$
and
$$\rho (c,d)=\tau _{c,d}(-2d/c)-1.$$
The map $(\zeta ,\rho )$ is clearly a holomorphic injection on $U$ which maps ${\bf c}_{p,q}$ to ${\bf k}_{p,q}$, because all the maps in $U$ are distinct up to M\"obius conjugacy. Hence it must be a holomorphic bijection onto a neighbourhood $U'$ of ${\bf k}_{p,q}$. In any case, the inverse map is easily obtained by using conjugation by a M\"obius transformation which maps $1$, $h_{\zeta ,\rho }(1)$ and $h_{\zeta ,\rho }^2(1)$ to $0$, $\infty $ and $1$. Note that $q=2$ has to be omitted, because we need the points $1$, $h_{\zeta ,\rho }(1)$ and $h_{\zeta ,\rho }^2(1)$ to be bounded apart.
\end{proof}



The proof of Proposition~\ref{3.3} follows, since (ii) of Theorem~\ref{3.4} implies that at an
intersection point of $\overline{\cA_n}$ and  $\overline{\cB_m}$  in $\rho =0$, the tangents  to $\overline{\cA_n}$ and the tangents to $\overline{\cB_m}$ 
are distinct.

\subsection{Low periods and the proof of Theorem~\ref{1.1}}\label{4}

The last ingredients needed to prove  Theorem~\ref{1.1} are contained in the lemma below. 

\begin{lemma}
\label{lem:9}  
 $$\begin{array}{l}\eta_{\rm IV}(1,m) = 2^{m-1},\\ \\
 \eta _{\rm IV}(1,m,p/q)=\nu _q(m),\\ \\
\eta_{\rm II} (2) = 1,\\ \\
\eta_{\rm IV}(2,m)-\eta_{\rm IV}(1,m)=\dfrac{1}{3} 2^m - \dfrac{(-1)^m}{3}.\end{array}$$
Here, $\eta _{\rm IV}(1,m,p/q)$ denotes the number of type IV hyperbolic components in the $p/q$-limb of the Mandelbrot set  with an attractive cycle of period dividing $m$.
\end{lemma}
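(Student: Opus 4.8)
The plan is to treat the four assertions separately, all by reducing to well-understood one-dimensional pictures. For the first two, when $n=1$ the critical point $\omega_1$ is a fixed point, so $f_{c,d}$ (or rather a representative in $\cV_1$) is conjugate to a quadratic polynomial with a fixed critical point, i.e.\ to $z\mapsto z^2+v$ for $v\in\C$; this is exactly the identification already used in the proof of Lemma~\ref{lem:6} via the curve $\overline{\cY_1}$. Under this identification, type IV components with $\omega_1$ in a fixed Fatou component and $\omega_2$ in a cycle of period dividing $m$ correspond to hyperbolic components of the Mandelbrot set of period dividing $m$ (the center being a postcritically finite $z^2+v$ with $0$ periodic of period dividing $m$). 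The total number of such centers is the number of roots of the relevant Gleason-type polynomial, which is $2^{m-1}$; counting only those lying in the $p/q$-limb gives, by definition, $\nu_q(m)$. So the first two lines follow once one matches the combinatorial bookkeeping of centers with the definition of $\nu_q(n)$ given in the introduction, together with the fact (standard, e.g.\ from \cite{D-H1}) that every hyperbolic component of period $\geq 2$ lies in a unique limb, while the period-$1$ component (the main cardioid) is accounted for separately and does not contribute a type IV component with $n=1$.

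For $\eta_{II}(2)=1$: a type II component with a cycle of period dividing $2$ must have period exactly $2$ (period $1$ is impossible for type II since the period of the orbit of Fatou components is $>1$), with the two critical points in the two distinct components of a $2$-cycle. The center is the unique postcritically finite quadratic rational map with a superattracting $2$-cycle containing both critical points; up to M\"obius conjugacy this is $z\mapsto 1/z^2$ (or any standard representative), and uniqueness is classical — this is the single exceptional type II component of period two referred to earlier in Subsection~\ref{2.1}. So $\eta_{II}(2)=1$.

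The last line, $\eta_{IV}(2,m)-\eta_{IV}(1,m)=\tfrac13 2^m-\tfrac{(-1)^m}{3}$, is the substantive one and I expect it to be the main obstacle. The difference counts type IV centers whose $\omega_1$-cycle has period exactly $2$ and whose $\omega_2$-cycle has period dividing $m$. The approach is to intersect the periodic curve $\overline{\cV_2}$ (equivalently, to work on $X_2=\mtwocm\setminus\cR$, parametrised explicitly — $X_2$ is itself a rational curve, and the family of quadratic rational maps with a superattracting or, more generally, a period-$2$ orbit through $\omega_1$ has a standard one-parameter normal form) with $Y_m$, and count. Concretely one puts $\omega_1$ in a $2$-cycle, writes down a one-parameter family $g_t$ of quadratic rational maps realising this, tracks $f^m(\omega_2)=\omega_2$ as a polynomial equation in $t$, and reads off its degree after discarding spurious solutions (those where $\omega_2$ has period a proper divisor of $m$, or where the configuration degenerates — e.g.\ $\omega_2$ landing in the $\omega_1$-cycle, which would not be type IV, and boundary points of the parameter curve). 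The count should come out to a clean exponential $\tfrac13(2^m-(-1)^m)$, and the main work is the bookkeeping at the ends of the parameter curve and the M\"obius normalisation of the period-$2$ family; this is closely analogous to the degree computations in Lemma~\ref{lem:6}, and one can cross-check the answer against Stimson's count (Theorem~\ref{3.4}) summed over limbs and against the low-period data in the Remarks following Theorem~\ref{1.1}.
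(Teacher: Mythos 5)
Your treatment of the first three items matches the paper's argument: $\eta_{IV}(1,m)=2^{m-1}$ is the standard count of postcritically finite $z\mapsto z^2+v$ (main cardioid plus half the odd-denominator angles $k/(2^m-1)$); $\eta_{IV}(1,m,p/q)=\nu_q(m)$ is then the restriction of that count to a $p/q$-limb, which is exactly how $\nu_q$ was defined; and $\eta_{II}(2)=1$ follows from the normal form $z\mapsto\lambda z^{-2}$ for a quadratic rational map whose critical points form a $2$-cycle, any two such being conjugate by $z\mapsto\mu z$. You also correctly observe that with $\omega_1$ fixed the configuration is always type IV rather than II (the two cycles cannot merge).

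For the fourth identity your direction is the paper's — restrict to the one-complex-parameter locus where $\omega_1$ has period exactly $2$ and count solutions of $f^m(\omega_2)=\omega_2$ there — but you have left the entire computation open, and this is precisely where the content lies. The paper uses the affine chart $g_{a,b}(z)=a/(z^2+2z+b)$ (extending Aspenberg--Yampolsky and Timorin), so that $b=0$ is the period-$2$ locus for $\omega_1=\infty$ and $\omega_2=-1$. It then applies Bezout in $\CPtwo$ to $\overline{\cZ_m}\cap\{b=0\}$: the degree of $\overline{\cZ_m}$ is shown to be $2^m-1$ (square-freeness of $T_m^h+U_m^h$ is deduced from that of $P_m^h$ via a birational map $\psi$, as in Lemma~\ref{lem:6}), and the intersection multiplicities at $[0{:}0{:}1]$ and $[1{:}0{:}0]$ are subtracted, which reduces the count to $\deg(T_m^0+U_m^0)$ for an explicit pair of polynomials $g_{a,0}^m(-1)=T_m^0(a)/U_m^0(a)$; an induction then gives $\deg(T_m^0+U_m^0)=\tfrac13(2^m-(-1)^m)$. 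None of this — choice of normal form, square-freeness, the end-of-line multiplicity bookkeeping, the degree induction — is in your sketch, so as it stands the fourth item is unproved. One smaller slip: you propose to discard solutions where $\omega_2$ has period a proper divisor of $m$, but the lemma counts cycles of period \emph{dividing} $m$, so those solutions are to be kept; the only points removed from the Bezout count are the degenerate ones at $a=0$ and $a=\infty$, and the worry about $\omega_2$ landing in the $\omega_1$-cycle is in fact vacuous here, since $g_{a,0}^m(-1)=-1$ forces the orbit of $-1$ to avoid $\{0,\infty\}$.
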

\begin{proof} 
As already mentioned, the formula for $\eta _{\rm IV}(1,m)$ is a consequence of the Douady-Hubbard classification \cite{D-H1} of hyperbolic components in the Mandelbrot set for the parameter family of quadratic polynomials $\{ z^2+v\mid v\in \C\} $, which is naturally identified with $\C$. In fact, the complement of the Mandelbrot set in $\C $ is the intersection with $\C$ of the type I hyperbolic component of rational maps which is mentioned in the introduction. The type I hyperbolic component contains no critically finite maps. In fact, it coincides with the set of $v$ for which the forward orbit of $0$ under $z\mapsto z^2+v$ diverges to $\infty $.  The main cardioid of the Mandelbrot set (as it is known) is the  hyperbolic component of $z\mapsto z^2$. The set of all the other  hyperbolic components in the Mandelbrot set, for which the attractive cycle  is of period dividing $m$, is  in two-to-one correspondence with the set of odd-denominator rationals 
$$\left \{ \frac{k}{2^m-1}:0<k<2^m-1\right \} .$$
Therefore, we obtain
$$\eta_{\rm IV}(1,m) =1+\frac{2^{m}-2}{2}=2^{m-1}.$$
Furthermore, the set of all hyperbolic components in the $p/q$ limb of the Mandelbrot set, for which the attractive cycle is of period dividing $m$, is in two-to-one correspondence with the set of odd-denominator rationals $k/(2^m-1)$ in a closed interval $[1/(2^q-1),2/(2^q-1)]$, since the arguments of the root are of the form $r/(2^q-1)$ and $(r+1)/(2^q-1)$ for some $r$ coprime to $q$. This is the number $\nu _q(m)$. 

The fact that $\eta _{\rm II}(2)=1$ follows immediately from fixing critical points of a quadratic rational map to be $0$ and $\infty $. If these are in a period two cycle then the map must be of the form $z\mapsto \lambda z^{-2}$ for some $\lambda \neq 0$, and any  such maps are conjugate to $z\mapsto z^{-2}$, by a conjugacy of the form $z\mapsto \mu z$. 

We now give a proof of the formula for $\eta_{\rm IV}(2,m)-\eta_{\rm IV}(1,m)$ which is close to the methods of Section~\ref{3.2}. Since a great deal is known about $V_2$, other proofs are possible. For example, one can use the results of a recent paper of Aspenberg-Yampolsky, \cite{Asp-Yam}, some of which occur also in work of Timorin \cite{Tim}, and both of which consolidate the structure that has been known in outline for some time, as evidenced, for example, by the thesis of Luo \cite{Luo} in the 1990's. One can also make use of the theory of matings, which involves using Thurston's criterion~\cite{D-H2} and 
Tan Lei's theorem to decide which matings are realisable~\cite{TL}.

Instead of using $\cR$ as in the case of $V_n$ for $n\geq 3$, we use a parametrisation which extends that used in \cite{Asp-Yam} and \cite{Tim}:
$$g_{a,b}(z)=\frac{a}{z^{2}+2z+b}.$$
If $a\neq 0$, critical points of $g_{a,b}$ are $\infty $ and $-1$, and $g_{a,b}(\infty )=0$. Every M\"obius conjugacy class in $\mtwocm \setminus X_1$, apart from that of $z\mapsto z^{-2}$, is represented by exactly one map $g_{a,b}$ for $a\neq 0$, and therefore $\mtwocm \setminus (X_1\cup\{  z^{-2}\} )$ can be identified with 
$$\{ [a:b:1]\mid a,b\in \C,a\neq 0\} \subset \CPtwo  .$$
 We write $\cZ_m$ for the set of $[a:b:1]$ such that $g_{a,b}$ represents an element of  $Y_{m}$. Then 
 $$\eta _{\rm IV}(2,m)-\eta _{\rm IV}(1,m)=\#( \cZ _m\cap \{ [a:0:1]\mid a\neq 0\} ).$$
 As in the earlier cases, all intersections are transverse. We write  $\overline{\cZ _m}$ for the closure of $\cZ _m$ in $\CPtwo $. The plane $b=0$ in $\CPtwo $ is the set  of all $[a:0:1]$ together with $[1:0:0]$. It will be denoted by $\V(b)$, the vanishing set of the linear polynomial $b$. Provided that all zeros are simple, by Bezout's Theorem,
$$\# (\overline{\cZ _m}\cap \V(b))=\deg (\overline{\cZ _m}). $$

Therefore, 
$$\eta _{\rm IV}(2,m)-\eta _{\rm IV}(1,m)=\deg (\overline{\cZ _m})-\overline{\cZ_{m}}\bullet  _{[0:0:1]}\V (b)-\overline{\cZ_{m}}\bullet _{[1:0:0]}\V(b)$$
where $\overline{\cZ_{m}}\cdot _{[0:0:1]}\V(b)$ denotes the intersection number of $\overline{\cZ_{m}}$ with $\V(b)$ at $[0:0:1]$. To compute $\deg (\overline{\cZ _m})$, we write 
$$g_{a,b}^{m}(-1)=\frac{T_{m}(a,b)}{U_{m}(a,b)}$$
Then $\cZ_{m}$ is the zero set of $T_{m}^{h}+U_{m}^{h}$, where, following the notation of Section \ref{2.2}, $T_{m}^{h}$ and $U_{m}^{h}$ are the homogenised versions of the polynomials $T_{m}$ and $U_{m}$. Induction shows that the degrees of $T_{m}$, $U_{m}$  and $T_m+U_m$  are $2^{m}-1$ for all $m\geq 1$.  In fact, the highest degree terms of $T_{m+1}$ and $U_{m+1}$ come from the highest degree terms of $aU_m^2$ and $bU_m^2$ respectively. As in Section \ref{2.2} we see that, for all $m\geq 3$, there is a birational map $\psi :\CPtwo \to \CPtwo $ such that
$$P_m^h\circ \psi =\begin{cases} T_{m}^{h}+U_{m}^{h}&{\rm{\ if\ }}m{\rm{\ is\ odd}}\\ \\ \dfrac{T_{m}^{h}+U_{m}^{h}}{T_{2}^{h}+U_{2}^{h}}&{\rm{\ if\ }}m{\rm{\ is\ even.}} \end{cases}$$
Hence,  as in Section \ref{2.2}, since $P_m^h$ is square-free for all $m\geq 3$, and since $T_m^h+U_m^h$ is square-free for $m=1$ or $m=2$, this is also true for $T_m^h+U_m^h$ for all $m\geq 3$. So 
$$\deg (\cZ_m)=\deg (T_m+U_m)=2^m-1.$$
So now we need to compute $\overline{\cZ_{m}}\bullet  _{[0:0:1]}\V(b)$ and $\overline{\cZ_{m}}\bullet  _{[1:0:0]}\V(b)$. We use homogenised coordinates $[a:b:t]$. We write 
$$\begin{array}{ll} T_m(a,b)=T_m^1(a)+bT_m^2(a,b),\ \ &T_m^h(a,b,t)=T_m^{h,1}(a,t)+bT_m^{h,2}(a,b,t),\\ \\U_m(a,b)=U_m^1(a)+bU_m^2(a,b),\ \ &U_m^h(a,b,t)=U_m^{h,1}(a,t)+bU_m^{h,2}(a,b,t)\end{array}$$
Then $\overline{\cZ_{m}}\bullet  _{[0:0:1]}\V(b)$  and $\overline{\cZ_{m}}\bullet _{[1:0:0]} \V(b)$  are the maximum powers of $a$ and $t$ respectively which divide $T_m^{h,1}+U_m^{h,1}$. The first of these is the maximum power of $a$ dividing $T_m^1+U_m^1$. The second is $\deg (T_m+U_m)-\deg (T_m^1+U_m^1)$. So if we write
$$g_{a,0}^m(-1)=\frac {T_m^0(a)}{U_m^0(a)}$$
where $T_m^0$ and $U_m^0$ have no non-zero power of $a$ as a common factor, and, provided that $T_m^0+U_m^0$ has a non-zero constant term, we see that 
$$\eta _{\rm IV}(2,m)-\eta _{\rm IV}(1,m)=\deg (T_m^0+U_m^0).$$

A straightforward induction gives
$$T_1^0=a,\ \ U_1^0=-1$$
$$T_2^0=1,\ \ U_2^0=a-2,$$
$$\begin{array}{l}T_{2k+1}^0=a(U_{2k}^0)^{2},\\ \\
U_{2k+1}^0=T_{2k}^0(T_{2k}^0+2U_{2k}^0),\\ \\
T_{2k+2}^0=(U_{2k+1}^0)^{2}=(T_{2k}^0)^2(T_{2k}^0+2U_{2k}^0)^2,\\ \\
U_{2k+2}^0=T_{2k+1}^0(T_{2k+1}^0+2U_{2k+1}^0)/a=(U_{2k}^0)^2(a(U_{2k}^0)^2+2(T_{2k}^0)^2+4T_{2k}^0U_{2k}^0).\end{array}$$
Induction also gives $\deg (U_{2k}^0)>\deg (T_{2k}^0)$ for all $k\geq 1$ and $\deg (T_{2k+1}^0)>\deg (U_{2k+1}^0)$ for all $k\geq 0$. Therefore the degree of $T_m^0+U_m^0$ is the maximum of the degrees of $T_m^0$ and $U_m^0$. More precisely, we have
$$\begin{array}{l}\deg (T_{2k+1}^0)=2\deg (U_{2k}^0)+1,\\ \\ \deg (U_{2k+1}^0)=\deg (T_{2k}^0)+\deg (U_{2k}^0),\\ \\  \deg (T_{2k+2}^0)=2\deg (U_{2k+1}^0)=2(\deg (T_{2k}^0)+\deg (U_{2k}^0)),\\ \\ \deg (U_{2k+2}^0)=4\deg (U_{2k}^0)+1.\end{array}$$
This gives
$$\begin{array}{l}\deg (U_{2k}^0)=\sum _{i=0}^{k-1}4^i=\dfrac{4^k-1}{3}=\dfrac{2^{2k}-1}{3},\\ \\ \deg (T_{2k+1}^0)=\dfrac{2^{2k+1}+1}{3}.\end{array}$$
So we obtain
$$\deg (T_m^0+U_m^0)=\frac{2^{m}-(-1)^{m}}{3}.$$
It remains to show that the constant term of $T_m^0+U_m^0$ is non-zero. For this, it suffices to show that if $a_{2k}$ and $b_{2k}$ are the constant terms of $T_{2k}^0$ and $U_{2k}^0$, then 
$$b_{2k}<0<a_{2k}<-b_{2k}$$
for all $k\ge 1$. This is true for $k=1$. It then follows for all $k$ by induction, from
$$a_{2k+2}=a_{2k}^2(a_{2k}+2b_{2k})^2,\ \ \ b_{2k+2}=a_{2k}b_{2k}(2b_{2k})(a_{2k}+2b_{2k}).$$

\end{proof}

\section{Type II components}\label{5}

As before, we let $\eta _{\rm II}(m)$ denote the number of type II components of period dividing $m$, in the space $\cM _2^{cm}$, and $\eta _{\rm II}'(m)$ is the number of type II components of period exactly $m$. For $1\leq j<m$, let $\eta _{\rm II}(m,j)$ denote the number of type II components of period $\geq 3$ dividing $m$ such that the second marked critical point $\omega _2$ is in the $j$'th iterate of the immediate attractive basin of $\omega _1$. Note that
$$\sum _{d\mid m,d\geq 3}\dfrac{m}{d}\eta _{\rm II}'(d)=\sum _{j=1}^{m-1}\eta _{\rm II}(m,j).$$ 
 We have already seen that $\eta _{\rm II}(2)=\eta _{\rm II}(2,1)=1$. The aim of this section is to prove Theorem~\ref{1.2}. In fact, we obtain a sharper result by giving a formula for $\eta _{\rm II}(m,j)$. 

\begin{theorem}\label{5.1} For all $m\geq 3$, and all $1\leq j<m$,
\begin{equation}\label{5.1.1}\begin{array}{ll}\eta _{\rm II}(m,j)=&\dfrac{7}{36}2^m-\dfrac{1}{12}(2^j+2^{m-j})-\dfrac{(-1)^m}{36}((-2)^j+(-2)^{m-j})\\ \\
\ &-\dfrac{1}{4}-\dfrac{5}{36}(-1)^m+\dfrac{1}{12}((-1)^j+(-1)^{m-j})\\ \\
\ &-\dfrac{1}{2}{\displaystyle \sum _{3\leq q}\phi (q)\nu_{q}(j)\nu_{q}(m-j).}\end{array}\end{equation}

Hence 
\begin{equation}
 \begin{array}{ll}{\displaystyle \sum _{d\mid m,d\geq 3}\dfrac{m}{d}\eta'_{\rm II}(d)}=&
\dfrac{7}{36}m2^m-\dfrac{37}{108}2^m-\dfrac{m}{4}-(-1)^m\dfrac{5}{36}m+\dfrac{1}{2}+(-1)^m\dfrac{5}{54}\\ \\
\ &-\dfrac{1}{2}{\displaystyle \sum _{3\le q \le j\le m-q}\phi (q)\nu_q(j)\nu_q(m-j).}\end{array}\end{equation}
\end{theorem}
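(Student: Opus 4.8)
The plan is to prove Theorem~\ref{5.1} by the same Bezout-theoretic strategy used for Theorem~\ref{1.1}, but now organized so as to keep track of the preperiod $j$ of the second critical point. First I would set up the relevant curves in $\mtwocm$ (or rather in the affine chart $\cR\cong\C\times\C^*$): the curve $\cX_n$ on which $\omega_1$ is periodic of period dividing $n$, and a new curve $\cY_{m,j}$ on which $\omega_2$ lands after exactly $j$ iterations in the periodic cycle of $\omega_1$ (equivalently, $f^{j}(\omega_2)=f^{m}(\omega_2)$ but $f^{j-1}(\omega_2)\neq f^{m-1}(\omega_2)$, intersected with $\cX_n$ for appropriate $n$). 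Since we only want type II components, both critical points lie in the \emph{same} cycle of Fatou components; so the right object is really an intersection inside $\cX_m$, and $\eta_{II}(m,j)$ will be extracted as the number of transverse intersection points of $\overline{\cX_m}$ with the hypersurface cutting out ``$f^j(\omega_2)$ is in the period-dividing-$m$ cycle generated by $\omega_1$, with preperiod exactly $j$.'' As in Section~\ref{2.1}, smoothness and transversality at the relevant (center) points is guaranteed by the Blaschke-product parametrisation of type II components, so Bezout applies cleanly.

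The computation then splits, exactly as in \eqref{1.1.1}, into (a) the product of the degrees of the two projective curves, (b) the intersection multiplicities at the points of $\CPtwo\setminus\cR$ (the lines $d=0$ and $e=0$), and (c) corrections for low period. For (a) I would write down, using the recursions of Lemma~\ref{lem:6} for $P_n,Q_n$ together with a new recursion for the polynomial encoding ``$f^j(\omega_2)=$ some periodic point of $\omega_1$'', the homogeneous degree of $\overline{\cX_m}$ — already known to be $\tfrac16 2^m-\tfrac12-\tfrac16(-1)^m$ — and the degree of the preperiod-$j$ curve; this should produce the leading terms $\tfrac{7}{36}2^m$ together with the $2^j+2^{m-j}$ and $(-2)^j+(-2)^{m-j}$ corrections visible in \eqref{5.1.1}, after subtracting the contribution of lower preperiods (a Möbius-type inversion $\eta(m,j)$ vs.\ ``$f^j=f^m$'' without the exactness condition). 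For (c), the low-period corrections, I would mirror Lemma~\ref{lem:9}: handle $\omega_1$ of period $1$ or $2$ separately using the quadratic-polynomial family $z^2+v$ and the $z\mapsto \lambda z^{-2}$ normal form, contributing the constant $-\tfrac14-\tfrac{5}{36}(-1)^m$ and the $\tfrac1{12}((-1)^j+(-1)^{m-j})$ terms.

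The heart of the proof — and the main obstacle — is (b): computing the intersection number of $\overline{\cX_m}$ with the preperiod curve at the points $c^{-1}=-4\cos^2(\pi p/q)$ on the line $d=0$, which is where the $\tfrac12\sum_{3\le q}\phi(q)\nu'_q(j)\nu'_q(m-j)$ term comes from. I would transport the question to the totally marked space $\mtwofm$ and Stimson's parametrisation $h_{\zeta,\rho}$, as in Lemma~\ref{3.5} and Theorem~\ref{3.4}, so that the relevant ends of the curves are understood in terms of hyperbolic components of the Mandelbrot set landing in the $p/q$-limb. The new point is that, whereas in the type IV case the two curves contributed $\nu_q(n)$ and $\nu_q(m)$ branches at $\kappa=e^{2\pi i p/q}$ with tangents on opposite sides of $\rho=0$ (hence all intersections transverse and counted once), here the count must be refined by the preperiod $j$: the number of ``type II at $\kappa$ with preperiod exactly $j$'' branches is $\nu'_q(j)$, and I expect the combinatorics (carried out in the Appendix's discussion of $\nu'_q(n)$) to show that a branch with preperiod-parameter $j$ on the $\omega_1$ side meets a branch with preperiod-parameter $m-j$ on the $\omega_2$ side transversally, giving the product $\nu'_q(j)\nu'_q(m-j)$, summed over $p$ coprime to $q$ (the $\phi(q)$ factor) and halved because the roles of the two critical points are symmetric. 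Verifying this refined branch count and the transversality of the tangents at $\rho=0$ — i.e.\ the preperiod-graded analogue of part (3) of Theorem~\ref{3.4}, via the limiting parabolic map $g_a$ — is the delicate step; once it is in hand, assembling (a)+(b)+(c) and subtracting the non-type-II (i.e.\ type IV or $\omega_2$ in a strictly earlier Fatou component) contributions yields \eqref{5.1.1}, and summing $\eta_{II}(m,j)$ over $1\le j<m$ using the closed forms for $\sum_j 2^j$, $\sum_j(-2)^j$, $\sum_j(-1)^j$ and reindexing the double sum gives \eqref{1.2.1}.
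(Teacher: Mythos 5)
The overall Bezout-theoretic skeleton you sketch (product of degrees minus intersections on the lines $d=0$ and $e=0$, with a Stimson-type tangent analysis at $\rho=0$ yielding the $\nu'_q(j)\nu'_q(m-j)$ products) is the right shape, and you correctly identify the $d=0$ count as the hard step. But your choice of curves is wrong, and this is not a minor presentational difference: it invalidates the whole setup. You propose to intersect $\cX_m$ (the periodicity curve for $\omega_1$) with a ``preperiod-$j$'' curve such as $\{f^j(\omega_2)=f^m(\omega_2),\ f^{j-1}(\omega_2)\neq f^{m-1}(\omega_2)\}$. In the center of a type II component of period $d\mid m$, the second critical point $\omega_2$ is \emph{itself periodic}, sitting at position $j\bmod d$ in the cycle of $\omega_1$; it is not preperiodic. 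The index $j$ in $\eta_{II}(m,j)$ is a position in the cycle, not a preperiod, and your hypersurface condition ``$f^j(\omega_2)=f^m(\omega_2)$'' merely says $f^j(\omega_2)$ lies on \emph{some} cycle of period dividing $m-j$ --- it does not force $\omega_2$ into the cycle of $\omega_1$. The intersection with $\cX_m$ therefore picks up centers of type III and type IV components as well, and the ``subtracting the non-type-II contributions'' you mention at the end would be a substantial additional computation, not a correction. The paper avoids all of this by intersecting the two \emph{linking} curves $\cP_j=\{f_{c,d}^j(0)=-2d/c\}$ and $\cQ_{m-j}=\{f_{c,d}^{m-j}(-2d/c)=0\}$: the condition $f^j(\omega_1)=\omega_2$ and $f^{m-j}(\omega_2)=\omega_1$ forces both critical points onto one cycle of period dividing $m$, with $\omega_2$ at position $j$, so $\cP_j\cap\cQ_{m-j}$ is \emph{exactly} the set of centers counted by $\eta_{II}(m,j)$, with no leftover types and no M\"obius inversion.

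A second concrete error: you attribute the constants $-\tfrac14-\tfrac{5}{36}(-1)^m$ and $\tfrac1{12}((-1)^j+(-1)^{m-j})$ to a low-period correction in the spirit of Lemma~\ref{lem:9}. In fact there is no low-period correction in the type~II count --- $\cP_j\cap\cQ_{m-j}$ automatically excludes periods~$1$ and~$2$ inside $\cR$ --- and those terms come from the intersection multiplicities at the coordinate points $[0{:}0{:}1]$ and $[0{:}1{:}0]$ (Lemma~\ref{5.3}): specifically $2^{m-j-1}$ at $[0{:}0{:}1]$ and $\tfrac16(1-(-1)^j)(2^{m-j}-(-1)^{m-j})$ at $[0{:}1{:}0]$, assembled together with the degree product $\bigl(\tfrac16(2^j-(-1)^j)+\tfrac12\bigr)\bigl(\tfrac16(7\cdot2^{m-j}-(-1)^{m-j})-\tfrac12\bigr)$. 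You also do not mention the degree computation for these curves, which in the paper (Lemma~\ref{lem:1}) relies essentially on the $v$-coordinate parametrisation of the type~I component from~\cite{R4} to show $cP_j+2dQ_j$ is square-free; this is a genuinely nontrivial ingredient you have left implicit.

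So: you have recognized the strategy and correctly flagged the Stimson-type multiplicity computation at $\rho=0$ as the crux, but the proof as proposed would not go through because the wrong curves are being intersected (an extra, uncontrolled, type III/IV contribution), because $j$ has been misread as a preperiod rather than a cyclic position, and because the constant terms have been misattributed to a correction step that the paper never needs.
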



\begin{uremark} \noindent 1. The formula is symmetric in $j$ and $m-j$, as expected, because we can interchange $\omega _1$ and $\omega _2$.

\noindent 2. In particular,
$$\begin{array}{l}
\eta _{\rm II}({3,1})=\eta _{\rm II}({3,2})=1,\\ \\
\eta _{\rm II}({4,1})=\eta _{\rm II}({4,2})=\eta _{\rm II}({4,3})=2,\\ \\
\eta _{\rm II}({5,j})=5{\rm{\ \ for\ \ }}1\leq j\leq 4,\\ \\
\eta _{\rm II}({6,j})= 10{\rm{\ \  for \ \ }}1\le j\le 5,\\ \\
\eta _{\rm II}({7,j})=\begin{cases}21{\rm{\ \ for\ \ }}j=1,2,5,6,\\ 22{\rm{\ \ for\ \ }}j=3,4,\end{cases}\\ \\
\eta _{\rm II}({8,j})=\begin{cases}42{\rm{\ \ for\ \ }}j=1,2,3,5,6,7,\\ 44{\rm{\ \ for\ \ }}j=4.
\end{cases}\end{array}$$
The only  non-zero contribution to $\eta _{\rm II}({m,j})$ from the last row of (\ref{5.1.1}), in this list, is when $m=6$, $j=q=3$, or $m=7$, $j=3$ or $4$ and $q=3$, or $m=8$ and $j=3$ or $5$ and $q=3$, or $j=4$ and $q=3$ or $4$.
\end{uremark}

\subsection{Outline Proof}
\label{5.2} 

We use the representation of the elements of $\cR=\cM_2^{cm}\setminus X_2$ by maps $f_{c,d}$, where $(c,d) \in \C \times \C^*$ as in Section~\ref{2.1}.
Given $m \geq 3$, for $j \geq 1$ we consider the curves in $\cR$:
$$\cP_j = \left \{(c,d) \in \cR \mid  f_{c,d}^j(0)=-\frac{2d}{c}\right  \}$$
and 
$$\cQ_{m-j} =\left  \{(c,d) \in \cR \mid  f_{c,d}^{m-j} \left(-\dfrac{2d}{c} \right)=0 \right  \}.$$
The number $\eta _{\rm II}({m,j})$ is then given by the cardinality of
$\cP_j \cap \cQ_{m-j}$.
All intersections between $\cP_j$ and $\cQ_{m-j}$  are transverse, by Corollary~\ref{cor:TransversePreperiodic}.

As  in the case of type IV components, we consider the closure of $\cP_j$ and $\cQ_{m-j}$ in
$\CPtwo$, and apply Bezout's Theorem.

We will start by computing the degree of  $\overline{\cP_j}$ and $\overline{\cQ_{m-j}}$ and then continue to subtract the intersections at ``infinity'' from the product of the degrees.
The extra difficulty for counting intersections at infinity arises 
from the fact that we will have to establish an analogue  Stimson's Theorem~\ref{3.4}. 
Both computing the degrees, and computing the intersections at infinity, 
will rely heavily on the parametrisation of the unique Type I component 
introduced in~\cite{R4}.

\subsection{The $v$ coordinate}\label{5.7} 
We proceed to summarise the relevant results contained in~ \cite{R4} related to the parametrisation of the Type I hyperbolic component.

For each $\zeta \in \D^* = \{ \zeta \mid 0 < |\zeta | <1 \}$ we consider the quadratic rational map
$$\tau_\zeta (z) = z \dfrac{z + \zeta}{1 + \bar{\zeta} z}.$$
In terms of the notation of Section~\ref{type-param}, $$\beta _{-\zeta }=\frac{1+\bar{\zeta }}{1+\zeta }\tau _{\zeta }.$$
Note that both $z =0$ and $z=\infty$ are  attracting fixed points of $\tau_\zeta$. The basin of $z=0$ is $\D$ and contains a unique critical point
which we denote by $c(\zeta)$. 
The dynamics in $\D$ is semiconjugate, via the K\"onigs coordinate, to multiplication by $\zeta$.
The  K\"onigs coordinate  is the unique holomorphic map $\phi_\zeta : \D \to \C$ such that $\zeta \phi_\zeta (z) = \phi_\zeta \circ \tau_\zeta (z)$ 
and $\phi_\zeta (0)=0, \phi_\zeta (c(\zeta)) =1$. Such a map is an isomorphism between a neighbourhood of the origin and the unit disk.
More precisely, there exists a conformal map $\psi_\zeta : \D \to \psi_\zeta(\D)$ such that 
$\psi_\zeta(0) =0$ which is an inverse of $\phi_\zeta$ (that is, $\phi_\zeta \circ \psi_\zeta (z) = z$,
and for all $w \in  \psi_\zeta(\D)$, we also have that $\psi_\zeta \circ \phi_\zeta (w) = w$).

\medskip
Now consider an element $[f,\omega_1,\omega_2] \in \cM^{cm}_2$ which lies in the Type I component. Then $f$ has an attracting fixed point $z_0$, which we assume has multiplier $\zeta \neq 0$. Similarly to above, there exists a conformal isomorphism
$\psi_f : \D \rightarrow \psi_f(\D)$ with image contained in the basin of $z_0$ such that $\psi_f(0) = z_0$, $\psi_f(\zeta z) =f(\psi_f(z))$ and at least one critical point of $f$ lies in $\partial \psi_f(\D)$. 
(This map $\psi_f$ always extends to a homeomorphism from $\overline{\D}$ onto its image.) 

Let $\cU$ be the subset of the Type I component formed by all $[f,\omega_1,\omega_2]$
with an attracting fixed point of non-vanishing multiplier  such that
$\omega_1 \notin \partial \psi_f(\D) \ni \omega_2$. Note that $\cU$ is well-defined, since
the required properties are invariant under conjugacies that respect critical markings.

Given $[f,\omega_1,\omega_2] \in \cU$, with attracting fixed point $z_0$ with multiplier $\zeta=\zeta(f)$, and basin of attraction $U_f$, there exists a unique  
K\"onigs coordinate $\phi_f:U \to \C$ that semiconjugates $f$ with multiplication
by $\zeta(f)$ such that $\phi_f(\omega_2)=1$, and which is equal to $\psi _f^{-1}$ near $z_0$. In the sequel, we will always assume that
$\phi_f$ is this unique semiconjugacy.
In a neighbourhood of $z_0$, the map $\psi_\zeta \circ \phi_f$ is a conjugacy between $f$ and $\tau_\zeta$. 
Taking iterated preimages, the conjugacy $\psi_\zeta \circ \phi_f$ uniquely extends 
to a simply connected domain contained in $U_f$ containing the critical value $f(\omega_1)$.
The $v$-coordinate $v(f)$ of $f \in \cU$ is,  by definition, the image of $f(\omega _1)$ under the conjugacy extending $\psi_\zeta \circ \phi_f $.

According to~\cite{R4},
$$\begin{array}{ccc}
\cU & \to & \D^* \times \D \\
\left[ f , \omega_1, \omega_2 \right] & \mapsto & (\zeta(f), v(f))
\end{array}
$$
is a real-analytic homeomorphism.

\subsection{The degrees of the curves}

\begin{lemma}
  \label{lem:1}
 Let $m \geq 3$. The following statements hold:
\begin{itemize}
\item For all $j \ge 1$, the degree of $\overline{\cP_j}$ is
$$ \dfrac{1}{6}(2^{j}-(-1)^{j})+\dfrac{1}{2}.$$
\item For all $j \ge 1$,  the degree of $\overline{\cQ_{m-j}}$ is
$$\dfrac{1}{6}(7 \cdot 2^{m-j}- (-1)^{m-j})-\dfrac{1}{2}.$$
\end{itemize}
\end{lemma}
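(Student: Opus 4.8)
The plan is to follow exactly the template of the proof of Lemma~\ref{lem:6}: identify $\cP_j$ and $\cQ_{m-j}$ as affine curves cut out by explicit polynomials, show by induction that these polynomials have the claimed degrees, and then argue that the polynomials generate the ideals of the projective closures (equivalently, are square-free and have no extra components at infinity), so that the degree of the closure equals the degree of the defining polynomial. First I would set up the iteration polynomials: writing $f_{c,d}^j(0) = P_j(c,d)/Q_j(c,d)$ with $P_j,Q_j$ as in Lemma~\ref{lem:6}, the curve $\cP_j = \{ f_{c,d}^j(0) = -2d/c\}$ is the zero set of $c\,P_j + 2d\,Q_j$, and similarly, writing $f_{c,d}^{m-j}(-2d/c) = R_{m-j}(c,d)/S_{m-j}(c,d)$, the curve $\cQ_{m-j}$ is the zero set of $c\,R_{m-j} + 2d\,S_{m-j}$ — wait, more precisely $\cQ_{m-j}=\{f^{m-j}_{c,d}(-2d/c)=0\}$ is the zero set of $R_{m-j}$, but one must be careful about common factors of $R_{m-j}$ and $S_{m-j}$, since $-2d/c$ is not a generic point. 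I would first reduce $R_{m-j}/S_{m-j}$ to lowest terms, call the numerator $\widetilde R_{m-j}$, and note that $\cQ_{m-j}$ is its zero set intersected with $\cR$.

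Second, I would carry out the degree computations. For $\cP_j$: using $\deg P_j = \tfrac16 2^j - \tfrac12 - \tfrac16(-1)^j$ (established in Lemma~\ref{lem:6}) and $\deg Q_j = \deg P_{j-1}^2 = 2\deg P_{j-1}$, a straightforward induction on the recursion $P_{j+1}=P_j^2+cP_jQ_j+dQ_j^2$, $Q_{j+1}=P_j^2$ gives the degree of $cP_j+2dQ_j$; the leading-term bookkeeping (which of $P_j$, $Q_j$ has larger degree, and that no cancellation occurs among top-degree monomials, exactly as in Lemma~\ref{lem:6} where both $c^{\deg}$ and $d^{\deg}$ survive) yields $\deg \overline{\cP_j} = \tfrac16(2^j-(-1)^j)+\tfrac12$. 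For $\cQ_{m-j}$ the base cases are $R_1 = 4d-c^2$, $S_1 = d$, and one checks $\deg(cR_1+2dS_1) = \deg(c^3 - 4cd e - 8d^2 e)$ homogenized $=3$, consistent with $\tfrac16(7\cdot 2 - (-1)) - \tfrac12 = 3$; the induction then runs on the $R,S$ recursion just as for $G^h_m$ in Lemma~\ref{lem:6}, using $\deg R_m = \tfrac76 2^m + \tfrac12 - \tfrac16(-1)^m$ there, and one tracks the degree drop caused by the common factor being removed in $\widetilde R_{m-j}$.

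Third, to pass from "degree of the polynomial" to "degree of $\overline{\cP_j}$" I would show the defining polynomials are square-free and that the closures pick up no components in the lines $d=0$ or $e=0$ beyond those accounted for — or, cleaner, mimic the device in Lemma~\ref{lem:6}: use the degree-$7$ birational involution $\varphi$ interchanging the critical points to reduce the $\cQ$ statement to the $\cP$ statement, and use the $v$-coordinate parametrisation of the Type I component from Subsection~\ref{5.7} to get an independent count of $\overline{\cP_j}$ against a fixed auxiliary curve (the analogue of intersecting $\overline{\cX_n}$ with $\overline{\cY_1}$), forcing square-freeness. Here is where I expect the main obstacle: unlike $\cV_n$, which consists entirely of centers of hyperbolic components and is well understood, the curve $\cP_j$ contains the locus where $\omega_1$ is merely \emph{preperiodic} onto $\omega_2$, and a priori $\cP_j$ could be reducible or could meet the line at infinity or $d=0$ in an uncontrolled way; pinning down exactly these boundary contributions — and in particular verifying that $cP_j+2dQ_j$ (resp.\ $\widetilde R_{m-j}$ homogenized) has no repeated factors — is the delicate point, and I anticipate it requires the explicit asymptotic analysis near $d=0$ in the style of Lemma~\ref{lem:8} together with the Type I parametrisation, rather than a purely formal induction.
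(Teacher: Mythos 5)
Your proposal is correct and follows essentially the paper's route: the paper likewise writes $\cP_j$ and $\cQ_{m-j}$ as the zero sets of $cP_j+2dQ_j$ and $R_{m-j}$, gets the degrees by induction from the Section~\ref{2.2} recursions, transfers square-freeness from the $\cP$-side to the $\cQ$-side via the birational involution $\varphi$, and then verifies square-freeness of $cP_j+2dQ_j$ by a Bezout count against the auxiliary curve $L_r$ of maps with an attracting fixed point of multiplier $r\in(0,1)$, where the $v$-coordinate of the Type~I component gives exactly $2^{j-1}$ transverse points of $\cP_j\cap L_r$ in $\cR$, and the remaining intersections at $[0:0:1]$ and (for $j$ odd) $[0:1:0]$ are computed directly from the cubic equation of $L_r$. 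One small correction: there is no need to introduce a reduced numerator $\widetilde R_{m-j}$ --- the paper takes $R_{m-j}$ itself as the defining polynomial of $\cQ_{m-j}$, so no "degree drop from removed common factors" enters.
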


\begin{proof}
  We write $(cP_j+2dQ_j)^{h}$ for the homogenised version of $cP_j+2dQ_j$. Similarly,
$R_{m-j}^{h}$ is the homogenised version of $R_{m-j}$ where $P_j,Q_j,R_{m-j}$ are
as in Section~\ref{2.2}. Observe that $\overline{\cP_j}$ and $\overline{\cQ_{m-j}}$ are the varieties of $(cP_j+2dQ_j)^{h}$ and $R_{m-j}^{h}$, respectively.
 Using parts (i) and (iv) of Lemma~\ref{lem:4} in Section~\ref{2.2} we obtain the first line below, and part (vi) of the same lemma implies the second:
\begin{eqnarray*}
  \deg ((cP_j+2dQ_j)^{h}) &=&\dfrac{1}{6}(2^{j}-(-1)^{j})+\dfrac{1}{2},\\
  \deg (R_{m-j}^{h}) &=& \dfrac{7}{6}2^{m-j}-\dfrac{1}{6}(-1)^{m-j}-\dfrac{1}{2}.
\end{eqnarray*}
  Hence to establish the lemma we need to show that these polynomials are square free.
From the birational equivalence argument we just need to check that $cP_j+2dQ_j$ is square-free. 

Given $0<r<1$, let $L_r$ be 
the set formed by the maps $f_{c,d}$ in
$\cR$ that have an attracting fixed point with multiplier
$r$. From~\cite{R4},  the intersection of $\cP_j$ with $L_r$
is contained in $\cU$ and, as summarised above, it is in bijection
with the  points $z \in \D$ which map onto $c(r)$
under $j-1$ iterates of the map $\tau_r$. It follows that the cardinality of the intersection
$\cP_j \cap L_r$ is $2^{j-1}$, all of them transverse.

It is sufficient to show that the number of intersections of $\overline{\cP_j}$ and
$\overline{L_r}$ coincides with the product of the degrees of $cP_j+2dQ_j$ and
the polynomial equation of $L_r$, which we proceed to compute.

The curve $L_r$ is given by the equations
$$1+\dfrac{c}{z}+\dfrac{d}{z^2}=z,\ \ -\dfrac{c}{z^2}-2\dfrac{d}{z^3}=r.$$
These equations give 
$$z=\dfrac{1\pm \sqrt{1+c(2+r)}}{2+r}$$
Substituting for $z$, the curve $\zeta =r$ becomes the degree three curve in $c$ and $d$:
$$\dfrac{c^2r^2}{(2+r)^4}(8-c(2+r))-\dfrac{c}{(2+r)^4}(c(2+r)^2+4r)^2$$
$$+\dfrac{cr}{(2+r)^5}(c(2+r)^2+4r)(4-2c(2+r))+4d^2+\dfrac{4d}{(2+r)^3}(c(2+r)^2+4r)$$
$$+\dfrac{12dcr}{(2+r)^2}=0$$

The only term of degree three is $c^3$ and so there are no elements of $\overline{L_r}$ 
at $[c:0:1]$ for $c\neq 0$.  Since both the above equation of $L_r$ and
$cP_j+2dQ_j$ have linear terms but no constant terms (Lemma~\ref{lem:4} (iii)),  
$[0:0:1] \in \overline{\cP_j} \cap \overline{L_r}$ and the intersection number is $1$. 

Finally,  $[0:1:0]$
lies in the intersection only when $j$ is odd.
By Lemma~\ref{lem:4} (ii) and (iv), when $j$ is even, both $P_j$ and $Q_j$ have the same degree and both have powers of
$d$ of maximal degree, thus $[0:1:0] \notin \overline{\cP_j}$ if $j$ is even.
When $j$ is odd, $\deg(P_j) = \deg(Q_j) +1$, so $[0:1:0] \in \overline{\cP_j}$
and the intersection number at $[0:1:0]$ is also $1$.

Thus, the total intersection number 
between $\overline{\cP_j}$ and $\overline{L_r}$ is
$2^{j-1} + 1 + (1-(-1)^j)/2$ which is equal to $3 \deg (cP_j +2dQ_j)$.
Hence, $cP_j +2dQ_j$ is square free.
\end{proof}

\subsection{No intersections at $c=d=\infty$ and intersections at $c=0$.}

\begin{lemma}\label{5.3} We have the following.
\begin{itemize} 
\item $[1:s:0] \notin \overline{\cP_j} \cap \overline{\cQ_{m-j}}$ for all $s \in \C$.
\item $\overline{\cP_j}\bullet_{[0:0:1]} \V(d)=1$, where $\V(d)$ denotes the plane $d=0$,  and $\overline{\cP_j}\bullet_{[0:0:1]}\overline{\cQ_{m-j}}=2^{m-j-1}$ for all $1\leq j<m$,
\item $\overline{\cP_j}\bullet_{[0:1:0]}\overline{\cQ_{m-j}}=\dfrac{1}{6}(1-(-1)^{j})(2^{m-j}-(-1)^{m-j})$.
\end{itemize}
\end{lemma}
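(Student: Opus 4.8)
The plan is to analyze the three bullets of Lemma~\ref{5.3} using the by-now-familiar combination of the parametrisations by $f_{c,d}$ and the analysis at the three coordinate vertices, following the pattern already established for the curves $\overline{\cX_n}$ and $\overline{\cY_m}$ in Lemmas~\ref{lem:7} and~\ref{lem:8} and the degree computation of Lemma~\ref{lem:1}. For the first bullet, I would argue exactly as in the proof of Lemma~\ref{lem:7}: since $[1:0:0]\notin\overline{\cP_j}$ and $[0:1:0]\notin\overline{\cQ_{m-j}}$ (the latter because the leading term of $R_{m-j}$ involves $c^{\deg}$ and $d^{\deg}$ with nonzero coefficients, so $R_{m-j}^h$ does not vanish at $[0:1:0]$), it suffices to treat $x=[1:s:0]$ with $s\neq0$. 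As $[c:d:1]\to x$, we have $f_{c,d}^2\to g_s(z)=1+z^2/(z+s)$ uniformly on compact subsets of $\C\setminus\{-s\}$, and $g_s$ has a parabolic fixed point at $\infty$ with one critical point having an infinite forward orbit staying in $\C\setminus\{-s\}$; hence for $[c:d:1]$ near $x$ the first $m+1$ iterates of $\omega_1$ (or of $\omega_2$) are pairwise distinct, so $[c:d:1]\notin\cP_j\cup\cQ_{m-j}$ (the relevant orbit relation $f^j(\omega_1)=\omega_2$ or $f^{m-j}(\omega_2)=\omega_1$ would force a coincidence among these iterates).

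For the second bullet, I would work in the chart near $[0:0:1]$, i.e.\ with $c,d$ small, equivalently homogenise in the variable $e$ and look at the local structure of the curves at the origin of the $(c,d)$-plane. Here $\overline{\cP_j}$ is cut out by $cP_j+2dQ_j$ and $\overline{\cQ_{m-j}}$ by $R_{m-j}$; since $P_j(0,0)=Q_j(0,0)=1$, the lowest-order term of $cP_j+2dQ_j$ is the linear form $c+2d$, so $\overline{\cP_j}$ is smooth at the origin with tangent line $c+2d=0$, giving $\overline{\cP_j}\cdot_{[0:0:1]}\{[c:0:1]\}=1$ at once (the line $d=0$ is not tangent). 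For $\overline{\cP_j}\cdot_{[0:0:1]}\overline{\cQ_{m-j}}$ I would compute the lowest-order term of $R_{m-j}(c,d)$; from $R_1=4d-c^2$, $S_1=d$ and the recursion $R_{k+1}=R_k^2+cR_kS_k+dS_k^2$, $S_{k+1}=R_k^2$, an induction shows that the initial form of $R_{m-j}$ is (up to a nonzero scalar) $d^{2^{m-j-1}}$, of multiplicity $2^{m-j-1}$, and moreover that the tangent cone of $\overline{\cQ_{m-j}}$ at the origin is the single line $d=0$ with multiplicity $2^{m-j-1}$; since this is distinct from the tangent line $c+2d=0$ of $\overline{\cP_j}$, the local intersection number is the product of the multiplicities, $1\cdot 2^{m-j-1}=2^{m-j-1}$. (As a sanity check, this matches the count $\cP_j\cap L_r$ of $2^{j-1}$ used in Lemma~\ref{lem:1}, specialised appropriately, and is consistent with Bezout bookkeeping.)

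For the third bullet I would pass to the chart near $[0:1:0]$, i.e.\ set $d=1$ and take $(c,e)$ small, where the curves are cut out by $(cP_j+2dQ_j)^h$ and $R_{m-j}^h$ dehomogenised in $d$. When $j$ is even, $\deg P_j=\deg Q_j$ and $d^{\deg P_j}$ appears with nonzero coefficient in both, so $[0:1:0]\notin\overline{\cP_j}$ and the intersection number is $0$, matching $\tfrac16(1-(-1)^j)(\cdots)=0$. When $j$ is odd, $\deg P_j=\deg Q_j+1$, so $[0:1:0]\in\overline{\cP_j}$; I would determine the local multiplicity of $\overline{\cP_j}$ at $[0:1:0]$ and the local multiplicity of $\overline{\cQ_{m-j}}$ there, check that their tangent cones share no common component, and conclude the intersection number is the product of multiplicities, which should come out to $\tfrac16(2^{m-j}-(-1)^{m-j})$ — note $[0:1:0]\in\overline{\cQ_{m-j}}$ always, and in fact $\tfrac16(2^{m-j}-(-1)^{m-j})$ should be exactly the order of vanishing of $R_{m-j}^h$ there, i.e.\ $\deg(R_{m-j})-(\text{number of top-degree monomials contributing})$, computed from the recursion. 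The main obstacle will be this last computation: reading off the precise initial forms (tangent cones) of $\overline{\cP_j}$ and $\overline{\cQ_{m-j}}$ at $[0:1:0]$ from the recursions for $P_j,Q_j,R_{m-j},S_{m-j}$, verifying that the tangent cones have no common line so that the intersection number is simply the product of the multiplicities, and matching the resulting multiplicity of $R_{m-j}^h$ to the closed form $\tfrac16(2^{m-j}-(-1)^{m-j})$ via a careful induction on the degrees and leading-term structure. Everything else reduces to the arguments already carried out earlier in Section~\ref{2}.
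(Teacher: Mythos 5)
Your overall strategy is essentially the paper's: reduce the first bullet to the argument of Lemma~\ref{lem:7}, and for the other two compute tangent cones at $[0:0:1]$ and $[0:1:0]$ and apply the product-of-multiplicities rule. The second bullet is correct and matches the paper exactly: the lowest-degree part of $R_{m-j}$ is a single monomial in $d$ of degree $2^{m-j-1}$, so the tangent cone of $\ocQ_{m-j}$ at $[0:0:1]$ is the line $d=0$ with multiplicity $2^{m-j-1}$, transverse to the tangent $c+2d=0$ of $\ocP_j$.

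There are, however, two concrete problems elsewhere. In the first bullet your parenthetical claim that $[0:1:0]\notin\ocQ_{m-j}$ because $R_{m-j}$ has a nonzero $d^{\deg R_{m-j}}$ coefficient is false: already $R_1=4d-c^2$ has degree $2$ with no $d^2$ term, and $R_2$ (degree $4$) has no $d^4$ term, so $R_{m-j}^h$ does vanish at $[0:1:0]$ in general --- indeed you contradict yourself in the third bullet when you write ``$[0:1:0]\in\ocQ_{m-j}$ always''. Luckily the claim is not needed: $[0:1:0]$ is not of the form $[1:s:0]$, so the reduction to $s\neq 0$ only requires $[1:0:0]\notin\ocP_j$.

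In the third bullet, your expected answer for odd $j$ is off by a factor of $2$: the statement gives $\tfrac16(1-(-1)^j)(2^{m-j}-(-1)^{m-j})=\tfrac13(2^{m-j}-(-1)^{m-j})$ when $j$ is odd, whereas you wrote $\tfrac16(2^{m-j}-(-1)^{m-j})$, evidently dropping the factor $1-(-1)^j=2$. Also, the order of vanishing of $R_{m-j}^h$ at $[0:1:0]$ is $\deg R_{m-j}$ minus the largest power of $d$ occurring in any monomial of $R_{m-j}$, not ``the number of top-degree monomials contributing''; an induction using the recursion (the paper does this by writing $R_k=R_k^1(d)+cR_k^2(c,d)$ and showing that the largest $d$-power always occurs in $R_k^1$, with $\deg R_{m-j}^1=\tfrac56\cdot 2^{m-j}-\tfrac12+\tfrac16(-1)^{m-j}$) gives the multiplicity $\tfrac13(2^{m-j}-(-1)^{m-j})$, and, since $\ocP_j$ is smooth there with tangent line $\alpha c+\beta e$ ($\alpha,\beta\neq 0$) while the tangent cone of $\ocQ_{m-j}$ is the line $e=0$, this equals the local intersection number. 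So your outline would go through once the target value and the degree bookkeeping are corrected, but as written the numerology is wrong and the key induction is not set up.
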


\begin{proof} 
\noindent{\em{Intersections at $[1:s:0]$.}} 
For $s \neq 0$ this follows directly from Lemma~\ref{lem:7}, since $\overline{\cP_j} \cap \overline{\cQ_{m-j}} \subset \overline{\cX_m} \cap \overline{\cY_{m}}$. 
There are no intersections at $[1:0:0]$ either, because $cP_j+2dQ_j$  always has a term $c^k$ of maximal degree by Lemma~\ref{lem:4} (iv).

\medskip
\noindent {\em{Intersections at $[0:0:1]$.}}
By (v) of Lemma~\ref{lem:4},  the minimum degree $\mindeg (R_{m-j})$ of the monomials  in $R_{m-j}(c,d)$ is $2^{m-j-1}$ and  this is uniquely realised by a monomial in $d$ for all 
$1\leq j<m$.
By (iii) of Lemma~\ref{lem:4}, the constant term of both $P_j$ and $Q_j$ is $1$  for all $j\geq 3$. Hence $cP_j+2dQ_j$ has non-zero linear terms in $c$ and $d$ and the claim follows for all $j\geq 3$. Since $\cP_1 = \{c=0 \}$ and $\cP_2 = \{ c+2d =0\}$, the claim also follows for $j=1,2$.

\medskip 
\noindent{\em{Intersections at $[0:1:0]$.}}
There are no intersections if $j$ is even, because, by (ii) of Lemma~\ref{lem:4}, both $P_j$ and $Q_j$ have the same degree, and, by (iv) of Lemma~\ref{lem:4}  both have powers of $d$ of maximal degree. So the degree of $cP_{j}+2dQ_j$ is realised by a power of $d$ in $2dQ_j$. But $\deg(P_j)=\deg(Q_j)+1$ for all  odd $j\geq 3$ by (ii) of Lemma~\ref{lem:4}, and hence the degree of intersection of $(cP_j+2dQ_j)^h=0$ with $c=0$ is $1$ for all odd $j\geq 3$ (using, again, that linear terms in $c$ and $d$ are non-zero). The degree of the intersection of $(cP_j+2dQ_j)^h$ with $R_{m-j}^h$ is then given by $\deg (R_{m-j})-\deg (R_{m-j}^1)$, where we write
$$R_k(c,d)=R_k^1(d)+cR_k^2(c,d),\ \ S_k(c,d)=S_k^1(d)+cS_k^2(c,d).$$
Inductively we see that
$$R_1^1=S_1^1=4d,$$
$$R_{k+1}^1=(R_k^1)^2+d(S_k^1)^2,\ \ S_{k+1}^1=(R_k^1)^2,$$
and hence for all $k\geq 1$,
$$\deg (R_{2k+1}^1)=2\deg (R_{2k}^1)=1+4\deg (R_{2k-1}^1).$$
It follows that
$$\deg (R_{m-j}^{1})=\frac{5}{6}2^{m-j}-\frac{1}{2}+\frac{1}{6}(-1)^{m-j},$$
and hence
$$\deg (R_{m-j})-\deg (R_{m-j}^1)=\frac{1}{3}(2^{m-j}-(-1)^{m-j}).$$
\end{proof}

\subsection{Preliminaries on intersections at $[c:0:1]$.}\label{5.5}

\begin{lemma}\label{5.4}
If $[c:0:1]\in \ocP_j \cup \ocQ_j$, then  $c=0$, or $c=c_{p,q}$ for some $q$ with   $3\le q\le j$ and   $p$ with $\gcd(p,q)=1$, where  $c_{p,q}$ as in Proposition~\ref{3.3}, that is, $c_{p,q}^{-1}=-4\cos ^2(\pi p/q)$.
\end{lemma}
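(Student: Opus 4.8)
The plan is to follow the same strategy used in Lemma~\ref{lem:8}, but now for the curves $\ocP_j$ and $\ocQ_{m-j}$. By Lemma~\ref{lem:7} (and the remark in the proof of Lemma~\ref{5.3} ruling out $[1:0:0]$), any point of the form $[c:0:1]$ in $\ocP_j$ or $\ocQ_{m-j}$ with $c\neq 0$ is a genuine limit inside $\cR$ of points $(c_k,d_k)$ with $d_k\to 0$ and $c_k\to c_0\neq 0$. So first I would fix such a sequence and recall from the computation in Lemma~\ref{lem:8} that, uniformly on compact subsets of $\C^*$, the maps $f_{c_k,d_k}$ converge to $M_{c_0}(z)=1+c_0/z$, and that the cross-ratio $[\omega_1,\omega_2,f_{c,d}(\omega_1),f_{c,d}(\omega_2)]=c^3/(8d^2)-c/(2d)$ blows up, so $[f_{c_k,d_k}]$ leaves every compact subset of moduli space.

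Next I would use the dynamical meaning of the curves $\cP_j$ and $\cQ_{m-j}$: on $\cP_j$ one has $f_{c,d}^{\,j}(\omega_1)=\omega_2$, and on $\cQ_{m-j}$ one has $f_{c,d}^{\,m-j}(\omega_2)=\omega_1$. On either curve, composing these relations shows that $\omega_1$ (equivalently $\omega_2$) is periodic of some period $q$ dividing $m$ for the maps $f_{c_k,d_k}$ — more precisely, a point of $\ocP_j\cap\ocQ_{m-j}$ lies in $\ocX_m\cap\ocY_m$, but for the present lemma it is enough to know that on $\cP_j$ alone the forward orbit of $\omega_1$ eventually returns, and similarly for $\cQ_{m-j}$, so that the limiting configuration forces a cycle. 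In the limit, the critical point that stays in $\C^*$ has a forward orbit landing in $\C^*$, hence on a periodic cycle of $M_{c_0}$, so $M_{c_0}$ has a finite (necessarily neutral, since $M_{c_0}$ has a parabolic or attracting structure only at $\infty$) periodic cycle in $\C^*$. Exactly as in Lemma~\ref{lem:8}, $M_{c_0}$ is a degree-two map with a fixed point of multiplier $\to\infty$ at the relevant end and its two other fixed points in $\C^*$ with multipliers $\exp(\pm 2\pi i p/q)$, $(p,q)=1$; evaluating the multiplier of a fixed point of $M_{c_0}$ gives $c_0^{-1}=-4\cos^2(\pi p/q)$, forcing $q\neq 2$, hence $q\geq 3$.

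The only genuinely new bookkeeping is to check that $q\mid m$ and $q\leq \min\{n,m\}$ is not needed here — the statement as written only asserts $c=c_{p,q}$ with $q\geq 3$ and $(p,q)=1$ — so I would not pursue the divisibility refinement, which is handled later where the intersection numbers are computed. The main (mild) obstacle is making precise that a limit point of $\cP_j$ (rather than of a periodic curve $\cV_n$) still forces a periodic cycle of $M_{c_0}$ in $\C^*$: one must argue that the $j$-th iterate relation $f_{c,d}^{\,j}(\omega_1)=\omega_2$ together with $f_{c,d}(\omega_2)$ eventually hitting $\infty$-adjacent behaviour cannot persist in the limit unless the orbit of the surviving critical point is trapped in a cycle of $M_{c_0}$; this is the same normal-families argument as in Lemma~\ref{lem:8}, using that one of the two critical orbits of the limit map must escape to the parabolic point at $\infty$ while the other is forced by the defining equations of $\cP_j$ or $\cQ_{m-j}$ to remain in $\C^*$. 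Once that is in place, the conclusion about $c_{p,q}$ is immediate from the multiplier computation, and the lemma follows.
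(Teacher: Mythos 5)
Your overall plan — take a sequence $(c_k,d_k)\in\cP_j$ with $d_k\to 0$, $c_k\to c_0\neq 0$, pass to the M\"obius limit $M_{c_0}(z)=1+c_0/z$, and pin down $c_0$ by forcing $M_{c_0}$ to have finite order — is the right one, and the conclusion you state is correct. But the central step, which you yourself flag as ``the main (mild) obstacle'', is where the proof actually lives, and your account of it is wrong in several places. You describe the limit as a degree-two map ``with a fixed point of multiplier $\to\infty$'' and ``a parabolic or attracting structure only at $\infty$'', and refer to ``the critical point that stays in $\C^*$'' and to ``critical orbits of the limit map escaping to the parabolic point at $\infty$''. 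None of this applies at $d=0$: the limit $M_{c_0}$ is degree one with no critical points, $M_{c_0}(\infty)=1\neq\infty$ so $\infty$ is not even fixed, and both critical points $0$ and $-2d_k/c_k$ of $f_{c_k,d_k}$ converge to $0\notin\C^*$. (The parabolic picture at $\infty$ belongs to the line $[c:d:0]$ in Lemma~\ref{lem:7}, not here.) Likewise, the appeal to \cite[Lemma~4.1]{M2} as used in Lemma~\ref{lem:8} does not carry over: that step feeds the periodicity of $\omega_1$ on $\cX_n$ into Milnor's lemma, whereas on $\cP_j$ the relation $f^j(\omega_1)=\omega_2$ with $\omega_2\neq\omega_1$ gives no periodic critical orbit; your claim that ``on $\cP_j$ alone the forward orbit of $\omega_1$ eventually returns'' is false for $d\neq 0$.

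What closes the gap, and what the paper actually does, is a direct orbit-tracking argument that bypasses Milnor's lemma entirely. Since $f_{c_k,d_k}(0)=\infty$ and $f_{c_k,d_k}(\infty)=1$ for all $k$, and $f_{c_k,d_k}\to M_{c_0}$ uniformly on the sphere away from $0$, one has inductively $f_{c_k,d_k}^i(0)\to M_{c_0}^i(0)$ so long as the orbit of $0$ under $M_{c_0}$ avoids $0$. The defining relation of $\cP_j$ gives $f_{c_k,d_k}^j(0)=-2d_k/c_k\to 0$, so that induction must break: there is a least $k$ with $M_{c_0}^k(0)=0$, and $3\leq k\leq j$ since $M_{c_0}(0)=\infty$, $M_{c_0}^2(0)=1$. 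A M\"obius map with a periodic non-fixed point of period $k\geq 3$ is elliptic of order $q=k$, and the multiplier computation then gives $c_0^{-1}=-4\cos^2(\pi p/q)$. This is the content of the paper's short proof; it is both simpler than and different from the multiplier-convergence argument you tried to import from Lemma~\ref{lem:8}.
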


\begin{proof}
If $f_{c,0}\in \overline{\cP_j}$ for $c\neq 0$, then it is the limit of maps $f_{c_{n},d_{n}}$ in $\cP_j$ for a sequence $(c_n,d_n)$. The critical points of $f_{c_{n},d_{n}}$ are $0$ and $-2d_n/c_n$, where $-2d_n/c_n\to 0$. The critical values are $\infty $ and $1-c_n/4d_n^2$, which converges to $\infty $, and hence $f_{c_n,d_n}^2(0)$ converges to $1$.   Since $f_{c_n,d_n}^j(0)=-2d_n/c_n$ and since  the maps $f_{c_{n},d_{n}}$ converge uniformly to $f_{c,0}$ outside any neighbourhood of $0$, restricting to a subsequence  there must be some least integer $3\leq k \leq j$ such that $f_{c_n,d_n}^k(0)\to 0$, and hence $f_{c,0}^k(0)=0$ and $f_{c,0}$ has order $k$.   So $c=c_{p,q}$ as claimed. The proof for $\ocQ_j$ is similar.
\end{proof}

 In order to compute intersection numbers,  
we use the family of maps $h_{\zeta ,\rho }$ introduced in Proposition~\ref{3.3}, and varieties $\cC_j$ and $\cD_{m-j}$ corresponding to M\"obius conjugates in the $h_{\zeta ,\rho}$ family of the maps $f_{c,d}$ in $\cP_j$ and $\cQ_{m-j}$. 
That is, $\cC_j$ consists of those parameters $(\zeta,\rho)$ such that
the critical point $\omega_1=1$ of $h_{\zeta,\rho}$ maps in $j$ iterates onto 
the critical point $\omega_2=1+\rho$. Similarly, $\cD_{m-j}$ consists of  those parameters $(\zeta,\rho)$ such that
the critical point $\omega_2=1+\rho$ of $h_{\zeta,\rho}$ maps in $j$ iterates onto 
the critical point $\omega_1=1$. 
As in Theorem~\ref{3.4} we may restrict our attention to study the intersections of $\ocC_j$ and $\ocD_{m-j}$ with the line $\rho=0$.
In particular, we will only be interested on parameters $(\zeta, \rho)$ where
$|\rho|$ is close to $0$.

 \begin{lemma}\label{5.6} 
 Let $\ocC_j$ and $\ocD _{m-j}$ be the varieties in the $(\zeta ,\rho )$ coordinates that correspond to $\ocP_j$ and $\ocQ_{m-j}$. Then, for $c_{p,q}^{-1}=-4\cos ^2(\pi p/q)$ and  $\kappa _{p,q}=e^{2\pi ip/q}$, we have the following.

\begin{enumerate}
\item
The following numbers coincide, where ${\bf c}_{p,q} =[c_{p,q}:0:1]$ and ${\bf k}_{p,q} =[\kappa _{p,q}:0:1]$:

$\ocP_j\bullet _{{\bf c}_{p,q}} \V(d)$,

$\ocQ_j\bullet  _{{\bf c}_{p,q}} \V(d)$, 

$\ocC_j\bullet _{{\bf k}_{p,q}} \V(\rho )$,

$\ocD_j\bullet _{{\bf k}_{p,q} } \V(\rho )$.

\item
$$\ocP_j\bullet _{{\bf c}_{p,q}}\ocQ_{m-j} \\ =(\ocP_j\bullet _{{\bf c}_{p,q}} \V(d)) \cdot  (\ocQ_{m-j} \bullet_{{\bf c}_{p,q}} \V(d)).
$$

\end{enumerate}
\end{lemma}

\begin{proof} 
For (i), note that the biholomorphism of the proof of Lemma~\ref{3.5} establishes
that the first and third number coincide, as well as the second and fourth.
Now $(\zeta, \rho) \mapsto (\zeta, -\rho/(1+\rho))$ interchanges $\cC_j$ and 
$\cD_j$. Hence, their intersection numbers at ${\bf k}_{p,q}$ coincide.

(ii) Once we have established (i),
this is proved very similarly to the absence of common tangent lines
at $\V(\rho )$ of  $\cA_m$ and $\cB_n$ in Theorem~\ref{3.4}. As there, we have 
$$\dfrac{h^q ( 1 + z \rho) -1}{\rho} \to a + z +\dfrac{1}{2(2z-1)}= g_a(z).$$
If $\Re a \leq 0$, then $\Re g^k_a(0)$ is a strictly decreasing sequence and  if $\Re a \geq 0$, then $\Re g^k_a(1)$ is a strictly increasing sequence. Therefore we must have $\zeta =\kappa _{p,q}(1+a_1 \rho +o(\rho )) $ with $\Re (a_1)>0$ on any branch of $\cC_j$ near $(\kappa ,0)$ and $\zeta =\kappa _{p,q}(1+a_2\rho +o(\rho ))$ with $\Re (a_2)<0$ on any branch of $\cD_{m-j}$ near $(\kappa _{p,q},0)$. The absence of common tangent lines follows, and (ii) is a consequence of this and the biholomorphism of the proof of Lemma~\ref{3.5}.
\end{proof}

\subsection{Intersection number at ${\bf k}_{p,q}$}
\begin{theorem}
  \label{thr:1}
Let $\kappa _{p,q}= \exp (2 \pi i p/q)$ and ${\bf k}_{p,q}=[\kappa _{p,q}:0:1]$ (as before).
Then  $$\ocC_j\bullet _{{\bf k }_{p,q}} \V (\rho )= \nu_q(j).$$
\end{theorem}

To compute $\cC_j\bullet _{{\bf k }_{p,q}} \V(\rho)$, we observe that there exists $\delta >0$, such that this number coincides 
with the cardinality of
$$\cC_j \cap  (\{ r\kappa \} \times \{ \rho\mid |\rho| \leq \delta \} )$$
for all $0<r<1$ sufficiently close to $1$. 
According to~\cite{R4}, the parameters in the above intersection correspond to maps in the subset $\cU$
of the Type I component introduced in  Section~\ref{5.7}.
To prove the theorem, we need 
to further understand the image of the above intersection under
the parametrisation described in Section~\ref{5.7}.
The description of this image is contained in the next lemma and proposition.

Recall that $\phi_\zeta : \D \to \C$ denotes the (normalised) K\"onigs coordinate
for $\tau_\zeta$.

\begin{lemma}
  \label{lem:2}
Consider the graph 
$$S_\kappa 
= \{ t \kappa^i \mid 1 \le i \le q, t \geq 0 \} \subset \C$$
Given $0 < r < 1$, let $\zeta = r \kappa$ and consider
$${\Gamma}= {\Gamma}_\zeta = \phi_\zeta^{-1} (S_\kappa) \subset \D.$$

Then the following statements hold:

\begin{itemize}
\item $\Gamma$ is connected, simply connected, and locally homeomorphic to a finite tree.
\item ${\Gamma} \setminus \{ 0 \}$ has exactly $q$ connected components.
\item Label by $\gamma^\zeta_1$ the connected component of ${\Gamma} \setminus \{ 0 \}$ containing $\tau_\zeta (c(\zeta))$.
Then $$\# \{ z \in \gamma^\zeta_1 \mid \tau^{n-1}_\zeta (z) = c(\zeta) \} = \nu_q(n).$$


\end{itemize}
\end{lemma}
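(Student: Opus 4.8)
The plan is to analyze the structure of $\Gamma=\phi_\zeta^{-1}(S_\kappa)$ by transporting the combinatorics of the ``star'' $S_\kappa$ through the Königs coordinate, and then to count preimages by a careful bookkeeping of how the dynamics $\tau_\zeta$ permutes and branches over the $q$ arms of $\Gamma$. First I would recall that $\phi_\zeta:\D\to\C$ is the Königs linearization, an analytic map with $\phi_\zeta(0)=0$, $\phi_\zeta\circ\tau_\zeta=\zeta\,\phi_\zeta$, and that $\phi_\zeta$ is a local isomorphism near $0$ but has critical points precisely at the points whose $\tau_\zeta$-orbit passes through the free critical point $c(\zeta)\in\D$; by normalization $\phi_\zeta(c(\zeta))=1$. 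The set $S_\kappa=\bigcup_{i=1}^q\{t\kappa^i:t\ge 0\}$ is a union of $q$ rays emanating from $0$ (note $\kappa^q=1$, so $\kappa^i$ runs over the $q$-th roots of unity times... more precisely over the distinct powers of $\kappa$, which since $(p,q)=1$ are exactly all $q$-th roots of unity). Multiplication by $\zeta=r\kappa$ maps $S_\kappa$ into itself: it rotates the ray in direction $\kappa^i$ to direction $\kappa^{i+1}$ and scales by $r<1$, hence $\zeta\cdot S_\kappa\subset S_\kappa$, which is exactly the condition making $\tau_\zeta^{-1}(\Gamma)\supset\Gamma$ and giving $\Gamma$ its forward-invariance-like structure under pullback.

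The first two bullets are topological. Since $\phi_\zeta$ is a proper-enough analytic map onto its image (an isomorphism near $0$, with isolated critical points), and $S_\kappa$ is a contractible tree with a single vertex of valence $q$ at the origin, $\Gamma=\phi_\zeta^{-1}(S_\kappa)$ is a closed subset of $\D$ containing $0$; I would argue connectedness by following arcs: any point of $\Gamma$ lies on some arc mapping to a ray of $S_\kappa$, and that ray is connected to $0$, so lifting the ray back (it may branch at critical points of $\phi_\zeta$, but each branch still limits to a $\phi_\zeta$-preimage of $0$, and near $0$ the only preimage is $0$ since $\phi_\zeta$ is injective there) shows every component of $\Gamma$ meets $0$. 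Simple connectivity and local finiteness as a tree follow because $S_\kappa$ is simply connected and $\phi_\zeta$ is a branched cover over its image with no local monodromy obstruction (the image is simply connected, being a subset of a disk-like neighborhood after restricting to the component we care about). That $\Gamma\setminus\{0\}$ has exactly $q$ components is the key local computation at $0$: $\phi_\zeta$ is a biholomorphism from a neighborhood of $0$ onto a neighborhood of $0$, so locally $\Gamma$ looks exactly like $S_\kappa$ near $0$, which has $q$ rays, hence $q$ local components; and since $\Gamma$ is a tree, each of these $q$ local arms extends to a distinct global component of $\Gamma\setminus\{0\}$ with no further subdivision near $0$.

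The heart of the lemma is the third bullet, and this is where I expect the real work. Let $\gamma_1=\gamma_1^\zeta$ be the arm containing $\tau_\zeta(c(\zeta))$ --- note $c(\zeta)$ itself has $\phi_\zeta(c(\zeta))=1$, which lies on the ray in direction $\kappa^q=1$, so $c(\zeta)$ is on one specific arm, and $\tau_\zeta(c(\zeta))$ has Königs value $\zeta=r\kappa$, lying on the next arm. I would count $\{z\in\gamma_1:\tau_\zeta^{j-1}(z)=c(\zeta)\}$ by pushing forward to $S_\kappa$: such $z$ satisfies $\zeta^{j-1}\phi_\zeta(z)=\phi_\zeta(c(\zeta))=1$, i.e.\ $\phi_\zeta(z)=\zeta^{-(j-1)}=r^{-(j-1)}\kappa^{-(j-1)}$, which is a single point on one specific ray of $S_\kappa$ (provided $r^{-(j-1)}\ge 0$, always true) --- BUT only if $r<1$ is chosen small enough that this point $\zeta^{-(j-1)}$ still lies in the image $\phi_\zeta(\D)$; for the intersection-number interpretation one takes $r\to 1$, and the relevant count stabilizes. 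So the question becomes: how many preimages under $\phi_\zeta|_{\gamma_1}$ does the point $\zeta^{-(j-1)}$ on $S_\kappa$ have, equivalently how many $z\in\gamma_1$ map to it. This is controlled by the branching of $\phi_\zeta$ along $\gamma_1$, i.e.\ by how many critical points of $\phi_\zeta$ (equivalently, iterated $\tau_\zeta$-preimages of $c(\zeta)$) lie on $\gamma_1$ and on the segments of the tree over the ray from $1$ to $\zeta^{-(j-1)}$. My plan is to set up a recursion: the arm $\gamma_1$ maps under $\tau_\zeta$ (which acts on Königs coordinates as multiplication by $\zeta$, rotating arms forward and contracting) and I'd track how the $j$-dependent count grows by a factor related to $2^q$ each time we go around the $q$-cycle of arms --- since each full loop of $q$ steps in the dynamics picks up exactly the $2^q-1$ (or so) new preimages of $c(\zeta)$ coming from the degree-$2$ branching, matching the closed-form $\nu_q'(j)=(2^{j-r}-\ldots)/(2^q-1)$ type formula. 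The main obstacle will be making this branching count precise: I must verify that exactly one critical point of $\phi_\zeta$ lands on $\gamma_1$ per ``period'' and in the right position, which amounts to showing that the free critical point's orbit structure (the external-argument combinatorics of the quadratic-like restriction $\tau_\zeta|_{\D}$, which is conjugate to a quadratic polynomial with connected Julia set when $|\zeta|$ close to $1$ on the $p/q$-limb) injects into the arms of $S_\kappa$ in the way dictated by $p/q$-rotation. I would invoke the Appendix's properties of $\nu_q'(j)$ to close the recursion, reducing the geometric count to the arithmetic identity proved there.
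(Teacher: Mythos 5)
Your treatment of the first two bullets is sound in outline and parallel in spirit to the paper's, though the paper's route is cleaner: it writes $\Gamma$ as the increasing union of the sets $\Gamma_n=\tau_\zeta^{-n}(\sigma_\zeta(S_\kappa))$ and observes that $\tau_\zeta\colon\Gamma_{n+1}\to\Gamma_n$ is a degree-two branched cover with a single critical point, from which the tree structure, the $q$ components of $\Gamma\setminus\{0\}$, and the identification of the arms $\gamma_1^\zeta,\dots,\gamma_q^\zeta$ (with $\gamma_j^\zeta\ni\tau_\zeta^j(c(\zeta))$ and $c(\zeta)\in\gamma_q^\zeta$) all fall out by induction on $n$. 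Your direct argument about $\phi_\zeta^{-1}(S_\kappa)$ ``looking like $S_\kappa$ near $0$'' plus absence of loops reaches the same place.

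The real gap is the third bullet, which is the heart of the lemma and which you only outline. Two specific problems. First, your pushforward reformulation is not obviously equivalent to what is to be proved: if $z\in\gamma_1^\zeta$ satisfies $\phi_\zeta(z)=\zeta^{-(j-1)}$, this only gives $\phi_\zeta(\tau_\zeta^{j-1}(z))=1$, and since $\phi_\zeta$ is far from injective on $\D$ you would need a separate argument (not supplied) that $\phi_\zeta^{-1}(1)=\{c(\zeta)\}$, i.e.\ that no spurious preimages are picked up. Second, and more importantly, you defer the actual count to a recursion you say you ``would set up,'' invoking the Appendix to close it. But the recursion is precisely the content of the bullet, and it requires a concrete dynamical input that you never state: $\tau_\zeta$ maps $\gamma_j^\zeta$ homeomorphically onto $\gamma_{j+1}^\zeta$ for $1\le j\le q-1$, and maps $\gamma_q^\zeta$ (the arm containing $c(\zeta)$) onto all of $\Gamma$, one-to-one onto $(\Gamma\setminus\gamma_1^\zeta)\cup\{\tau_\zeta(c(\zeta))\}$ and two-to-one onto $\gamma_1^\zeta\setminus\{\tau_\zeta(c(\zeta))\}$. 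This is what gives the recursion for $a_j(n):=\#(\tau_\zeta^{-n}(c(\zeta))\cap\gamma_j^\zeta)$, namely $a_j(n+1)=a_{j+1}(n)$ for $j<q$ and $a_q(n+1)=2a_1(n)+\sum_{1<j\le q}a_j(n)$, starting from $a_q(0)=1$, $a_j(0)=0$ otherwise; that this recursion solves to $a_1(j-1)=\nu_q'(j)$ is then Lemma~\ref{6.1} of the Appendix. Your remark that ``exactly one critical point of $\phi_\zeta$ lands on $\gamma_1$ per period'' is not the right picture --- the iterated preimages of $c(\zeta)$ proliferate on every arm, and the count on $\gamma_1^\zeta$ is governed by this branched-cover structure, not by a one-per-period rule. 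Without writing down these facts and the recursion, the proposal does not prove the formula.
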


\begin{proof}
  We can write $\Gamma $ as an increasing union of sets $\Gamma _n$ for $n\geq 0$, where  
$$\Gamma _n=\tau _{\zeta }^{-n}(\psi_\zeta (S_\kappa \cap \D )).$$  
For each $n$, $\tau _\zeta :\Gamma _{n+1}\to \Gamma _n$ is a degree
two branched cover with a single critical point. So, by induction on
$n$, each $\Gamma _n$ is a finite tree.  The only intersections
between $\overline{\Gamma _{n+1}\setminus \Gamma _n}$ and $\Gamma _n$
are at extreme points of $\Gamma _n$, again by induction.  It follows
that $\Gamma $ is a connected and locally finite tree. Also by
induction, $\Gamma _n\setminus \{ 0\} $ has $q$ components, and each
one of the $q$ components of $\Gamma _{n+1}\setminus \{ 0\} $ contains
one of the $q$ components of $\Gamma _n\setminus \{ 0\} $, which, in
turn, contains one of the points $\tau _\zeta ^j(0)$ for $1\leq j\leq
q$ . So taking the union of all of these, $\Gamma \setminus \{ 0\} $
also has $q$ components $\gamma _j^\zeta $, for $1\leq j\leq q$, where
$\gamma _j^\zeta $ contains $\tau _{\zeta}^j(c(\zeta ))$ --- and
$\gamma _q^\zeta $ also contains $c(\zeta )$.  Also, $\tau _{\zeta }$
maps $\gamma _j^\zeta $ homeomorphically onto $\gamma _{j+1}^\zeta $
if $1\leq j\leq q-1$, and maps $\gamma _q^\zeta $ onto $\Gamma $,
mapping one-to-one onto $(\Gamma \setminus \gamma _1^\zeta )\cup \{
\tau _\zeta (c(\zeta))\} $ and two-to-one onto $\gamma _1^\zeta
\setminus \{ \tau _\zeta (c(\zeta ))\} $. It follows that the number
$a_j(n)=\# (\tau _\zeta ^{-n}(c(\zeta))\cap \gamma^\zeta _j)$ satisfies
$$a_j(0)=\begin{cases}0{\rm{\ if\ }}j<q,\\
  1{\rm{\ if\ }}j=q,\end{cases}$$
$$a_j(n+1)=\begin{cases}a_{j+1}(n){\rm{\ if\ }}j<q,\\
2a_1(n)+\sum _{1<i\leq q}a_i(n){\rm{\ if\ }}j=q.\end{cases}$$

Let $s(n) = a_1(n) + \cdots +a_q(n)$. It follows that
$s(n+1) = 2 s(n)$. Thus $s(n) = 2^n$.
Moreover, given $k \ge 0$ and $0 \le r <q$, since $a_1(kq+r) = a_q((k-1)q+r+1)=a_1((k-1)q+r) + s((k-1)q+r)$,
we obtain that 
$$a_1 (kq+r) = a_1(r) + s(r) + \cdots +  s((k-1)q+r) = a_{1+r}(0)+s(r+q) + \cdots +  s((k-1)q+r)$$
$$=\begin{cases}\dfrac{2^{kq+r}-2^r}{2^q-1}&{\rm{\ if\ }}r<q-1,\\ 1+\dfrac{2^{kq+r}-2^r}{2^q-1}&{\rm{\ if\ }}r=q-1.\end{cases}$$
Setting $n= kq+r +1$ we obtain that $a_1(n-1) =\nu_q(n)$.

\end{proof}

\begin{proposition}
  \label{pro:1}
  Let $X$ be a branch of $\ocC_j$ at $(\kappa,0)$.
There 
exist $0< r_0 <1$ and $\delta >0$ such that
if $h = h_{r \kappa, \rho} \in X$ for some $r_0 < r < 1$ and $|\rho| < \delta$, then
$v(h) \in \gamma^{r\kappa}_1$. 
\end{proposition}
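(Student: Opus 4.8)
\textbf{Proof proposal for Proposition~\ref{pro:1}.}

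The plan is to set up the correspondence between parameters on a branch $X$ of $\ocC_j$ near $(\kappa,0)$ and the $v$-coordinate from Section~\ref{5.7}, and to track where the critical value $f(\omega_1)$ lands in the dynamical plane as $\rho\to 0$. First I would fix $r$ close to $1$ and think of the relevant maps $h_{r\kappa,\rho}$ as living in the Type I component (this is legitimate by Lemma~\ref{5.4}: the only $[c:0:1]$ in $\ocP_j\cup\ocQ_{m-j}$ with $c\ne 0$ are the $c_{p,q}$, and near those points, for $|\rho|$ small and $r<1$, the map has an attracting fixed point of multiplier close to $r\kappa$, so it is hyperbolic of Type I). Under the parametrisation $\cU\to\D^*\times\D$, $[f,\omega_1,\omega_2]\mapsto(\zeta(f),v(f))$, the fixed-point multiplier is $\zeta = \zeta(h_{r\kappa,\rho})\to r\kappa$ as $\rho\to 0$ (with $\rho$ playing, up to the biholomorphism of Lemma~\ref{3.5}, the role of the cross-ratio normalisation), and I want to identify $\lim_{\rho\to 0} v(h_{r\kappa,\rho})$, or at least pin it into the arc $\gamma^{r\kappa}_1$.

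The key dynamical input is the convergence already recorded in the remark after Theorem~\ref{3.4} and reused in Lemma~\ref{5.6}: on a branch $X$ of $\ocC_j$ we have $\zeta = \kappa(1 + a_1\rho + o(\rho))$ with $\Re a_1 > 0$, and uniformly on compacts of $\C\setminus\{1/2\}$,
$$\frac{h^q_{\zeta,\rho}(1 + z\rho) - 1}{\rho} \to g_{a_1}(z) = a_1 + z + \frac{1}{2(2z-1)}.$$
The condition defining $\cC_j$ is $h^{j}_{\zeta,\rho}(1) = 1 + \rho$, i.e. the first critical point maps in $j$ steps onto the second. Rescaling by $z = (w-1)/\rho$ turns $h_{\zeta,\rho}$ near $1$ into a perturbation of the parabolic-type map $g_{a_1}$, whose parabolic fixed point at $\infty$ corresponds to the attracting fixed point $z_0$ of $h_{\zeta,\rho}$ after blow-up. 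So I would argue: the orbit $\omega_1=1,\ h(1),\dots,h^{j}(1)=1+\rho=\omega_2$ is, in the rescaled coordinate, an orbit of $g_{a_1}$ of combinatorial length $j$ that returns near the second critical point; by the sign condition $\Re a_1>0$, the real part of the $g_{a_1}$-orbit of $1$ increases, so the whole finite orbit stays on a definite side of the parabolic petal, which is exactly the side that, under the K\"onigs/Douady-type linearising coordinate $\sigma_\zeta^{-1}\circ\phi_f$, corresponds to the component $\gamma^{r\kappa}_1$ of $\Gamma_\zeta\setminus\{0\}$ containing $\tau_\zeta(c(\zeta))$. Making this precise is where I would invoke the explicit description of the Type I parametrisation from \cite{R4} summarised in Section~\ref{5.7}: the $v$-coordinate is $v(h) = \sigma_\zeta^{-1}\circ\phi_h(h(\omega_1))$, and the point is that the $\rho\to 0$ limit of the rescaled orbit determines, via $\phi_h$, a point on $S_\kappa$ in the sector $\{t\kappa^i\}$ attached to $\tau_\zeta(c(\zeta))$, hence $v(h)\in\gamma^{r\kappa}_1$.

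The main obstacle, I expect, is the last step: relating the asymptotic ($\rho\to 0$) position of the finite orbit of the critical point — controlled only through the limiting map $g_{a_1}$ on compacts of $\C\setminus\{1/2\}$ — to the \emph{global} combinatorics of $\Gamma_\zeta = \phi_\zeta^{-1}(S_\kappa)$ inside $\D$, and in particular to which sector of $S_\kappa$ the relevant orbit segment lies in. This requires controlling the K\"onigs coordinate $\phi_h$ on a region that degenerates as $\rho\to 0$ (the basin pinches toward the parabolic configuration of $g_{a_1}$), and checking uniformity of the convergence along the whole orbit $1,h(1),\dots,h^j(1)$, not just at a single iterate; one must also ensure the orbit does not stray near the bad point $1/2$ (equivalently, near the other critical point in the rescaled picture) in an uncontrolled way. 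I would handle this by a compactness/continuity argument: the finitely many iterates depend continuously on $(\zeta,\rho)$ up to $\rho=0$ after rescaling, the limiting $g_{a_1}$-orbit of $1$ is a well-defined finite orbit in the attracting petal with strictly increasing real parts (by $\Re a_1>0$, as in the remark after Theorem~\ref{3.4}), and this petal maps under the linearising coordinate precisely into the sector of $S_\kappa$ adjacent to $\tau_\zeta(c(\zeta))$; then shrinking $r_0$ and $\delta$ absorbs the $o(1)$ errors. The branch-independence (the statement is for an arbitrary branch $X$) follows because the sign of $\Re a_1$ is the same on every branch of $\ocC_j$ by Lemma~\ref{5.6}(2).
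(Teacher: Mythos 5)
Your approach via the parabolic rescaling is plausible in outline, but it leaves a genuine gap — and one you already flag as ``the main obstacle.'' The paper does not close that gap by arguing with the petal of $g_{a_1}$; instead it proves two quantitative closeness lemmas (Lemma~\ref{lem:3} and the unlabelled lemma following it) showing that the K\"onigs coordinate $\phi_h$ and its inverse $\psi_h$ are $\varepsilon$-close to the identity on the relevant regions, uniformly as $r\to 1$ along the branch. With that in hand, the proof of Proposition~\ref{pro:1} is a sector argument: one introduces disjoint angular sectors $S_i$ around the roots of unity $\kappa^i$, shows $\psi_h(]0,1]\zeta^{N'+i})\subset S_i$, and then uses the uniform convergence of $h^M$ to multiplication by $\kappa^M$ off a neighbourhood of the roots of unity to pin the whole (locally finite) tree $\Gamma'_h$ into $\bar S_0\cup\cdots\cup\bar S_{q-1}$; since $h(\omega_1)\in S_1$, the statement follows. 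This is a controlled passage from local data to the global combinatorics of $\Gamma_\zeta$, which is precisely what your rescaling picture does not by itself provide: the limit $g_{a_1}$ only describes the dynamics in an $O(\rho)$-window around the point $1$, whereas $\gamma^{r\kappa}_1$ is a global object in $\D$ whose shape you would still have to control.

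Two smaller but real inaccuracies in your write-up. First, the rescaling $z=(w-1)/\rho$ converts $h^q$ (the $q$-th iterate), not $h$, into a perturbation of $g_{a_1}$; so the $j$-step $h$-orbit of $\omega_1$ does not become ``an orbit of $g_{a_1}$ of combinatorial length $j$,'' but rather roughly $j/q$ steps of $g_{a_1}$ interspersed with near-rotations by $\kappa$. Second, $\omega_1=1$ rescales to $z=0$, not $z=1$, so the monotonicity you want to invoke is $\Re g_{a}^k$ along the orbit of $0$; the fact recorded in the remark after Theorem~\ref{3.4} is that $\Re g_a^k(0)$ is strictly decreasing when $\Re a\leq 0$ and $\Re g_a^k(1)$ is strictly increasing when $\Re a\geq 0$ — these two one-sided statements are what force $\Re a_1>0$ on $\cC_j$ (and $\Re a_2<0$ on $\cD_{m-j}$), but neither one directly says that the finite orbit of $0$ on a branch of $\cC_j$ has monotone real part, which is what your petal argument would need.
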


We assume this proposition and defer, for a moment, its lengthy proof.  
 
\begin{proof}[of Theorem~\ref{thr:1}]
From the previous proposition and lemma we obtain that
$$\ocC_j \bullet_{{\bf k}_{p,q}}\V(\rho )\le \nu_q(j).$$

Applying Bezout's Theorem we obtain the first line below.
From Lemma~\ref{5.4} we obtain the second identity. The third line is obtained putting together the inequality above with Lemma~\ref{5.6} and Lemma~\ref{5.3}. 
The fourth line follows since the number of hyperbolic components of period
dividing $j$ in the $p/q$ limb is $\nu_q(j)$ (Lemma~\ref{lem:9}).
Then the fifth line is deduced using the fact that $\eta_{IV}(1,j) = 2^{j-1}$ of
Lemma~\ref{lem:9}. After rearranging the terms in the sixth line, the last identity is obtained from Lemma~\ref{lem:1}.

\begin{eqnarray*}
\deg \ocP_j & = &  \ocP_j \bullet \V(d)\\ 
& = & \ocP_j \bullet_{[0:0:1]}\V(d)+ \sum_{{\bf c}_{p,q}, 3 \le q \le j} \ocP_j \bullet_{{\bf c}_{p,q}} \V(d) \\
& \le & 1 + \dfrac{1}{2}\sum_{3 \le q \le  j} \phi(q) \nu_q(j) = 1+ \dfrac{1}{2}\sum_{2 \le q \le j} \phi(q) \nu_q(j) -
\dfrac{\nu_2(j)}{2}\\
& = & 1+ \dfrac12( \eta_{IV}(1,j) -1) - \dfrac12 \left( \dfrac{2^{j-1} + (-1)^j}3 \right)\\ 
& = & \dfrac12( 2^{j-1} - \dfrac{2^{j-1} + (-1)^j}3) + \dfrac12\\
& = &  \dfrac{1}{6}(2^{j}-(-1)^j) + \dfrac12 \\
& = & \deg \ocP_j.
\end{eqnarray*}
Thus, equality holds throughout. In particular, by Lemma~\ref{5.6},
$$\nu_q(j)= \ocP_j\bullet _{{\bf c}_{p,q}} \V (d) = \ocC_j \bullet_{{\bf k}_{p,q}}\V(\rho ).$$
\end{proof}

\subsubsection{Proof of Proposition~\ref{pro:1}}
The proof of the proposition relies on the two lemmas
below which loosely speaking say that
the K\"onigs coordinate $\phi_h$ and its inverse $\psi_h$ 
are close  to the identity in $\D$, 
as $r \to 1$, since $h$ converges to the identity in $\D$.

\begin{lemma}
  \label{lem:3}
  Let $X$ be a branch of $\ocC_j$ at $(\kappa,0)$.
Given $\varepsilon >0$, there 
exists $0< r_0 <1$, $C >0$ and $N \in \N$ such that
if $h = h_{r \kappa, \rho} \in X$ 
and $r_0 < r <1$,  then
$$|\phi_h (z) - z | < \varepsilon |z|$$
for all $|z| \leq 1- C(1-r)$ and,
$$|h^{N} (1+\rho) | \leq 1 - C(1-r).$$
\end{lemma}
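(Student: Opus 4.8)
The plan is to exploit the structure established in Section~\ref{5.7}: a map $h = h_{r\kappa,\rho}$ lying on a branch $X$ of $\ocC_j$ near $(\kappa,0)$ represents an element of $\cU$ in the Type~I component, with attracting fixed point $z_0$ of multiplier $\zeta = r\kappa$, and its K\"onigs coordinate $\phi_h$ is the normalized semiconjugacy to multiplication by $\zeta$. The key geometric input is that as $r \to 1$ (equivalently $|\zeta| \to 1$) and $\rho \to 0$ along $X$, the map $h$ converges uniformly on compact subsets of $\D$ to the identity (since $h_{\kappa,0}(z) = \kappa z$ restricted to its immediate basin, conjugated appropriately, degenerates; more precisely one should track that the Blaschke-type model $\tau_\zeta$ converges to rotation, and the conformal position of the critical point stabilizes). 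From this, $\phi_h$ — being canonically attached to $h$ — must converge to the identity as well, uniformly on compacts of $\D$. The quantitative statement $|\phi_h(z) - z| < \varepsilon|z|$ on the shrinking disk $|z| \le 1 - C(1-r)$ is the effective form of this convergence.

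First I would fix the model: recall $\tau_\zeta(z) = z(z+\zeta)/(1+\bar\zeta z)$, which for $|\zeta|<1$ is a degree-two Blaschke product with attracting fixed point $0$ of multiplier $\zeta$ and basin $\D$. The normalized K\"onigs map $\phi_\zeta$ satisfies $\zeta\phi_\zeta = \phi_\zeta\circ\tau_\zeta$, $\phi_\zeta(0)=0$, $\phi_\zeta(c(\zeta))=1$. As $|\zeta|\to 1$, $\tau_\zeta \to \mathrm{id}$ on compacts of $\D$, so $\phi_\zeta \to \mathrm{id}$ on compacts; the rate of convergence on $\{|z|\le 1-C(1-r)\}$ can be controlled because the hyperbolic distance from that disk to $\partial\D$ is $\sim \log(1/(C(1-r)))$, and the K\"onigs linearization is governed by how many iterates of $\tau_\zeta$ it takes to bring a point deep into the linearizing neighborhood — roughly $\log(1/|z|)/\log(1/|\zeta|) \sim (1-|z|)/(1-r)$ iterates, which stays bounded on the stated disk. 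The same estimates transfer to $\phi_h$ via the conformal conjugacy $\sigma_\zeta^{-1}\circ\phi_h$ between $h$ and $\tau_\zeta$ near $z_0$, using that $h$ is uniformly $C^1$-close to $\tau_\zeta$ (after the normalizing M\"obius change) on the relevant region because $\rho \to 0$ forces the two distinguished fixed points together in the model $h_{\zeta,\rho}$. The second assertion, $|h^N(1+\rho)| \le 1 - C(1-r)$, says the critical value $h(\omega_1) = h(1)$ — wait, rather the point $1+\rho = \omega_2$, which on $\cC_j$ is the $j$-th image of $\omega_1$ — after $N$ further iterates lands well inside $\D$ (in the normalized coordinate where the basin is $\D$); this follows since $\omega_2$ lies in the immediate basin and the orbit converges to $z_0=0$ geometrically at rate $\approx |\zeta|=r$, so after $N \sim \log(\text{const})/\log(1/r)$ steps — but one needs $N$ independent of $r$, which works because one only needs to enter the fixed disk $\{|z| \le 1 - C(1-r)\}$ whose radius also shrinks; a careful choice balances $N$ and $C$.

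The main obstacle I anticipate is making the convergence $\phi_h \to \mathrm{id}$ effective on the $r$-dependent shrinking disk rather than merely on fixed compacts: one must show the constant $C$ and the threshold $r_0$ can be chosen uniformly over the branch $X$, which requires a uniform (in $\rho$, along $X$) modulus-of-continuity / distortion estimate for the family $\{h_{r\kappa,\rho}\}$ near the two colliding fixed points. I would handle this by passing to the $h_{\zeta,\rho}$ normalization where $x_1=0$, $x_2=\infty$ are the distinguished fixed points, writing $h_{\zeta,\rho}(z) = \zeta z\, P_\rho(z)$ with $P_\rho(z) = 1 - \rho^2 z/(4(1+\rho)(1+\tfrac12\rho - z))$, and noting $P_\rho \to 1$ uniformly on compacts of $\C\setminus\{1/2\}$ as $\rho\to 0$ — combined with the Koenigs-coordinate estimate for the pure rotation-limit this gives the required uniform control. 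The final packaging is then a standard $3\varepsilon$-type argument: choose $N$ so the $\tau_\kappa$-orbit (rotation) of the limiting critical value enters a fixed small disk, choose $C$ so the perturbed orbit stays in the slightly-enlarged shrinking disk, and choose $r_0$ so all the perturbative errors are below $\varepsilon$.
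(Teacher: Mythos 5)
Your overall framing is in the same spirit as the paper's argument: you work in the $h_{\zeta,\rho}$ normalization, exploit $h_{\zeta,\rho}(z) = \zeta z P_\rho(z)$ with $P_\rho \to 1$, and use the K\"onigs limit $\phi_h(z) = \lim \zeta^{-n} h^n(z)$ to pass from a contraction estimate to the linearization estimate. That part of the plan, while only sketched, is essentially the route the paper takes (the paper makes it precise by bounding $|P_\rho(z)-1|\leq 4\varepsilon\Delta|z|$ on $|z|\leq 1-C\Delta$ and then telescoping $\zeta^{-n}h^n(z)$ with a convergent geometric series).

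The real gap is in the second assertion, $|h^N(1+\rho)|\leq 1-C(1-r)$. You argue that the orbit of $\omega_2 = 1+\rho$ ``converges to $z_0=0$ geometrically at rate $\approx|\zeta|=r$'', so a fixed $N$ balanced against $C$ suffices. This mechanism is wrong: the point $1+\rho$ lies at distance $O(\rho)$ from $\omega_1=1$ and from the pole $1+\rho/2$ of $P_\rho$, and $|P_\rho(1+\rho)-1|$ is of order $\rho$, not $\rho^2$. The orbit of $\omega_2$ therefore does not decay geometrically; it winds around near the unit circle (close to the points $\kappa^i$) for roughly $1/\Delta$ steps before spiraling inward. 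The fact that a fixed $N$ nevertheless works is genuinely delicate, and the paper's proof uses the parabolic rescaling
$$\dfrac{h^q(1+z\rho)-1}{\rho}\ \to\ g(z) = qa + z + \dfrac{1}{2(2z-1)},$$
together with the crucial structural fact that $\Re a > 0$ along a branch of $\ocC_j$ (this is exactly what Stimson's Theorem~\ref{3.4}(3) and the observation preceding Lemma~\ref{5.6} supply, and it is what distinguishes branches of $\ocC_j$ from those of $\ocD_{m-j}$). With $\Re a > 0$ the parabolic point of $g$ at $\infty$ attracts the orbit of $z=1$ in a direction nearly aligned with $a$; since $\rho\approx -\Delta/a$, one gets $|1+g^N(1)\rho| \leq 1 - (\cos\delta)|g^N(1)|\Delta$, and choosing $N$ so that $|g^N(1)|\cos\delta > C$ gives the claim. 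Your proposal never invokes $\Re a>0$, and without it the argument would apply equally to branches of $\ocD_{m-j}$, where the corresponding statement (starting from $1+\rho$) fails; so the omission is not cosmetic but goes to the heart of why the lemma is true.

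Secondary remark: the heuristic ``number of $\tau_\zeta$-iterates to reach the linearizing neighborhood is $\sim(1-|z|)/(1-r)$'' is not how the first estimate is actually obtained. The paper bounds the full infinite telescoping sum $\sum_n|\zeta^{-(n+1)}z_{n+1}-\zeta^{-n}z_n|$ by a convergent geometric series with ratio $(1-(1-4\varepsilon)\Delta)^2/(1-\Delta)<1$; a ``finite number of iterates'' heuristic does not by itself give a bound on the tail. This is a matter of making the estimate rigorous rather than a conceptual error, but as written the proposal does not close the argument there either.
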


\begin{proof}
Note that, there exists $a \in \C$ with
$\Re {a} > 0$  such that, if  $h_{\zeta, \rho} \in X$, then
$$\zeta = \kappa (1 + a \rho + o(\rho)),$$
as $\rho \to 0$. 
Assume that  $h_{r \kappa, \rho} \in X$ and $r <1$. Let $\Delta =1-r$. 
It follows that
$$\rho = - \dfrac{\Delta}{a} (1+o(\Delta))$$
as $\Delta \to 0$.

Given $\varepsilon >0$, take 
$$C \geq \dfrac{1}{2 \varepsilon |a|^2} + \dfrac{1}{|a|}.$$
Assume that
$$|z| \leq 1- C \Delta.$$
Then, for $\Delta$ sufficiently small,
$$|1 + \dfrac{\rho}{2} -z | \geq  -\dfrac{\Delta}{2|a|} (1+o(\Delta)) + C \Delta
\geq  \dfrac{\Delta}{2 \varepsilon |a|^2}.$$
Also recall that $h_{\zeta, \rho} (z) = \zeta z P_\rho(z)$ where
$$P_\rho(z) = 1 -\dfrac{\rho^2 z}{4(1+\rho)(1+\rho/2-z)}.$$
Thus, for $\Delta$ sufficiently small, 
$$|P_\rho(z) -1| \leq \dfrac{\left(2\Delta/|a|\right)^2}{2 \Delta/ (2 \varepsilon
 |a|^2)}    \leq 4 \varepsilon \Delta         |z|.$$
Hence, 
$$|h (z)| \leq (1 -\Delta) |z| (1+ |P_\rho(z) -1|)  \leq   (1 -\Delta) |z| (1 +4 \varepsilon \Delta ) \leq  (1 - (1-4 \varepsilon ) \Delta) |z|.$$
In particular, $|h(z)| \leq 1- C \Delta.$

Let $z_n = h^n(z)$ and recall that $\phi_h(z) = \lim \zeta^{-n} z_n$.

\begin{eqnarray*}
  \left|\dfrac{z_{n+1}}{\zeta^{n+1}} - \dfrac{z_n}{\zeta^n} \right| & = & (1-\Delta)^{-n} |z_n| \cdot |P_\rho(z_n) - 1| \\
 & \leq & (1-\Delta)^{-n} |z_n|^2 4 \varepsilon \Delta \\
 & \leq & (1-\Delta)^{-n} (1 - (1-4 \varepsilon ) \Delta)^{2n} |z_0| 4 \varepsilon \Delta.
\end{eqnarray*}
Hence,
$$|\zeta^{-(n+1)} z_{n+1} - z_0| \leq 4 \varepsilon \Delta \sum \left(\dfrac{(1 - (1-4 \varepsilon ) \Delta)^2}{1-\Delta} \right)^n |z_0|
= \dfrac{4 \varepsilon (1-\Delta)}{1 - 8 \varepsilon - (1- 4 \varepsilon)^2 \Delta}|z_0| \leq 8 \varepsilon|z_0|$$
for $\Delta$ sufficiently small and $\varepsilon$ small, say $\varepsilon <1/20$.
This proves the first assertion.

\smallskip
For the second assertion, recall that as $\rho \to 0$,
$$\dfrac{h^q(1 +z \rho) -1}{\rho} \to g(z) = qa + z + \dfrac{1}{2(2z-1)}$$
uniformly in compact subsets of $\C \setminus \{ 1/2 \}$.
Since $\infty$ is a parabolic fixed point of $g$ with attracting direction 
$(0, +\infty) \cdot a$, it follows that, given $\delta >0$, for $N$ large, $|g^N(1)|$ is large and 
$$|\arg g^N(1) - \arg a| < \delta.$$
 Since  
$\rho = -\Delta (1+ o(1))/a$,
we have that $$|1 + g^N(1) \rho| \leq 1 - (\cos \delta) |g^N(1)| \Delta.$$
Thus, taking $\delta >0$ small, $N$ sufficiently large, and $r$ sufficiently close to $1$,
we have that $(h^{qN}(1+\rho) -1)/\rho$ is sufficiently close to a sufficiently
large $ g^N(1)$ so that 
 $$|h^{qN}(1+\rho)| \leq 1 - C \Delta.$$
\end{proof}

\begin{lemma}
Let $X$ be a branch of $\ocC_j$ at $(\kappa,0)$.
Given $\varepsilon >0$, there 
exists $0< r_0 <1$ and $N' \in \N$ such that the following holds.
For all $\zeta = r \kappa$ with $r_0 < r <1$,  
and all  $h = h_{r \kappa, \rho} \in X$ 
if $$w \in \{t \zeta^n  \mid t \in ]0,1] \},$$
then
$$ |\psi_h(w) - w| < \varepsilon |w|,$$
for all $n \ge N'$.
\end{lemma}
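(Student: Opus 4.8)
The plan is to show that the inverse Königs coordinate $\psi_h = \sigma_\zeta^{-1}\circ\phi_h$... wait, let me reconsider what's being claimed and how to prove it.

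The plan is to obtain the statement for $\psi_h$ by inverting the two conclusions of Lemma~\ref{lem:3} and then spreading the resulting estimate through the linearizing coordinate by means of the conjugacy relation $\psi_h(\zeta w)=h(\psi_h(w))$. Throughout I abbreviate $\Delta=1-r$, and I apply Lemma~\ref{lem:3} with a small parameter $\varepsilon_0$ in place of its $\varepsilon$, to be fixed at the end so that $\varepsilon_0<1/4$ and $\frac{\varepsilon_0}{1-\varepsilon_0}<\varepsilon$; the constant $C$ and the integer $N$ are then those produced by Lemma~\ref{lem:3}, and $r_0$ will be shrunk as needed along the way.

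First I invert the first conclusion of Lemma~\ref{lem:3}. On $\overline{D}(0,1-C\Delta)$ one has $|\phi_h(z)-z|<\varepsilon_0|z|$, so on the circle $|z|=1-C\Delta$ one has $|\phi_h(z)-z|<|z-w|$ for every $|w|<R''$, where $R''=(1-C\Delta)(1-\varepsilon_0)$; a Rouch\'e / argument-principle count then shows that $\phi_h$ is injective on $D(0,1-C\Delta)$, that its image contains $D(0,R'')$, and hence that $\psi_h=\phi_h^{-1}$ is defined on $D(0,R'')$ with $|\psi_h(w)-w|<\frac{\varepsilon_0}{1-\varepsilon_0}|w|$ there. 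Since near $0$ this branch agrees with the inverse K\"onigs coordinate of the paper and both are holomorphic, they agree on $D(0,R'')$; in particular $\psi_h(\overline{D}(0,R''))\subseteq\overline{D}(0,1-C\Delta)$, using $R''/(1-\varepsilon_0)=1-C\Delta$.

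Next, for $w=t\zeta^{n}$ with $t\in]0,1]$ the conjugacy relation gives $\psi_h(w)=h^{n}(\psi_h(t))$. I claim it is enough to find a fixed $N'$ (depending on $\varepsilon$ and on $X$, not on $r$) with $h^{N'}$ mapping the compact arc $J:=\psi_h(]0,1])$ into $\overline{D}(0,1-C\Delta)$. Indeed, granting this and using the inequality $|h(z)|\leq(1-(1-4\varepsilon_0)\Delta)|z|$ on $\overline{D}(0,1-C\Delta)$ — established inside the proof of Lemma~\ref{lem:3}, and showing that disk is $h$-invariant — one gets $\psi_h(w)=h^{n}(\psi_h(t))\in\overline{D}(0,1-C\Delta)$ for all $n\geq N'$ and all $t\in]0,1]$; applying the first conclusion of Lemma~\ref{lem:3} at the point $\psi_h(w)$ then gives $|w-\psi_h(w)|<\varepsilon_0|\psi_h(w)|$, whence $|\psi_h(w)-w|<\frac{\varepsilon_0}{1-\varepsilon_0}|w|<\varepsilon|w|$, as required.

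It remains to prove $h^{N'}(J)\subseteq\overline{D}(0,1-C\Delta)$. Split $J$ at the subarc $\psi_h([0,R''])$, which lies in $\overline{D}(0,1-C\Delta)$ by the second paragraph and stays there under every iterate of $h$; what is left is an outer subarc $\psi_h(]R'',1])$ running out towards the boundary of the injectivity domain of $\phi_h$, i.e. towards the critical point $\omega_2=1+\rho$ and its orbit. Using that $\phi_h$ has a critical point at $\omega_2$ — so that $\psi_h$ has square-root branching there — together with a bound on $\phi_h''(\omega_2)$ as $\Delta\to0$, one shows this outer subarc shrinks with $\Delta$ onto $\omega_2$; one then shows that $h^{N'}$ carries such a shrinking neighbourhood of $\omega_2$ into $\overline{D}(0,1-C\Delta)$, which is the precise analogue, for a whole neighbourhood of $\omega_2$ rather than for $\omega_2$ alone, of the second conclusion of Lemma~\ref{lem:3}: it follows, exactly as there, from the locally uniform convergence $(h^{q}(1+z\rho)-1)/\rho\to g_a(z)$ and the fact that $\infty$ is a parabolic fixed point of $g_a$ with attracting direction proportional to $a$, so that the real parts along the relevant orbits are strictly monotone on an entire neighbourhood of the starting points; one takes $N'$ to be $N$ plus the fixed number of iterates this monotonicity forces. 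The main obstacle is exactly this last step: controlling $\psi_h$ uniformly near the boundary of the linearizing disk as $r\to1$, where, since $\zeta\to\kappa$, the linearization slows down and the blunt tools (Koebe distortion for the univalent $\psi_h$, the crude contraction of $h$) all degenerate — and the resolution, as in Lemma~\ref{lem:3}, is to replace $h$ near $\omega_2$ by its parabolic rescaled limit $g_a$ and read off the uniform escape from the dynamics of $g_a$.
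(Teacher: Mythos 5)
Your reduction is genuinely different from the paper's argument, but the pivotal step about the outer subarc contains a real gap. You write that $\psi_h(]R'',1])$, with $R''=(1-C\Delta)(1-\varepsilon_0)$, \emph{shrinks with $\Delta$ onto $\omega_2$}. This is false as stated: $R''\to 1-\varepsilon_0$ and (by your own first paragraph) $\psi_h(R'')$ is within a factor $(1\pm\varepsilon_0/(1-\varepsilon_0))$ of $R''$, while $\psi_h(1)=\omega_2\to 1$, so the Euclidean diameter of $\psi_h(]R'',1])$ stays bounded below by roughly $\varepsilon_0$, uniformly as $\Delta\to 0$. What has a chance of being true is that the part of $\psi_h(]R'',1])$ lying \emph{outside} $\overline{D}(0,1-C\Delta)$ shrinks to $\omega_2$ at scale $O(\Delta)=O(|\rho|)$ — but this is a separate and non-trivial claim, requiring control of $\phi_h''(\omega_2)$ (which blows up like $1/\Delta$ as $\Delta\to 0$, since $\phi_h'$ drops from $\approx 1$ on $|z|\leq 1-C\Delta$ to $0$ at $\omega_2$ over a distance of order $\Delta$), and it is exactly the step your sketch only gestures at. Without it, the parabolic rescaling $(h^q(1+z\rho)-1)/\rho\to g_a(z)$, which operates at scale $|\rho|$ around $\omega_2$, simply does not see the portion of the outer subarc at distance $\asymp\varepsilon_0$ from $\omega_2$; and although that portion is already inside $\overline{D}(0,1-C\Delta)$ for $\Delta$ small, your proof as written does not say so, and the burden is on you to show that nothing of the curve lies in the intermediate regime (outside the disc yet beyond the rescaling's reach). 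A second, smaller issue: the uniform escape you invoke for $g_a$ must be verified on a whole disc in the $z$-coordinate of size depending on $|a|$ and $C$, whereas the proof of Lemma~\ref{lem:3} only tracked the orbits of $z=0$ and $z=1$; that extension is true but needs to be said.

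For comparison, the paper's proof avoids the whole issue of producing a uniform escape time by working intrinsically in the hyperbolic metric of $\psi_h(\D)$. It first observes $\mathrm{dist}_\D(\zeta^n,\zeta^{n+q})<1$ for $n\geq N'$, hence the same bound holds between $\psi_h(\zeta^n)$ and $\psi_h(\zeta^{n+q})$ in $\psi_h(\D)$; it then uses that $\psi_h(\D)$ omits iterated preimages of the critical point accumulating near $\kappa^n$, so that the hyperbolic density is large there and the geodesic joining two such markers cannot leave a small Euclidean ball around $\kappa^n$; and it lets the escape time $m$ depend on $\Delta$, splitting the ray into a geodesic-controlled segment (indices $\leq n+qm$) and a deep segment (indices $>n+qm$) already inside $\{|w|\leq 1-\varepsilon/5\}$ where the first conclusion of Lemma~\ref{lem:3} applies directly. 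That route is insensitive to how long the critical orbit lingers near $\partial\D$, which is precisely the delicate point your fixed-$N'$ reduction has to confront head-on.
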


\begin{proof}
Let $\Delta =1-r$.
Consider $N$ and $C$ as in the previous lemma.
Denote by $\operatorname{dist_\D}$ the hyperbolic distance (with constant curvature $-1$) in the open unit disk. 
Observe that
$$\operatorname{dist_\D} (\zeta^n, \zeta^{n+q}) = \log \dfrac{1+r^n}{1-r^n} \dfrac{1-r^{n+q}}{1+r^{n+q}} \leq q\Delta + \log 2.$$

Hence we may choose $N' > N$  such that
$$\operatorname{dist_\D} (\zeta^n, \zeta^{n+q}) < 1$$
for all $n \geq N'$ and all $\Delta$ sufficiently small.

We may assume that $\Delta$ is sufficiently small
so that the $\varepsilon/2$ ball centred at $\kappa^n$, denoted
$B_{\varepsilon/2}(\kappa^n)$ is not contained in $\psi_h(\D)$. In fact,
since $h$ converges uniformly in compact subsets of $\C\setminus \{1\}$
to multiplication by $\kappa$, any such ball contains an iterated preimage 
of the critical point $\omega_2=1$. 

We now claim that for all $n \ge N'$,
 if $\psi_h(\zeta^n)$ and $\psi_h(\zeta^{n+q})$ 
lie in $B_{\varepsilon/2}(\kappa^n)$,
then the hyperbolic geodesic of $\psi_h(\D)$ joining these points
is contained in  $B_{\varepsilon}(\kappa^n)$.
In fact, recall that the (infinitesimal) hyperbolic arc length in $\psi_h(\D)$
is bounded 
below by $1/(2 \delta(z))$ where $\delta(z)$ is the distance from
$z$ to the boundary of $\psi_h(\D)$. 
Thus the geodesic from  $\psi_h(\zeta^n)$ to $\psi_h(\zeta^{n+q})$ 
may not exit $B_{\varepsilon}(\kappa^n)$, for otherwise, it would have length
at least $1/2$. 

By the previous lemma we may take $\Delta$ sufficiently small
so that the following two conditions are satisfied:

$\bullet$ For all $w$ such that $|w| \le 1 -\varepsilon/5$,
$$|\psi_h(w) -w | < \epsilon |w|.$$

$\bullet$ For all $z$ such that $|z| < 1 -C\Delta$ we have
$$|\phi_h (z) -z| < \varepsilon/8.$$

$\bullet$ For all $n$ such that  $N' \le n < N'+q$,
we may also assume that $|\zeta^n- \kappa^n| < \varepsilon/8$.

Recall that $\psi_h(\zeta^k) = h^k(\omega_1)$, for all $k \geq 1$. Equivalently,
$\phi_h(h^k(\omega_1)) = \zeta^k$. 
We may assume that $n$ is such that $N' \le n < N'+q$, it follows that,
$$|h^n(\omega_1) - \kappa^n | \leq |h^n(\omega_1) - \zeta^n| +|\zeta^n - \kappa^n | < \varepsilon/2.$$

Let $m \geq 1$ be such that 
$$|h^{n+qm}(\omega_1) - \kappa^n | \leq \varepsilon/2$$
and
$$|h^{n+q(m+1)}(\omega_1) - \kappa^n | > \varepsilon/2.$$
Note that this number $m$ depends on $\Delta$ (that is on $\zeta$).

It follows that
$$|\zeta^{n+q(m+1)} - \kappa^n | >  \varepsilon/2 - \varepsilon/8.$$
Taking $\Delta$ smaller if necessary, so that the second inequality
below holds we have:
$$|\zeta^{n+q(m+1)}-\zeta^{n+qm}| \leq |\zeta^q - 1| < \varepsilon/8.$$
Hence,
$$|\zeta^{n+qm} - \kappa^n | >  \varepsilon/2 - \varepsilon/8 - \varepsilon/8>\varepsilon/5.$$

If $w = s\zeta^{n}$, for some $s < r^{n+qm}$
 then $|w| \leq 1-\varepsilon/5$ and
therefore $|\psi_h(w) -w| \le \varepsilon |w|$.

Finally, if $w = s \zeta^n$ where  $r^{n+qj} \leq s \leq r^{n+q(j-1)}$
for some $1 \le j \le m$, then $\psi_h(w)$ lies in a geodesic joining
the points $h^{n+q(j-1)}(\omega_1)$ and $h^{n+qj}(\omega_1)$, which are
$\varepsilon/2$-close to $\kappa^n$. Thus, $\psi_h(w)$ is $\epsilon$-close 
to $\kappa^n$. For $\Delta$ sufficiently small, it follows that
$|\psi_h(w) -w| \le 3 \varepsilon |w|$.
\end{proof}

\begin{proof}[of Proposition~\ref{pro:1}]
We write $\gamma _i'$ for the image of $\gamma _i^\zeta $ under the conjugacy between $\tau _\zeta $ and $h$ extending $\phi _h^{-1}\circ \psi _\zeta $. We also write $\Gamma _h'$ for the union of  the $\gamma _i'$. It  is sufficient to show that $h(\omega_1) \in \gamma_1'$ for all
$r$ sufficiently close to $1$. 

Again we let $\Delta =1-r$ and let $\varepsilon >0$ be sufficiently
small so that the sectors 
$$S_i = \{ z \in \C \setminus \{0\} | |\arg z - \arg \kappa^i | < 3 \varepsilon \}$$
for $i=0,\dots, q-1$ (subscripts $\mod\ q$) are pairwise disjoint.

For $\Delta$ sufficiently small and $N'$ as in the previous lemma with $q|N'$, 
for all $i=0,\dots,q-1$, we have that
 $\psi_h(]0,1]\zeta^{N'+i}) \subset S_i$.
We will show that $\gamma'_i \subset S_i$. Since $h(\omega_1) \in S_1$
the proposition will follow.

Let $T_0 = \cup _{i\ge 0}\psi_h([0,1]\zeta^{N'+i})$ 
and note that $h^{M}(\Gamma'_h) \subset T_0$ for $M=N'+q+j$.

We claim that 
$$\Gamma'_h \subset S=\cup _{i=0}^{q-1}S_i,$$
from which it follows immediately that $\gamma _i'\subset S_i$ for all $0\le i\le q-1$. In order to prove this statement,
given $\delta>0$, let us denote by $V_\delta$ a $\delta$-open neighbourhood
of the roots of unity $1,\kappa,\dots,\kappa^{q-1}$. 
Since $h^M$ converges uniformly (spherical metric) 
to multiplication by $\kappa^M$ in $\bar{\C} \setminus V_\varepsilon$ as $\Delta \to 0$, we have that
for $\Delta$ sufficiently small, if $z \in   \bar{\C} \setminus V_\varepsilon$
and $h^M(z) \in T_0 \cap  V_{2 \varepsilon}$, then $z \in T_0$. 
We may also assume that if $z \in   \bar{\C} \setminus V_\varepsilon$ and
$h^M(z) \in V_{2 \varepsilon}$, then $z \in V_{3 \varepsilon}$.
Thus, given $z \in \Gamma'_h$, then $h^M(z) \in T_0$ and therefore,
$z \in V_{3 \varepsilon} \cup T_0\subset S$, as required.
\end{proof}

\subsection{Proof of Theorem~\ref{5.1}}
By Bezout's Theorem, and our study of intersections at infinity,
 we have that $\eta_{\rm II}(m,j)$ is obtained
by subtracting from the product of the degrees
of $\ocP_j$ and $\ocQ_{m-j}$ (first line below) the intersection numbers at
$[0:0:1]$ (second line), at $[0:1:0]$ (third line), and at all ${\bf c}_{p,q}$
(fourth line).  
\begin{equation*}
\begin{split}
\eta_{\rm II}(m,j) =&  \left(\dfrac{1}{6}(2^{j}-(-1)^{j})+\dfrac{1}{2} \right)
\left(\dfrac{1}{6}(7 \cdot 2^{m-j}- (-1)^{m-j})-\dfrac{1}{2}\right) \\
              -&  2^{m-j-1}\\
              -&  \dfrac{1}{6}(1-(-1)^{j})(2^{m-j}-(-1)^{m-j})\\
              -& \dfrac12 \sum_{3 \le q \leq j} \phi(q) \nu_q(j)\nu_q(m-j).
\end{split}
\end{equation*}   
Now a calculation shows that the formula above is equivalent to that stated
in the theorem. \hfill $\Box$

\newpage
\noindent
{Jan Kiwi \\ Facultad de Matem\'aticas, \\  Pontificia Universidad Cat\'olica de Chile.\\
\email{www.mat.puc.cl/\~{}jkiwi/}}

\bigskip
\noindent
{Mary Rees \\ Department of Mathematics, \\  University of Liverpool, \\ United Kingdom.\\
\email{www.liv.ac.uk/\~{}maryrees/maryrees.homepage.html/}}
\end{document}